\newtheorem{thm}{Theorem}[section]
\newtheorem*{thm*}{Theorem}
\newtheorem{lem}[thm]{Lemma}
\newtheorem{prop}[thm]{Proposition}
\newtheorem{cor}[thm]{Corollary}
\newtheorem{conj}[thm]{Conjecture}
\newtheorem{qu}[thm]{Question}
\theoremstyle{definition}
\newtheorem{Def}[thm]{Definition}
\crefname{equation}{equation}{equations}
\crefname{lem}{Lemma}{Lemmas}
\crefname{thm}{Theorem}{Theorems}
\newlist{lemenum}{enumerate}{1}
\setlist[lemenum]{label=(\alph*), ref=\thelem(\alph*)}
\DeclareMathOperator\pr{Pr}
\DeclareMathOperator\ext{ext}
\DeclareMathOperator\ex{ex}
\newcommand\dd{\mathrm{d}}
\newcommand\up[1]{^{(#1)}}
\newcommand\ab[1]{\lvert#1\rvert}
\newcommand{\flo}[1]{\lfloor #1 \rfloor}
\newcommand{\cei}[1]{\lceil #1 \rceil}
\newcommand\E{\mathbb{E}}
\newcommand\N{\mathbb{N}}
\newcommand\rhat{\hat r}
\title{Three early problems on size Ramsey numbers}
\author{David Conlon\thanks{Department of Mathematics, California Institute of Technology, Pasadena, CA 91125, USA. Email: {\tt dconlon\allowbreak{}@caltech.edu}. Research supported by NSF Award DMS-2054452.} \and Jacob Fox\thanks{Department of Mathematics, Stanford University, Stanford, CA 94305, USA. Email: {\tt jacobfox@stanford.edu}. Research supported by a Packard Fellowship and by NSF Awards DMS-1855635 and DMS-2154169.} \and Yuval Wigderson\thanks{School of Mathematics, Tel Aviv University, Tel Aviv 69978, Israel. Email: {\tt yuvalwig@tauex.tau.ac.il}. Research supported by NSF GRFP Grant DGE-1656518.}}
\date{}
\begin{document}
\maketitle
\begin{abstract}
	The size Ramsey number of a graph $H$ is defined as the minimum number of edges in a graph $G$ such that there is a monochromatic copy of $H$ in every two-coloring of $E(G)$. The size Ramsey number was introduced by Erd\H os, Faudree, Rousseau, and Schelp in 1978 and they ended their foundational paper by asking whether one can determine up to a constant factor the size Ramsey numbers of three families of graphs: complete bipartite graphs, book graphs (obtained by adding many common neighbors to the vertices of a clique), and starburst graphs (obtained by adding many pendant edges to each vertex of a clique). In this paper, we completely resolve the latter two questions and make substantial progress on the first by determining the size Ramsey number of $K_{s,t}$ up to a constant factor for all $t =\Omega(s\log s)$.
\end{abstract}

\section{Introduction}
Given two graphs $G$ and $H$, we say that \emph{$G$ is Ramsey for $H$} if every two-coloring of the edges of $G$ contains a monochromatic copy of $H$. Graph Ramsey theory is mainly concerned with determining which graphs $G$ are Ramsey for a given $H$. In particular, of central concern is
the \emph{Ramsey number} $r(H)$ of $H$, defined as the minimum number of vertices in a graph $G$ which is Ramsey for $H$.

In this paper, we study the \emph{size Ramsey number} $\rhat(H)$, defined as the minimum number of edges in a graph $G$ which is Ramsey for $H$. The size Ramsey number was introduced by Erd\H os, Faudree, Rousseau, and Schelp \cite{MR479691} in 1978. They proved several 
bounds on size Ramsey numbers, noting, for example, the basic inequality
\[\rhat(H) \leq \binom{r(H)}2\]
and presenting a proof, due to Chv\'atal, that this bound is tight when $H$ is a complete graph.
They ended their paper with four questions, asking for the asymptotic order of $\rhat(H)$ as $H$ ranges over four specific families of graphs. In this paper, we fully resolve two of these questions and make substantial progress on a third. The fourth question, about the size Ramsey number of paths, was resolved by Beck \cite{MR693028}, who proved the surprising result that $\rhat(P_n) = \Theta(n)$ for the path $P_n$ with $n$ vertices. This breakthrough inspired many of the subsequent developments in the field, such as the classic papers\ \cite{MR905153,haxell1995induced,MR1767025,MR2775894,MR1083592}
and the more recent results in~\cite{berger2019size, clemens2019size, clemens2021grid, CNT21, CNT23, draganic2020size, draganic_kolutovi, han2019size, han2020multicolour, kamcev2021size, letzter2021size}.

The first question asked by Erd\H os, Faudree, Rousseau, and Schelp \cite{MR479691} was about $\rhat(K_{s,t})$ for $s \leq t$. They proved the bounds
\[
	\Omega(st 2^s) \leq \rhat(K_{s,t}) \leq O(s^2 t 2^s),
\]
with the lower bound only holding for $t=\Omega(s^2)$. However, in a later paper \cite{MR1212883}, Erd\H os and Rousseau proved the lower bound $\rhat(K_{s,t})=\Omega(st2^s)$ for all  $s \leq t$.\footnote{They only state their result for $s=t$, but the proof carries through for all $s \leq t$. We present their proof, in this greater generality, in \cref{sec:complete-bipartite}.} More recently, Pikhurko \cite{MR1972078} found an asymptotic formula for $\rhat(K_{s,t})$ for all fixed $s$ and $t \to \infty$, which, in particular, implies that $\rhat(K_{s,t})=\Theta(s^2 t 2^s)$ for  $t$ sufficiently large in terms of $s$. However, Pikhurko's technique provides no quantitative estimate on how large $t$ must be for such a bound to hold.

Our first main result is an improved lower bound on $\rhat(K_{s,t})$ which holds for all $s \leq t$.
\begin{thm}\label{thm:kst-lb}
	For all $s \leq t$,
	\[
		\rhat(K_{s,t}) = \Omega \left(s^{2- \frac st} t 2^s\right).
	\]
\end{thm}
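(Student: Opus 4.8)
Recall that $G$ is Ramsey for $K_{s,t}$ precisely when $G$ cannot be written as the union of two $K_{s,t}$-free spanning subgraphs, so it suffices to show that if $e(G)$ is smaller than a suitable constant times $s^{2-s/t}t2^s$ then $E(G)$ admits a red/blue partition with both colour classes $K_{s,t}$-free. Write $m=e(G)$ and assume $G$ has no isolated vertices, so $|V(G)|\le 2m$. For an $s$-set $S\subseteq V(G)$ let $x_S=|N_G(S)|$; under a uniformly random colouring the number $d_{\mathrm{red}}(S)$ of common neighbours of $S$ joined to all of $S$ by red edges is distributed as $\bin(x_S,2^{-s})$, and there is a red $K_{s,t}$ with $s$-side $S$ exactly when $d_{\mathrm{red}}(S)\ge t$ (similarly for blue). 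A Chernoff bound gives $\Pr[d_{\mathrm{red}}(S)\ge t]\le\binom{x_S}{t}2^{-st}$, and summing over all $s$-sets is exactly the first moment of the number of monochromatic copies of $K_{s,t}$; since a graph with $m$ edges has at most about $\binom{m/s}{t}$ labelled copies of $K_{s,t}$ (attained by $K_{s,\,m/s}$), demanding this expectation to be $<1$ recovers only the Erd\H os--Rousseau bound $m=\Omega(st2^s)$.

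\textbf{The improvement.} The loss above is genuine: $K_{s,\,m/s}$ is not Ramsey, since recolouring the $s$ edges at each of its $m/s$ degree-$s$ vertices to be non-monochromatic kills every copy, and the extra factor $s^{1-s/t}$ should come from quantifying the analogous rigidity forced on a true Ramsey graph. I would fix a threshold $\tau\asymp t2^s$ and split the $s$-sets into \emph{light} ones ($x_S<\tau$) and \emph{heavy} ones ($x_S\ge\tau$). For the light $s$-sets the random-colouring/Chernoff estimate works once one controls $\sum_v\binom{d_G(v)}{s}$: bin the light $s$-sets by the dyadic size of $x_S$ and use that the number of $s$-sets with $x_S\ge x$ is at most $x^{-1}\sum_v\binom{d_G(v)}{s}$, so that the union bound becomes small when $m$ is small. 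For a heavy $S$ the random colouring fails, so one colours the edges between $S$ and $N_G(S)$ in a preliminary, partly deterministic step: leave all but $t-1$ common neighbours of $S$ free and colour the remaining bundles non-monochromatically, in a roughly balanced way so as not to bias the edge-colour distribution; this forces $d_{\mathrm{red}}(S),d_{\mathrm{blue}}(S)<t$. One then argues that a sparse $G$ has so few heavy $s$-sets that these prescriptions can be imposed consistently at every vertex while leaving enough fresh randomness on the other edges to handle the light $s$-sets. Optimising $\tau$ against the number of edges that the heavy $s$-sets must consume is what is meant to produce the exponent $2-s/t$.

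\textbf{Main obstacle.} The crux is exactly this heavy/light interaction. First, one must control the contribution of high-degree vertices: a priori $\sum_v\binom{d_G(v)}{s}$, hence the number of relevant $s$-sets, can be large even when $m$ is small, which likely necessitates a cleaning step reducing to (near-)bounded maximum degree. Second, the deterministic colouring around the heavy $s$-sets is global — every edge lies in two vertex-links — so one must show that the local "make the bundle at $w$ non-monochromatic" constraints coming from all heavy $s$-sets through a fixed vertex $w$ are simultaneously satisfiable (this looks like a Lovász-Local-Lemma problem on the $s$-uniform system of heavy $s$-sets inside $N_G(w)$, which needs that system to be sparse), while still not spoiling the light-$s$-set analysis; an alternative base colouring, induced from a random two-colouring of the \emph{vertices} so that an $s$-set becomes dangerous only with probability $2^{1-s}$ rather than $2^{1-st}$, is attractive here because heavy $s$-sets can afford that loss, but reconciling it with the light-$s$-set bound is itself delicate. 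I expect making these two requirements compatible to be the hardest part of the argument.
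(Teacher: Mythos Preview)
Your instinct is right --- the gain over Erd\H os--Rousseau comes from forcing each vertex's red/blue split among its high-degree neighbours to be balanced --- but the heavy/light $s$-set framing does not implement it, and the gap you yourself flag is real and not closed by anything in the proposal. There need not be few heavy $s$-sets: in a near-extremal $K_{A,B}$ with $|A|\asymp s^{2-s/t}$ and $|B|\asymp t2^s$, \emph{every} $s$-subset of $A$ is heavy, so the ``non-monochromatic bundle'' constraints at a fixed $w\in B$ involve all $\binom{|A|}{s}$ such sets at once; they cannot be imposed $S$-by-$S$, and the Local Lemma on the $s$-set system has hopeless dependency degree. What those constraints actually collapse to is a condition on the colouring \emph{at each vertex $w$} --- that few $s$-sets lie entirely in $w$'s red (or blue) neighbourhood among the high-degree vertices --- not a condition on individual $s$-sets. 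Your plan never reaches that reformulation, gives no mechanism for reconciling precoloured edges with the light-side Chernoff bound, and does not carry out the optimisation meant to produce the exponent $2-s/t$.

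The paper dispenses with heavy/light entirely and realises the balanced-link idea through a single random colouring and a first-moment bound. Order the vertices by degree, group the top $\Theta(s^2t/(t-s))$ of them into dyadic blocks $I_1,\dots,I_L$, and for each vertex $v_j$ colour its edges into each earlier block $I_\ell$ \emph{hypergeometrically}: a uniformly random half of $I_\ell$ becomes $v_j$'s red neighbours there, independently over $\ell$ and $j$; all other edges are uniform. Negative correlation then gives that if the $s$-side $S$ has $s_\ell$ points in $I_\ell$, a later vertex is monochromatic red to $S$ with probability at most $2^{-s}\exp\bigl(-\tfrac12\sum_\ell 2^{-\ell}(s_\ell^2-s_\ell)\bigr)$; a short Lagrange-multiplier lemma bounds the exponent below by $\Omega(s^2/i)$, where $v_i$ is the rightmost vertex of $S$. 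Feeding this into the degree-ordered $K_{s,t}$-count of Proposition~\ref{prop:weak-kst-lb} --- splitting copies by the position of $v_i$ and by how many $t$-side vertices lie to its right, then optimising over both --- yields the extra $s^{1-s/t}$ factor directly. There is no deterministic pre-colouring and no case split on $s$-sets; the balance is built into the distribution.
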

In particular, if $t \geq (1+\delta)s$ for any fixed $\delta>0$, then we get a power saving over the earlier lower bound of $\Omega(st 2^s)$. Moreover, once $t = \Omega(s\log s)$, the bound is tight up to a constant factor.

\begin{cor}
	If $t = \Omega(s\log s)$, then
	\[
		\rhat(K_{s,t}) = \Theta(s^2 t 2^s).
	\]
\end{cor}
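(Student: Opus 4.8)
The plan is to combine the lower bound of \cref{thm:kst-lb} with the upper bound $\rhat(K_{s,t}) = O(s^2 t 2^s)$ of Erd\H os, Faudree, Rousseau, and Schelp~\cite{MR479691} recalled above, after observing that the two bounds agree up to a constant factor precisely in the regime $t = \Omega(s\log s)$. All of the real work is contained in \cref{thm:kst-lb}; the corollary is bookkeeping.

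The first step is to simplify the exponent $2 - \tfrac st$ appearing in \cref{thm:kst-lb}. The hypothesis $t = \Omega(s\log s)$ supplies an absolute constant $c>0$ with $t \geq c s \log s$ for all sufficiently large $s$ (and the finitely many smaller values of $s$ are harmless, since then $s = O(1)$ and $s^{2 - s/t} = \Theta(1) = \Theta(s^2)$). Hence $s/t \leq 1/(c\log s)$, so that
\[
	s^{s/t} \leq s^{1/(c\log s)} = \exp\!\left(\frac{\ln s}{c\log s}\right) = O(1),
\]
and therefore $s^{2 - s/t} = s^2/s^{s/t} = \Omega(s^2)$. Substituting this bound into \cref{thm:kst-lb} gives $\rhat(K_{s,t}) = \Omega(s^2 t 2^s)$.

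For the matching upper bound I would simply invoke the bound $\rhat(K_{s,t}) = O(s^2 t 2^s)$ from~\cite{MR479691}, which holds for all $s \leq t$ with no lower bound required on $t$. Combining the two bounds yields $\rhat(K_{s,t}) = \Theta(s^2 t 2^s)$, as claimed. There is no genuine obstacle to overcome: the only point meriting a moment's care is the behaviour of the implied constants for small $s$, handled above, and the content of the statement is just the observation that the power saving $s^{-s/t}$ in \cref{thm:kst-lb} becomes bounded exactly once $t$ outgrows linear order by a logarithmic factor, i.e.\ once $t = \Omega(s\log s)$.
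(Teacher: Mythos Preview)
Your proposal is correct and is exactly the intended argument: the paper does not give a separate proof of the corollary, treating it as immediate from \cref{thm:kst-lb} together with the upper bound of~\cite{MR479691}, and your computation that $s^{s/t}=O(1)$ once $t=\Omega(s\log s)$ is precisely the ``bookkeeping'' required.
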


The second question raised by Erd\H os, Faudree, Rousseau, and Schelp \cite{MR479691} concerned book graphs. Given positive integers $k$ and $n$, the \emph{book graph} $B_n\up k$ consists of $n$ copies of $K_{k+1}$, glued along a common $K_k$. Equivalently, it can be described as the join of a $K_k$ and an independent set of order $n$. This $K_k$ is called the \emph{spine} and the vertices of the independent set are called \emph{pages}. Ramsey numbers of book graphs play a central role in Ramsey theory, because all known techniques for proving upper bounds on the diagonal Ramsey numbers $r(K_t)$ 
rely on induction schemes that repeatedly use bounds on $r(B_n\up k)$ for appropriately chosen $k<t$ and $n$. These Ramsey numbers have received considerable attention of late, beginning with work of the first author~\cite{MR4115773}, who asymptotically determined $r(B_n\up k)$ for $k$ fixed and $n \rightarrow \infty$, and continuing with work of the authors~\cite{CFW, CFW21} giving alternative proofs and exploring variations of the  basic question. Regarding size Ramsey numbers, Erd\H os, Faudree, Rousseau, and Schelp \cite{MR479691} proved that 
\[
	\Omega(k^2 n^2) \leq \rhat(B_n\up k) \leq O(16^k n^2)
\]
for $n$ sufficiently large in terms of $k$. 
Thus, while they were able to prove that the dependence on $n$ is quadratic, there was a massive gap between the lower and upper bounds for the dependence on $k$. Our second main result closes this gap, determining $\rhat(B_n\up k)$ up to a constant factor for $n$ sufficiently large in terms of $k$.

\begin{thm}\label{thm:books}
	For every fixed $k \geq 2$ and all sufficiently large $n$, 
	\[
		 \rhat(B_n\up k) = \Theta(k2^k n^2).
	\]
\end{thm}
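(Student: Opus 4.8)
The plan is to prove the bounds $\rhat(B_n\up k)=\Omega(k2^kn^2)$ and $\rhat(B_n\up k)=O(k2^kn^2)$ separately. Since the bounds of Erd\H os, Faudree, Rousseau, and Schelp already pin down the order as $\Theta(n^2)$ for each fixed $k$, the point is to determine the dependence of the hidden constant on $k$.

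For the upper bound I would take $G=B_B\up R$, the book graph with a spine $S$ of size $R$ of order $kn$ and an independent set $P$ of $B$ pages with $B$ of order $2^kn$, so that $|E(G)|=\binom R2+RB=\Theta(k^2n^2)+\Theta(k2^kn^2)=\Theta(k2^kn^2)$. Fix a $2$-colouring of $E(G)$. If the induced colouring of $S$ already contains a monochromatic $B_n\up k$ we are done, so assume not. Since $R$ is far larger than $r(K_k)$ (this is where $n$ is taken large relative to $k$), a supersaturated Ramsey theorem produces $\Omega(R^k)$ monochromatic $k$-cliques inside $S$, at least half of them, say, red. For a page $p\in P$ let $S_p\subseteq S$ be the set of vertices joined to $p$ in red; if some red $k$-clique lies inside $S_p$, then it together with every page joined in red to all of it forms a red book provided there are at least $n$ such pages. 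Thus, to avoid a monochromatic book, the colouring must in particular be such that for \emph{every} page $p$ the restriction of the colouring to $S_p$ spans no red $k$-clique and the restriction to $S\setminus S_p$ spans no blue $k$-clique. The crux is that, once $R$ is of order at least $kn$, there is no $2$-colouring of $K_R$ which contains no monochromatic $B_n\up k$ and can be partitioned into a part spanning no red $k$-clique and a part spanning no blue $k$-clique: a $K_k$-free part is, by Tur\'an's theorem, essentially complete $(k-1)$-partite, so its complement contains $k-1$ cliques each of size at least $R/(k-1)\ge n$, yielding a monochromatic book. Averaging over the $B$ pages then forces some red $k$-clique in $S$ to be joined in red to at least $n$ pages, giving a red $B_n\up k$. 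The bulk of the work is bookkeeping: carrying both colours symmetrically, handling books with a spine vertex among the pages (automatically controlled by the $K_k$-freeness, since a $k$-clique with a common neighbour would extend to a $(k+1)$-clique), and making the averaging quantitatively efficient enough that $B$ of order $2^kn$ suffices.

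For the lower bound I would colour $E(G)$ uniformly at random. Since $G\to B_n\up k$, a monochromatic copy appears with probability $1$, so a union bound over possible spines gives
\[
\sum_{W}\pr\bigl[W\text{ is monochromatic and has at least }n\text{ monochromatic common neighbours}\bigr]\ge 1,
\]
where $W$ runs over the $k$-cliques of $G$. If the common neighbourhood of $W$ has size much less than $n2^k$, the corresponding probability is $e^{-\Omega(n)}$, so, since $G$ has at most $n^{O(k)}$ cliques once we assume $|E(G)|<k2^kn^2$, the ``active'' cliques --- those whose common neighbourhood has size at least $(1-o(1))n2^k$ --- must number at least $\tfrac14 2^{\binom{k}{2}}$. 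By itself this gives only $\rhat(B_n\up k)=\Omega(kn2^k)$, a bound already witnessed by a single large book, so the extra factor of $n$ is the heart of the matter. The plan is to show that the active cliques are forced to span $\Omega(kn)$ vertices of $G$: if they spanned fewer, one could $2$-colour $G$ with no monochromatic $B_n\up k$ by colouring the few spanning vertices according to a colouring of their clique which has no monochromatic $B_n\up k$ yet admits a partition into a part with no red $k$-clique and a part with no blue $k$-clique --- available precisely because there are fewer than $\Theta(kn)$ of them, by the same Tur\'an-type fact --- and then extending to the rest of $G$. Once $\Omega(kn)$ vertices lie in active cliques, each such vertex is adjacent to all the common neighbours of such a clique and hence has degree at least $(1-o(1))n2^k$, so
\[
|E(G)|\ \ge\ \Omega\bigl(kn\cdot n2^k\bigr)\ =\ \Omega\bigl(kn^2 2^k\bigr).
\]
I expect the main obstacle to be this last construction: $2$-colouring $G$ with no monochromatic book when the active cliques span too few vertices, and in particular controlling the vertices and edges lying outside that span.
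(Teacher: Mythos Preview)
Your host graph for the upper bound is exactly the paper's choice, but the argument you sketch has real gaps that the paper's regularity-based proof is designed to bridge. First, it is \emph{not} true that, to avoid a monochromatic book, every page $p$ must have $S_p$ free of red $K_k$'s: up to $n-1$ pages may extend any given red clique. Second, the assertion that a $K_k$-free graph is ``essentially complete $(k-1)$-partite'' is false --- Tur\'an only bounds the edge count, and a $K_k$-free red graph on $A$ need not force any large blue clique in $A$, let alone $k-1$ of them. What you actually need is a lower bound, for every partition $S=S_p\cup(S\setminus S_p)$, on the number of red $K_k$'s in $S_p$ plus blue $K_k$'s in $S\setminus S_p$; no elementary averaging yields this with the right constant $2^{-k}$. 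The paper instead regularises the spine, and the dichotomy is: either some part of the reduced graph has low degree, giving a red clique with many red extensions inside the spine via a counting lemma, or the reduced graph has high minimum degree, whence Tur\'an yields a $(k,\varepsilon,\delta)$-good configuration whose common neighbourhood includes all $N=2^{k+1}n$ pages, and a separate lemma then extracts a monochromatic $B_n\up k$. The regularity is not decoration; it supplies exactly the structural control your averaging lacks.

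For the lower bound, the uniform random colouring is not strong enough, and the paper's key idea is precisely to abandon it. With uniform colouring, an ``active'' clique $W$ (common neighbourhood of size $\approx n2^k$) has expected monochromatic extensions $\approx n$, right at the threshold, so Chernoff gives nothing. Your plan to remedy this --- if the active cliques span few vertices $V$, colour $V$ by a special colouring admitting a partition into a red-$K_k$-free part and a blue-$K_k$-free part, then extend --- does not control books with spine in $V$ and pages outside $V$; any active clique in $V$ has $\approx n2^k$ common neighbours, almost all outside $V$, and no extension rule you describe prevents $n$ of those from matching the clique's colour. The paper's construction is quite different: order vertices by degree, put the top $\Theta(kn)$ into layers $V_0,\dots,V_s$ with a Tur\'an colouring (red inside, blue between), and colour each edge from $V_i$ to a low-degree vertex red with probability $p_i=\tfrac12(i/s)^{1/k}$, \emph{not} $\tfrac12$. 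This degree-dependent bias is what pushes the expected number of extensions below $n/2$ for every potential spine, after which Chernoff and a union bound finish. The missing factor of $n$ you seek comes from the deterministic Tur\'an layer on $\Theta(kn)$ high-degree vertices, not from a span argument about active cliques.
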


The third question raised in \cite{MR479691} concerns graphs which we call {starburst graphs} (they appear to have not been previously named in the literature). For positive integers $k$ and $n$, the \emph{starburst graph} $S_n\up k$ is obtained from $K_k$ by adding $n$ pendant edges to every vertex of $K_k$; thus, it has $kn+k$ vertices. Erd\H os, Faudree, Rousseau, and Schelp \cite{MR479691} proved\footnote{They only included the proof for the weaker lower bound $n^2/2$, but claimed the bound shown. For completeness, we include a proof of the lower bound in \cref{sec:starbursts}.} that if $k$ is fixed and $n$ is sufficiently large, then
\[
	\Omega(k^3 n^2) \leq \rhat(S_n\up k) \leq O(k^4 n^2).
\]
Thus, in this case, there is only a $\Theta(k)$ gap between the upper and lower bounds. Our final main result shows that the lower bound is tight up to the constant factor.

\begin{thm}\label{thm:starbursts}
	For every fixed $k \geq 2$ and all sufficiently large $n$, 
	\[
		\rhat(S_n\up k) = \Theta(k^3 n^2).
	\]
\end{thm}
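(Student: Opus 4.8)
The plan is to establish a matching upper bound $\rhat(S_n\up k) = O(k^3 n^2)$, since the lower bound is quoted from \cite{MR479691}. The natural candidate for a Ramsey host is a random-like graph built on a small vertex set: take a clique-like or random graph $G$ on $N = \Theta(k^2 n)$ vertices — or perhaps $N = \Theta(kn)$ if one is more careful — with edge density bounded away from $0$, so that $e(G) = \Theta(N^2) = \Theta(k^3 n^2)$ after optimizing $N$. The first step is to identify the right parameters: we need $G$ to have the property that in any $2$-coloring, some color class contains a $K_k$ together with enough room to hang $n$ pendant edges off each of its $k$ vertices. So I would look for a color class containing a $K_k$ whose common (majority-color) neighborhood, suitably distributed, has at least $kn$ vertices available to serve as the pendant leaves, with the leaves assignable disjointly to the $k$ spine vertices.

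The key steps, in order: (1) Fix $G = G(N,1/2)$ or an explicit pseudorandom graph on $N$ vertices. (2) In any $2$-coloring of $E(G)$, apply a Ramsey-type / counting argument to find a monochromatic (say red) $K_k$ — this needs $N \gg r(K_k) = 2^{O(k)}$, which is cheap. (3) The real content: find such a red $K_k$ whose vertices have a large common red neighborhood, or more precisely, such that one can greedily assign $n$ private red neighbors to each of the $k$ spine vertices. A convexity / averaging argument over all $K_k$'s in $G$ (weighted by how red they are) should show that the \emph{typical} red $K_k$ has common red neighborhood of size $\gtrsim N/2^k$, so we need $N/2^k \gtrsim kn$, i.e.\ $N = \Theta(k 2^k n)$. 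Then $e(G) = \Theta(N^2) = \Theta(k^2 4^k n^2)$ — which is too big, off by a factor of $\sim 4^k / k$. So a pure random graph is the wrong host.

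The fix, and the main obstacle, is that one should not demand a \emph{common} red neighborhood; the pendant edges at different spine vertices are independent, so we only need each individual spine vertex to have $n$ red neighbors (disjoint from the others and from the $K_k$). This is a much weaker requirement and suggests using a host graph that is itself a blow-up: replace each vertex of a small Ramsey host for $K_k$ by an independent set of size $\Theta(n)$, or attach to a monochromatic $K_k$ a reservoir of leaf-vertices. Concretely, I would take $G$ to be $K_m$ for $m = \Theta(2^{k/2})$ (an optimal-order Ramsey host for $K_k$) joined appropriately to — or rather, take $G = K_k^{\,\text{host}} + $ (blow-up), arranging that $e(G) = O(m^2) + O(k \cdot n \cdot m)$. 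The dominant term must be made $O(k^3 n^2)$, which forces a careful choice balancing the clique part against the leaf part; I expect the honest construction is: one dense graph on $\Theta(kn)$ vertices where a density/supersaturation argument (not a clean Ramsey argument) produces, in the majority color, a $K_k$ plus $kn$ further appropriately-attached vertices simultaneously. Verifying that the majority color admits this joint structure — the $K_k$ and its $kn$ pendant slots at once, with the right counting to push the threshold down to $N = \Theta(kn)$ rather than $N = \Theta(k2^k n)$ — is the crux, and is presumably where the $k^3$ (rather than $k^2 4^k$) comes from: one pays $2^k$ only for the clique, which lives on $\Theta(2^{k/2})$ vertices contributing only $O(4^k) = O(k^3 n^2)$ edges when $n$ is large, while the $\Theta(k^3 n^2)$ leaf-edges dominate. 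I would organize the proof around this split and make the leaf-assignment a Hall's-theorem / greedy argument using that each of the $k$ spine vertices has $\Omega(kn)$ majority-color neighbors among a shared reservoir of size $O(kn)$.
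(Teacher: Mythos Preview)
Your proposal has a genuine gap: the split host (a small clique $K_m$ with $m \approx r(K_k)$ joined to a leaf reservoir) suffers from a colour-coordination problem you do not address. The adversary can colour the $K_m$ so that its only monochromatic $K_k$'s are blue, while colouring \emph{all} clique-to-reservoir edges red; then no blue spine has any blue pendants and there is no red spine at all. Nothing in your Hall/greedy step forces the pendant majority colour to match the colour of the $K_k$ you found. Your arithmetic is also off: a ``dense graph on $\Theta(kn)$ vertices'' has at most $\Theta(k^2 n^2)$ edges, which is below the lower bound $\Omega(k^3 n^2)$, so that host cannot be Ramsey for $S_n\up k$; and $O(m^2)+O(knm)$ with $m=2^{k/2}$ is $O(2^k+kn2^{k/2})$, again far too small.

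The idea you are missing is to use a \emph{sparse} random graph: take $G\sim G(N,p)$ with $p=\Theta(1/k)$ and $N=\Theta(k^2 n)$, so $e(G)\approx pN^2=\Theta(k^3 n^2)$. The paper then partitions $V(G)$ by colour-degree profile into $V_R$ (blue degree $<3n$), $V_B$ (red degree $<3n$), and $V_0$ (both degrees $\geq 3n$). If $|V_R|\geq N/3$, the blue edges inside $V_R$ are so few that the red subgraph there has density close to $p$, and a \emph{relative} Tur\'an theorem in $G(N,p)$ yields a red $K_k$; each of its vertices automatically has $\geq (p-\delta)N-3n\gg kn$ red neighbours, and a greedy assignment finishes. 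If $|V_0|\geq N/3$, run an Erd\H os--Szekeres neighbourhood-chasing argument for $2k$ steps inside $V_0$: at each step pick $v_{i+1}$ in the current common neighbourhood, record the majority colour $C_{i+1}$ to its predecessors' common neighbourhood, and simultaneously set aside a private set $Z_{i+1}$ of $\tfrac32 n$ red and $\tfrac32 n$ blue neighbours of $v_{i+1}$ disjoint from earlier $Z_j$'s (possible because $p=\Theta(1/k)$ keeps $v_{i+1}$'s footprint in $\bigcup_j Z_j$ small). At least $k$ of the $C_i$ agree, giving a monochromatic $K_k$ whose private $Z$-sets furnish the pendants in the matching colour. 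The sparsity $p=\Theta(1/k)$ is exactly what makes both the edge count and the disjointness of the $Z_i$'s work; your $p=1/2$ trial and the subsequent split construction both miss this.
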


The proofs of our main theorems are all relatively short, but they employ a surprising array of different techniques. In \cref{thm:kst-lb}, the main new idea is to use a 
random coloring with a hypergeometric distribution between certain vertices and their higher degree neighbors, rather than the uniform distribution that is usually used in Ramsey-theoretic lower bound constructions. The lower bound in \cref{thm:books} uses a degree-based random coloring, where the probability an edge is red depends on the degrees of its endpoints, while the upper bound uses some of the regularity techniques that were recently developed for studying the ordinary 
Ramsey numbers of books \cite{MR4115773,CFW,CFW21}. Finally, \cref{thm:starbursts} is proved by examining the properties of an appropriate random graph. 
We discuss each of these techniques, both at a high level and in detail, in the relevant sections.

The rest of the paper is organized as follows. We prove \cref{thm:kst-lb} in \cref{sec:complete-bipartite}, \cref{thm:books} in \cref{sec:books}, and \cref{thm:starbursts} in \cref{sec:starbursts}, concluding with some further remarks and open problems. We use $\log$ throughout to denote the base $2$ logarithm and $\ln$ for the natural logarithm. For the sake of clarity of presentation, we systematically omit floor and ceiling signs whenever they are not crucial. For the same reason, we make no serious attempt to optimize any of the constants appearing in our results.

\section{Complete bipartite graphs}\label{sec:complete-bipartite}

In order to obtain some intuition for our proof of \cref{thm:kst-lb}, it is helpful to briefly review the proofs of the existing bounds on $\rhat(K_{s,t})$, namely,
\[
	\Omega(st 2^s) \leq \rhat(K_{s,t}) \leq O(s^2 t 2^s)
\]
for $s \leq t$. We first consider the upper bound, due to Erd\H os, Faudree, Rousseau, and Schelp \cite{MR479691}.

\begin{prop}[\cite{MR479691}]\label{prop:kst-ub}
	For all $s \leq t$, $\rhat(K_{s,t}) \leq 4e s^2 t 2^s$. 
\end{prop}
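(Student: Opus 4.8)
The plan is to exhibit a graph $G$ with at most $4es^2t2^s$ edges that is Ramsey for $K_{s,t}$, via a probabilistic argument showing that a random graph in an appropriate regime works with positive probability. The natural candidate is $G = K_N$ for a suitable $N$, but that has $\binom N2$ edges which is too many; instead I would take $G$ to be a random bipartite graph, or more precisely a carefully chosen bipartite-type construction. The cleanest approach: let $G$ have one side $A$ of size $a$ and another side $B$ of size $b$, where every vertex of $A$ is joined to every vertex of $B$ (so $G = K_{a,b}$), and argue that for $a$ and $b$ chosen appropriately in terms of $s$, $t$, this $K_{a,b}$ is already Ramsey for $K_{s,t}$. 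In any two-colouring of $E(K_{a,b})$, each vertex of $A$ sends $b$ edges to $B$, so it has a monochromatic neighbourhood of size at least $b/2$; by pigeonhole at least $a/2$ vertices of $A$ have (say) a red neighbourhood of size $\geq b/2$ in $B$. Now I want to find $s$ of these vertices whose red neighbourhoods have a common intersection of size $\geq t$ — that would give a red $K_{s,t}$.

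First I would set up the counting: with $a' \geq a/2$ vertices in $A$ each having a red-neighbour set of size $\geq b/2$ inside $B$, the expected size of the common red neighbourhood of a uniformly random $s$-subset of these $a'$ vertices is at least $b \cdot (1/2)^s$ (by convexity / a standard averaging over $s$-subsets, using that each $y \in B$ lies in a random vertex's red neighbourhood with probability $\geq 1/2$, independently enough after taking the $s$-th moment — more precisely, $\E\bigl[\binom{d_R(y)}{s}\bigr] \geq \binom{a'}{s}(1/2)^s$ where $d_R(y)$ counts red-neighbours of $y$ among the $a'$ vertices). So some $s$-subset of $A$ has common red neighbourhood of size $\geq b/2^s$ in $B$. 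Choosing $b = t2^s$ makes this $\geq t$, giving the red $K_{s,t}$. Then $a$ just needs to be large enough that "at least $a/2$ vertices of $A$ are red-heavy (or all-blue-heavy)" forces one of the two colours to have $\geq$ enough such vertices to run the $s$-subset argument; taking $a = 2s$ suffices after the pigeonhole on colours (at least $a/2 = s$ have a monochromatic neighbourhood of size $\geq b/2$ in one colour... wait, need $a \geq 2s$ so that one colour class among the heavy vertices has size $\geq s$, hence $a = 4s$ to be safe, or handle the colour split with another factor of $2$). The edge count is then $ab = 4s \cdot t2^s = 4st2^s$, which is even better than claimed — so the extra factors of $s$ and $e$ in the statement suggest the actual construction is slightly different (perhaps using a blow-up to handle the averaging more carefully, or $b = est2^s$ with a union bound over all $\binom{a}{s}$ choices of $s$-set to guarantee the intersection deterministically rather than in expectation).

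The honest version I would write: take $b$ somewhat larger, $b = e s t 2^s$, and $a$ of size roughly $s^2$ or so chosen so that by Markov/union bound over the at most $\binom{a}{s} \leq a^s$ candidate $s$-subsets, with positive probability (over nothing — this is deterministic given the colouring) at least one has a large common monochromatic neighbourhood; the factor $\binom{a}{s}$ costs a $\log$ which is absorbed into the polynomial-in-$s$ slack, landing at $\leq 4es^2t2^s$ edges after bookkeeping the colour pigeonhole and the ceiling adjustments. The main obstacle is the quantitative bookkeeping: making the averaging-over-$s$-subsets argument give a common neighbourhood of size exactly $\geq t$ (not just in expectation) forces either a union bound over $s$-subsets (costing a factor polynomial in $s$ in the size of $b$, hence in the edge count) or a more clever convexity argument; getting the constant down to $4e$ is where one must be careful about whether to pay this cost in $a$ or in $b$. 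I expect the cleanest route is: fix the colour by pigeonhole (factor $2$ in $a$), then among the $\geq a/2$ red-heavy vertices use $\E[\binom{d_R(y)}{s}] \geq \binom{a/2}{s} 2^{-s}$ to conclude $\sum_{y \in B}\binom{d_R(y)}{s} \geq |B| \binom{a/2}{s}2^{-s}$, so the average over $s$-subsets $S$ of $|\bigcap_{x \in S} N_R(x)|$ is $\geq |B| 2^{-s}$, pick the best $S$, and set $|B| = t 2^s$ — requiring only that $a/2 \geq s$, i.e.\ $a = 2s$, for edge count $2st2^s$. Since this beats the stated bound, the paper's constant $4e$ and extra $s$ must come from a slightly weaker but more robust argument (e.g.\ not optimizing, or handling the "or blue" case and rounding crudely), so in the write-up I would follow the paper's actual construction once revealed rather than this tighter sketch.
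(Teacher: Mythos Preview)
Your overall strategy---take $G=K_{a,b}$ and run a pigeonhole/averaging argument to find an $s$-set on one side with a large common monochromatic neighbourhood on the other---is exactly the paper's approach. But your ``tighter sketch'' claiming that $a=2s$ suffices contains a real error, and that error is precisely the reason the bound carries an $s^2$ rather than an $s$.

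The flaw is in the convexity step. With $a'$ red-heavy vertices in $A$ and $d_R(y)$ the red degree of $y\in B$ into this set, Jensen applied to $x\mapsto\binom{x}{s}$ gives
\[
\frac{1}{b}\sum_{y\in B}\binom{d_R(y)}{s}\ \ge\ \binom{\bar d_R}{s}\ \ge\ \binom{a'/2}{s},
\]
since $\bar d_R\ge a'/2$. Your claim amounts to asserting this is at least $\binom{a'}{s}2^{-s}$, but in fact $\binom{a'/2}{s}\le\binom{a'}{s}2^{-s}$ always (each factor $\frac{a'/2-i}{a'-i}\le\frac12$), so the inequality points the wrong way. Dividing by $\binom{a'}{s}$, the average common red neighbourhood of a random $s$-subset is at least $b\cdot\binom{a'/2}{s}/\binom{a'}{s}$, and this ratio behaves like $2^{-s}e^{-\Theta(s^2/a')}$: with $a'=s$ it is zero, with $a'=2s$ it equals $1/\binom{2s}{s}\sim\sqrt{\pi s}\,4^{-s}$, and only once $a'=\Omega(s^2)$ does it become comparable to $2^{-s}$. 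Your proposed edge count $2st2^s$ would in fact contradict the asymptotic $\rhat(K_{s,t})=(\tfrac{e}{2}+o(1))s^2t2^s$ quoted just after the proposition.

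The paper accordingly takes $\lvert A\rvert=2s^2$ and $\lvert B\rvert=2et2^s$, and it also swaps which side is coloured by majority: vertices of $B$ (the large side) are declared red or blue. Each red vertex of $B$ has at least $s^2$ red neighbours in $A$ and hence contributes at least $\binom{s^2}{s}$ red copies of $K_{1,s}$ with leaf-set in $A$; pigeonholing these among the $\binom{2s^2}{s}$ possible leaf-sets yields an $s$-set in $A$ used at least $et2^s\cdot\binom{s^2}{s}/\binom{2s^2}{s}\ge t$ times. This sidesteps convexity entirely and makes the necessity of $\lvert A\rvert=\Theta(s^2)$ transparent, since one needs $\binom{\lvert A\rvert/2}{s}/\binom{\lvert A\rvert}{s}$ within a constant factor of $2^{-s}$.
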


\begin{proof}
Let $G$ be a complete bipartite graph with one part $A$ of order $2s^2$ and the other part $B$ of order $2et2^s$. We claim that $G$ is Ramsey for $K_{s,t}$, which implies the desired result since $G$ has $4es^2 t2^s$ edges. Consider a red/blue coloring of $E(G)$. Call a vertex in $B$ \emph{red} if at least half its incident edges are red and \emph{blue} otherwise. Without loss of generality, we may assume that there are at least $et2^s$ red vertices in $B$. Each such vertex contributes at least $\binom{s^2}s$ red stars $K_{1,s}$ whose central vertex is in $B$. Since there are exactly $\binom{2s^2}s$ $s$-tuples in $A$, one of them must appear as the set of leaves of such a star at least
\[
	et2^s \frac{\binom{s^2}s}{\binom{2s^2}s} \geq et2^s \cdot 2^{-s}  \left(1- \frac{1}{s^2}\right)\dotsb \left(1- \frac{s-1}{s^2}\right) \geq et \exp \left(-2\sum_{i=1}^{s-1}\frac i{s^2}\right) \geq t
\]
times, yielding a red $K_{s,t}$, where we used the inequality $1-x \geq e^{-2x}$, valid for $x \in [0,\frac 12]$.
\end{proof}

By being a little more careful in the proof of \cref{prop:kst-ub}, one can improve the bound by a factor of $8+o(1)$ to $\rhat(K_{s,t}) \leq (e/2+o(1))s^2 t2^s$, where the $o(1)$ term tends to $0$ as $s \to \infty$. This can be done 
by picking $A$ of order $s^2/2$, $B$ of order $(e+o(1))t2^s$, using both red and blue stars out of vertices of $B$ and applying convexity, though we also need to be a little more careful with the inequalities. This stronger upper bound is also present in \cite{MR479691} and
it follows from \cite[Theorem 4.6]{MR1972078} that it is asymptotically tight, i.e., that
\[
	\rhat(K_{s,t}) = \left(\frac e2+o(1)\right)s^2 t 2^s
\]
as long as $t$ is sufficiently large in terms of $s$, where the $o(1)$ term tends to $0$ as $s \to \infty$.

The lower bound argument, essentially due to Erd\H os and Rousseau \cite{MR1212883}, is somewhat more involved. Since $\rhat(K_{s,t})\geq \rhat(K_{s-2,t})$, it suffices to prove the following statement.

\begin{prop}[\cite{MR1212883}]\label{prop:weak-kst-lb}
	For all $t \geq s+2$, $\rhat(K_{s,t}) \geq st 2^s/100$.
\end{prop}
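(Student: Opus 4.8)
The plan is a direct first-moment argument. I will show that every graph $G$ with fewer than $st2^s/100$ edges admits a $2$-colouring of its edges containing no monochromatic $K_{s,t}$. Colour each edge of $G$ red or blue independently and uniformly at random. A subgraph of $G$ isomorphic to $K_{s,t}$ has exactly $st$ edges, hence is monochromatic with probability $2\cdot 2^{-st}$; writing $m=e(G)$ and $N_{s,t}(G)$ for the number of copies of $K_{s,t}$ in $G$ as a subgraph, the expected number of monochromatic copies is $2\cdot 2^{-st}N_{s,t}(G)$. If $G$ were Ramsey for $K_{s,t}$, every colouring would contain a monochromatic copy, so this expectation would be at least $1$, forcing $N_{s,t}(G)\ge 2^{st-1}$. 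It therefore suffices to bound $N_{s,t}(G)$ from above in terms of $m$ and to check that the bound is incompatible with $N_{s,t}(G)\ge 2^{st-1}$ once $m<st2^s/100$.

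The heart of the proof is an extremal estimate for the maximum number of copies of $K_{s,t}$ in a graph with $m$ edges. The extremal configuration is a complete bipartite graph --- the hypothesis $t\ge s+2$ keeps us comfortably in this regime, whereas for $t\le s$ the extremal graph is clique-like instead --- and a short computation using $\binom{a}{s}\le(ea/s)^{s}$ and $t\ge s$ shows that every complete bipartite graph $K_{a,b}$ with $ab\le m$ satisfies $N_{s,t}(K_{a,b})\le 2e^{s}m^{t}/(s^{t}t!)$. I would then establish --- or, following the original argument of Erd\H os and Rousseau, reprove via compression/shifting --- the same bound for an arbitrary graph with $m$ edges,
\[
 N_{s,t}(G) \le \frac{2e^{s}m^{t}}{s^{t}\,t!};
\]
up to an absolute constant this is an instance of Alon's theorem on the maximum number of copies of a fixed bipartite graph in a graph of prescribed size. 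Combining this with $N_{s,t}(G)\ge 2^{st-1}$ yields $2e^{s}m^{t}/(s^{t}t!)\ge 2^{st-1}$, i.e.\ $m\ge s\bigl(2^{st-2}t!/e^{s}\bigr)^{1/t}$; using $t!\ge(t/e)^{t}$ and $s\le t$, the right-hand side is at least $st2^{s}/(4e^{2})>st2^{s}/100$, which proves the proposition.

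The one genuinely nontrivial ingredient is the extremal counting bound, and that is where I expect the main difficulty to lie. Naive approaches fail badly: if one fixes the prospective $s$-side $S$ of a copy and union-bounds over the $s$-sets with $\lvert N_G(S)\rvert\ge t$, then --- because a graph with $m\approx st2^{s}$ edges can have as many as $2^{\Theta(s^{2})}$ such $s$-sets --- one loses a factor exponential in $s^{2}$ and the estimate becomes useless; working from the $t$-side is no better. What rescues the correct bound is a global fact: a graph cannot have many $s$-sets with large common neighbourhood without committing nearly all of its edges to each one (an $s$-set $S$ uses $s\lvert N_G(S)\rvert$ distinct edges, which is nearly all of $m$ when $\lvert N_G(S)\rvert$ is near its maximum $m/s$), and this forces the extremal configuration to be essentially a single complete bipartite graph. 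Turning this heuristic into a rigorous bound is the crux; granting it, the rest is the one-line computation above. Finally, note that the argument loses a factor of about $s$ compared with the upper bound of \cref{prop:kst-ub}, and recovering that factor is precisely the aim of \cref{thm:kst-lb}.
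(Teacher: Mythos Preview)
Your framework---uniformly random colouring, first moment, reduce to bounding $N_{s,t}(G)$---is exactly the paper's. The gap is that you stop precisely at the crux: you state the extremal bound $N_{s,t}(G)\le 2e^s m^t/(s^t t!)$ and defer its proof to Alon's theorem or an unspecified compression/shifting argument, explicitly acknowledging that this is ``the one genuinely nontrivial ingredient''. As written, the proposal is a reduction to a black box, not a proof.

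The paper fills this gap with a short, self-contained argument that uses neither compression nor any prior extremal result. Order the vertices so that $d_1\ge d_2\ge\dots\ge d_N$; then $d_i\le 2m/i$ by double counting. Classify each copy of $K_{s,t}$ by the highest-index vertex $v_i$ on its $s$-side: the remaining $s-1$ vertices of that side lie among $v_1,\dots,v_{i-1}$, and the entire $t$-side lies in $N(v_i)$, so the number of such copies is at most $\binom{i-1}{s-1}\binom{d_i}{t}\le (ei/s)^s(2em/(it))^t$. Summing $\sum_{i\ge s} i^{-(t-s)}$ (convergent because $t\ge s+2$) gives $N_{s,t}(G)\le(8e^2 m/(st))^t$, which is slightly weaker than the bound you quote but more than sufficient, and makes the role of the hypothesis $t\ge s+2$ transparent. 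Two corrections to your narrative: first, your guess that Erd\H os and Rousseau argue by compression is off---the degree-sequence trick above \emph{is} their argument; second, the ``global fact'' you describe heuristically is exactly the one-line inequality $d_i\le 2m/i$, so the crux is much shorter than your discussion suggests.
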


\begin{proof}
Since the desired result is obvious for $s = 1$, we will assume throughout that $s \geq 2$. Let $G$ be a graph with $q$ edges. We first give an upper bound for the number of copies of $K_{s,t}$ in $G$. Suppose $G$ has $N$ vertices $v_1,\dots,v_N$ and let $d_i = \deg(v_i)$. Assume, without loss of generality, that $d_1 \geq d_2 \geq \dotsb \geq d_N$. We recall the simple fact that $d_i \leq 2q/i$ for all $i$, which follows by double-counting the number of edges incident to $\{v_1,\dots,v_i\}$: it is at least $id_i/2$, but at most $q$, from which the bound follows. Let $N_i$ denote the number of copies of $K_{s,t}$ in $G$ where $v_i$ is the last (i.e., maximum index) vertex in the side of order $s$. Then
\begin{equation}\label{eq:N_i-bound}
	N_i \leq \binom{i-1}{s-1} \binom{d_i}t \leq \left(\frac {ei}s\right)^s \left(\frac{2eq}{it}\right)^t \leq \frac{(2e^2 q)^t}{s^s t^t i^{t-s}}.
\end{equation}
Therefore, the total number of $K_{s,t}$ in $G$ is at most
\[
	\sum_{i=s}^N N_i \leq \frac{(2e^2 q)^t}{s^s t^t} \sum_{i=s}^\infty \frac{1}{i^{t-s}} \leq \frac{(2e^2 q)^t}{s^s t^t} \int_{s-1}^\infty \frac{1}{x^{t-s}}\,\dd x = \left(\frac{2e^2 q}{s t}\right)^t \frac{s-1}{t-s-1}\left(\frac{s}{s-1}\right)^{t-s} \leq \left(\frac{8e^2 q}{s t}\right)^t,
\]
where we may start the sum at $i=s$ as $i$ is the maximum index on an $s$-set of vertices and we use our assumptions that $t \geq s+2$ and $s \geq 2$ to evaluate the integral. In the final inequality, we use the bounds $\frac{s-1}{t-s-1} \leq s \leq 2^s \leq 2^t$ and $\frac{s}{s-1} \leq 2$, which hold since $t \geq s+2$ and $s \geq 2$.

Now, suppose we color the edges of $G$ uniformly at random. Each copy of $K_{s,t}$ is monochromatic with probability $2^{1-st}$, so the expected number of monochromatic $K_{s,t}$ is at most $2(8e^2q/(st2^s))^t$. If we plug in $q =  st 2^s/100$, we find that the expected number of monochromatic $K_{s,t}$ is less than $1$, which yields the desired result.
\end{proof}

Note that in the upper bound argument, the worst case happens when all vertices in $B$ are incident with the same number of red and blue edges. However, in the lower bound argument, we colored all the edges uniformly at random, meaning that, for any given $b \in B$, the number of edges incident to $b$ that are colored red is binomially distributed. Usually, the difference between a binomial distribution and a hypergeometric distribution (where we condition on having equally many red and blue edges) is fairly minor, but it can have a significant effect when coloring, for instance, a complete bipartite graph between vertex sets $A$ and $B$ where $A$ is small.
This suggests using a slightly different procedure for the lower bound, where we randomly color the edges from low-degree to high-degree vertices hypergeometrically rather than uniformly. This intuition is indeed at the heart of our proof of \cref{thm:kst-lb}, although the actual coloring we use is somewhat more complicated: roughly speaking, we first dyadically partition according to degree and then use independent hypergeometric variables for each dyadic interval.

In the course of the proof, we will need the following simple analytic lemma.\footnote{We are grateful to Mehtaab Sawhney for a suggestion that greatly simplified the proof of this lemma.}
\begin{lem}\label{lem:lagrange}
	Let $m$ and $x$ be positive integers and let $x_1,\dots,x_m$ be non-negative integers summing to $x$. If $x \geq 2m$,
	 then
	\[
		\sum_{k=1}^m 2^{-k} (x_k^2-x_k) \geq \frac{x^2}{2^{m+3}}.
	\]
\end{lem}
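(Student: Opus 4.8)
The plan is to throw away the indices where $x_k \in \{0,1\}$, since there $x_k^2 - x_k = 0$, and to use the hypothesis $x \ge 2m$ to guarantee that the remaining ``heavy'' indices still carry a constant fraction of the total mass; on those indices one then applies Cauchy--Schwarz against the geometric weights $2^k$, which is the natural thing to do since $x \mapsto x^2$ is convex and the minimum of $\sum 2^{-k} x_k^2$ under a fixed sum constraint is governed by a Lagrange-multiplier computation.

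Concretely, set $L = \{k \in [m] : x_k \ge 2\}$. Since each $x_k$ is a non-negative integer, $x_k^2 - x_k \ge 0$ always and $x_k^2 - x_k \ge \tfrac12 x_k^2$ for $k \in L$, so
\[
	\sum_{k=1}^m 2^{-k}(x_k^2 - x_k) \;\ge\; \sum_{k \in L} 2^{-k}(x_k^2 - x_k) \;\ge\; \tfrac12 \sum_{k \in L} 2^{-k} x_k^2 .
\]
The indices outside $L$ contribute $\sum_{k \notin L} x_k \le m \le x/2$ to the total, so $\sum_{k \in L} x_k \ge x/2$. Finally, Cauchy--Schwarz applied to the vectors $(2^{-k/2} x_k)_{k \in L}$ and $(2^{k/2})_{k \in L}$ gives
\[
	\Big(\sum_{k \in L} x_k\Big)^2 \;\le\; \Big(\sum_{k \in L} 2^{-k} x_k^2\Big)\Big(\sum_{k \in L} 2^k\Big) \;\le\; 2^{m+1}\sum_{k \in L} 2^{-k} x_k^2 ,
\]
and combining the two displays yields
\[
	\sum_{k=1}^m 2^{-k}(x_k^2-x_k) \;\ge\; \tfrac12 \cdot \frac{(x/2)^2}{2^{m+1}} \;=\; \frac{x^2}{2^{m+4}} \;\ge\; \frac{x^2}{2^{m+5}}.
\]

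The only thing to notice is that the integrality of the $x_k$ together with the bound $x \ge 2m$ is used precisely to exclude the degenerate case in which every $x_k \le 1$ (where the left-hand side vanishes identically): once the light indices are discarded, the remaining content is just the standard Lagrange/Cauchy--Schwarz bound for minimizing $\sum 2^{-k} x_k^2$ with $\sum x_k$ fixed, for which integrality is irrelevant. Since the argument loses only a factor of $2$ against the target $x^2/2^{m+5}$, there is no need to be careful with the constants.
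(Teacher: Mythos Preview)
Your proof is correct and follows the same overall strategy as the paper: discard the indices with $x_k\in\{0,1\}$, use $x_k^2-x_k\ge x_k^2/2$ on the remaining ``heavy'' indices, and then lower-bound the weighted sum of squares under the surviving linear constraint. The one difference is in the final step: the paper applies the rearrangement inequality to push the light indices to the smallest weights and then carries out an explicit Lagrange-multiplier computation, whereas you go straight to Cauchy--Schwarz on the set $L$. Your route is a little cleaner---it sidesteps the rearrangement step entirely and replaces the calculus with a one-line inequality---and in fact yields the slightly stronger bound $x^2/2^{m+4}$. Conceptually the two arguments coincide, since Cauchy--Schwarz is exactly what identifies the Lagrange minimum of $\sum 2^{-k}x_k^2$ under $\sum x_k$ fixed.
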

\begin{proof}
	Let $P \subseteq [m]$ denote the set of $k \in [m]$ with $x_k \geq 1$. Then we have that
	\begin{align*}
		2^{m+1}\sum_{k=1}^m 2^{-k} (x_k^2 - x_k) &= 2^{m+1} \sum_{k \in P} 2^{-k} (x_k^2-x_k)\\
		&\geq 2^{m+1} \sum_{k \in P} 2^{-k} (x_k-1)^2 &&[y^2-y \geq (y-1)^2\text{ for } y \geq 1]\\
		&\geq \left(\sum_{k \in P} 2^{-k}(x_k-1)^2\right) \left(\sum_{k \in P} 2^k\right) &&[\textstyle \sum_{k \in P}2^k \leq \sum_{k=1}^m 2^k \leq 2^{m+1}]\\
		&\geq \left(\sum_{k \in P}(x_k-1)\right)^2 &&[\text{Cauchy--Schwarz}]\\
		&\geq (x-m)^2 \geq \frac{x^2}{4}.
	\end{align*}
	Dividing by $2^{m+1}$ gives the desired result.
\end{proof}

We are now ready to proceed with the proof of \cref{thm:kst-lb}. Once again, since $\rhat(K_{s,t})\geq \rhat(K_{s-2,t})$, it suffices to prove the following statement.

\begin{thm}
	For all $t \geq s+2$, $\rhat(K_{s,t}) \geq s^{2- \frac st}t2^s/10000$.
\end{thm}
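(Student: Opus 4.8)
The plan is to establish the contrapositive: given any graph $G$ with $q<s^{2-s/t}t2^s/10000$ edges, exhibit a red/blue colouring of $E(G)$ with no monochromatic $K_{s,t}$. As in the proof of \cref{prop:weak-kst-lb} this is done by a first-moment computation, the novelty being that the colouring is no longer uniform but the dyadic hypergeometric colouring advertised in the introduction. For the setup, order $V(G)=\{v_1,\dots,v_N\}$ so that $d_1\geq\dots\geq d_N$ with $d_i=\deg(v_i)$, recall $d_i\leq 2q/i$, and partition $V(G)$ into dyadic degree classes $D_j=\{v:2^j\leq\deg(v)<2^{j+1}\}$ for $0\leq j\leq\log(2q)$, so that $|D_j|\leq 2q/2^j$. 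Orient every edge towards its lower-degree endpoint, breaking ties by index; then each edge is ``controlled'' by exactly one endpoint, which lies in the same or an earlier dyadic class than the other.

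The colouring is as follows: for each vertex $v$ and each class $D_j$, colour the edges controlled by $v$ that lead into $D_j$ by an independent uniformly random choice of which half is red, the rest blue; all these choices (over all pairs $(v,j)$) are mutually independent. Fix a copy $F$ of $K_{s,t}$ in $G$, and for $v\in V(F)$ let $x_{v,j}$ be the number of edges of $F$ controlled by $v$ that lead into $D_j$. Using the elementary hypergeometric estimate $\binom{b-a}{b/2-a}/\binom{b}{b/2}=\prod_{i=0}^{a-1}\frac{b/2-i}{b-i}\leq 2^{-a}\exp(-\tfrac{a^2-a}{2b})$, the independence of the draws, the identity $\sum_{v,j}x_{v,j}=st$, and the bound (size of the pool for $v$'s draw into $D_j$) $\leq\deg(v)<2^{j+1}$ (valid since $v$ lies in the same or an earlier class than $D_j$), one finds that $F$ is monochromatic with probability at most $2^{1-st}\exp(-\tfrac14\Sigma_F)$, where $\Sigma_F=\sum_{v}\sum_j 2^{-j}(x_{v,j}^2-x_{v,j})$.

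It then remains to bound $\Sigma_F$ from below and run the first moment. This is the role of \cref{lem:lagrange}: for a single controlling vertex $v$ of $F$, the classes $D_j$ into which it controls edges of $F$ start at $v$'s own class, so after reindexing, \cref{lem:lagrange} shows that $\sum_j 2^{-j}(x_{v,j}^2-x_{v,j})$ is at least of order $\sigma_v^2/\Delta_v$, where $\sigma_v=\sum_j x_{v,j}$ counts the edges of $F$ controlled by $v$ and $\Delta_v$ is the largest degree of a neighbour of $v$ in $F$ controlled by $v$ — provided $\sigma_v$ is not too small relative to the number of classes involved. Summing these contributions over $v$, and using that the $st$ edges of $F$ are mostly controlled by its lower-degree vertices, leads to a bound on $\Sigma_F$ in terms of the degree $d_i$ of the last vertex on the $s$-side of $F$, at least once the comparatively few copies of $K_{s,t}$ containing a vertex of atypically large degree are set aside and handled directly. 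Plugging this into $\E[\#\text{mono }K_{s,t}]\leq\sum_i\binom{i-1}{s-1}\binom{d_i}{t}\cdot 2^{1-st}\exp(-\tfrac14\Sigma_F)$, inserting $d_i\leq 2q/i$, $\binom{i-1}{s-1}\leq(ei/s)^s$, $\binom{d_i}{t}\leq(2eq/(it))^t$, and evaluating the resulting series exactly as in \cref{prop:weak-kst-lb} but with the new exponential damping, shows the expectation is less than $1$ whenever $q<s^{2-s/t}t2^s/10000$.

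The main obstacle is precisely this lower bound on $\Sigma_F$. Any naive estimate is far too weak: a copy of $K_{s,t}$ can consist almost entirely of high-degree vertices, each contributing essentially nothing, and if the degrees in $F$ span many dyadic scales the per-vertex sums $\sum_j 2^{-j}(x_{v,j}^2-x_{v,j})$ can even vanish. \cref{lem:lagrange} is designed to capture the resulting trade-off — concentrating a vertex's controlled edges in few classes gives a large contribution, spreading them out gives a small one, but spreading out forces $F$ to use a very high-degree vertex, of which there are few by $d_i\leq 2q/i$ — and the real work is in combining it over all of $V(F)$, isolating and separately bounding the high-degree copies, and tracking how the roughly $s$ relevant dyadic scales enter the final series, which is what turns the ``ideal'' exponent $2$ into $2-\tfrac st$. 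Finally, for $t=O(s)$ the quantity $s^{2-s/t}$ is only $O(s)$ and the desired bound already follows from \cref{prop:weak-kst-lb}, so throughout one may assume $t$ is large enough for the above steps (in particular the application of \cref{lem:lagrange}) to be valid.
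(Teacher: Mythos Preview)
Your coloring is genuinely different from the paper's, and as set up it does not deliver the claimed bound. The paper partitions the first few vertices by \emph{index}, into intervals $I_\ell=\{v_i:2^\ell-1\le i\le 2^{\ell+1}-2\}$ of exact size $2^\ell$, and for each vertex $v_j$ and each $\ell$ it hypergeometrically selects a random half of \emph{all of} $I_\ell$ (not just the neighbours of $v_j$) to be ``red for $v_j$''. The point is that the pool size is precisely $2^\ell$, so for the $s$-side $S$ with largest index $i$, each later vertex $v_j$ satisfies $\Pr[v_j\text{ all-red to }S]\le 2^{-s}\exp(-c s^2/i)$ via \cref{lem:lagrange} with $m\approx\log i$. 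The damping factor $s^2/i$ depends on the \emph{rank} $i$, not on any degree, and this is exactly what combines with $\binom{i-1}{s-1}\binom{d_i}{t}$ and $d_i\le 2q/i$ to produce the extra power of $s$.

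In your scheme the pool for a controller $v\in D_{j'}$ drawing into $D_j$ has size at most $\deg(v)<2^{j'+1}$, but you then pass to the much larger bound $2^{j+1}$ (the degree of the \emph{target}) to write $\Sigma_F=\sum_{v,j}2^{-j}(x_{v,j}^2-x_{v,j})$. After reindexing and applying \cref{lem:lagrange}, the contribution of a $t$-side controller becomes only $\sigma_v^2/2^{j_{\max}+O(1)}\approx s^2/d_{i_1}$, where $d_{i_1}$ is the degree of the \emph{highest}-degree $s$-side vertex. Since $d_{i_1}$ can be as large as $2q$, you get $\Sigma_F\gtrsim ts^2/q$, which for $q=\Theta(s^{2-s/t}t2^s)$ is $o(1)$: the factor $\exp(-\Sigma_F/4)$ is essentially $1$, and your first-moment bound collapses to the uniform-coloring bound of \cref{prop:weak-kst-lb}, giving only $\rhat(K_{s,t})=\Omega(st2^s)$. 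Concretely, take $G=K_{a,b}$ with $a=\Theta(s^2)$ and $b=\Theta(t2^s)$: every controller $v\in B$ has its entire $s$-side in a single class $D_j$ with $2^j\approx b$, so your $\Sigma_F\approx t(s^2-s)/b$ is negligible, whereas the true hypergeometric gain (pool size $a$) is $\Theta(t)$. The gap is not a matter of bookkeeping: indexing the damping by the target's degree class rather than by the rank of the $s$-side throws away precisely the factor the theorem needs. The paper also needs a further case split on how many $t$-side vertices lie to the right of $i$ (the parameter $x$), optimised to $x=t$; your sketch does not have an analogue of this step.
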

\begin{proof}
	The desired result already follows from \cref{prop:weak-kst-lb} for $s<100$, so we henceforth assume that $s \geq 100$. 

	Let $L = \log \frac{s^2 t}{50(t-s)}$. We note, for future convenience, that $2^L \leq s^3 \leq t^3$, since $\frac{t}{t-s}\leq s$ for $t \geq s+2$. Additionally, since $s \geq 100$, we have that $s \geq 6 \log s \geq 2L$. 

	Let $q = s^{2- \frac st}t 2^{s}/10000$ and let $G$ be a $q$-edge graph with vertices $v_1,\dots,v_N$ and degree sequence $d_1 \geq \dotsb \geq d_N$, where $d_i = \deg(v_i)$. By adding isolated vertices to $G$ if necessary, we may assume that $L<\log(N+2)$.

	For every $1 \leq \ell \leq L$, we define the interval of vertices $I_\ell = \{v_i:2^{\ell}-1 \leq i \leq 2^{\ell+1}-2\}$. Let $A = \bigcup_{\ell=1}^L I_\ell$ and $B = V(G) \setminus A$. We color all edges inside $B$ uniformly at random. 
	All remaining edges have at least one endpoint in $A$ and we color these hypergeometrically, as follows. For every vertex $v_j \in V(G)$ and every $\ell \leq \min\{L,\cei{\log (j+2)-1}\}$, we pick a set $R_{j,\ell} \subset I_\ell$ uniformly at random among all sets with exactly $\ab{I_\ell}/2$ elements, making these choices independently for all $j$ and $\ell$. Then, for every $v_i \in I_\ell$ with $i<j$ that is adjacent to $v_j$, we color the edge $v_i v_j$ red if $i \in R_{j,\ell}$ and blue otherwise.\footnote{Note that, as described, the neighborhood of $v_j$ in $I_\ell$ isn't exactly hypergeometrically distributed, since we first pick a set of ``possible'' red neighbors hypergeometrically, but then only color red those ``possible'' neighbors which are truly neighbors in $G$. Though it doesn't really matter, this choice makes the analysis slightly simpler.

	There is also a slight subtlety when $v_j \in A$, in that, if $v_j \in I_\ell$ for some interval $I_\ell$, we only wish to describe the color of the edges from $v_j$ to the previous vertices $v_i \in I_\ell$; this is why we add the condition $i<j$.}

	One key observation we use several times is that, for every set $F$ of edges in $G$, the probability they are all red is at most $2^{-\ab F}$. To see this, we first note that it is true if $F$ consists of edges whose left endpoint lies in some fixed interval $I_\ell$ and whose right endpoint is some fixed vertex $v_j$. Indeed, in this case, the result follows immediately from the negative correlation property of the hypergeometric distribution: the probability that such an $F$ is monochromatic red equals the probability that the set of left endpoints is a subset of $R_{j,\ell}$, which is at most $2^{-\ab F}$. We can partition $F$ into subsets of this form, plus some additional edges lying in $B$. The events that each of these subsets is monochromatic red are all independent, so the probability that $F$ is monochromatic red equals the product of the probabilities that each is monochromatic red, which is at most $2^{-\ab F}$.

	Now suppose that $S \subseteq A$ is a set of vertices with $\ab S = s$. Let $s_\ell = \ab{S \cap I_\ell}$, so that $s = s_1+\dotsb+s_L$. Let $v_i$ be the rightmost vertex in $S$ (i.e., $i$ is the maximal index appearing in $S$) and let $L^*=\flo{\log (i+1)}$. By the choice of $L^*$, as $v_i \in S \subseteq A$, we have $v_i \in I_{L^*}$. We then have that $s_\ell=0$ if $\ell>L^*$, since there can be no element in $S \cap I_\ell$ for $\ell >L^*$ by the maximality of $i$.

	For $1 \leq \ell \leq L^*$ and for a vertex $v_j$ with $j>i$, the probability that $v_j$ is monochromatic red to $S\cap I_\ell$ is exactly
	\begin{align*}
		\frac{\ab {I_\ell}/2}{\ab {I_\ell}} \frac{\ab {I_\ell}/2-1}{\ab {I_\ell}-1}\dotsb \frac{\ab {I_\ell}/2-(s_\ell-1)}{\ab {I_\ell}-(s_\ell-1)} &\leq 2^{-s_\ell} \left(1-\frac 1{\ab{I_\ell}}\right)\left(1-\frac 2 {\ab{I_\ell}}\right)\dotsb \left(1- \frac{s_\ell-1}{{\ab{I_\ell}}}\right) \\
		&\leq 2^{-s_\ell} \exp \left(- 2^{-\ell} \binom{s_\ell}2\right)
		\\
		&= 2^{-s_\ell} e^{-2^{-\ell-1}(s_\ell^2-s_\ell)}.
	\end{align*}
	Therefore, the probability that $v_j$ is monochromatic red to $S$ is at most
	\[
		2^{-s} \exp \left(-\frac 12 \sum_{\ell=1}^{L^*} 2^{-\ell} (s_\ell^2-s_\ell)\right).
	\]
	We now apply \cref{lem:lagrange} with $x=s$ and $m = L^*$, which we may do since $s \geq 2L\geq 2L^*$, to conclude that the probability $v_j$ is monochromatic red to $S$ is at most
	\[
		2^{-s} \exp \left(-\frac 12 \frac{s^2}{2^{L^*+3}}\right)\leq 2^{-s} \exp\left(-\frac{s^2}{32 i}\right).
	\]

	As in the proof of \cref{prop:weak-kst-lb}, we let $N_i$ denote the number of copies of $K_{s,t}$ whose last vertex from the side of order $s$ is $v_i$. Similarly, for $v_i \in A$ and $0 \leq x \leq t$, let $N_{i,x}$ be the number of copies of $K_{s,t}$ whose last vertex from the side of order $s$ is $i$ and where exactly $x$ vertices from the side of order $t$ have index greater than $i$. 
	Let $y = t-x$. Then
	\[
		N_{i,x} \leq \binom{i-1}{s-1} \binom{i-s}{y}\binom{d_i}x \leq \left(\frac{ei}{s}\right)^s \left(\frac{ei}{y}\right)^y \left(\frac{2eq}{ix}\right)^x \leq \frac{(2e^2)^t q^x}{s^s y^y x^x i^{x-y-s}},
	\]
	where we take $0^0=1$ so that the bound also holds when $x=0$ or $x=t$.
	For each of the $x$ vertices to the right of $v_i$, they have probability at most $2^{-s} e^{-s^2/32i}$ of being monochromatic red to all $s$ chosen vertices. Additionally, by the negative correlation property mentioned earlier, the remaining $y$ vertices each have probability at most $2^{-s}$ of being monochromatic red to all $s$ chosen vertices.

	Therefore, the probability that some $K_{s,t}$ counted by $N_{i,x}$ is monochromatic red is at most 
	\begin{align}\label{eq:Nix-probability}
		\frac{(2e^2)^tq^x\cdot 2^{-st} e^{-s^2 x/32i}}{s^s y^y x^x i^{x-y-s}}  &= \frac{(2e^2)^ti^s}{s^s 2^{st}} \exp\left(x\ln q-\left(y\ln y + x\ln x + (x-y)\ln i + \frac{s^2 x}{32i}\right) \right),
	\end{align}
	where we let $0\ln0=0$, agreeing with our earlier convention that $0^0=1$. We claim that this probability is maximized when $x=t$. To see this, let 
	\[
		f(x) = x\ln q - \left((t-x)\ln(t-x)+x \ln x + (2x-t)\ln i + \frac{s^2 x}{32i}\right).
	\]
	For $x \leq t-1$, we can compute that
	\[
		f'(x) = \ln q - \left(-\ln(t-x)+\ln x + 2\ln i + \frac{s^2}{32i}\right)\geq \ln q - \left(\ln\frac{xi^2}{t-x}+\frac{s}{32}\right) \geq \ln q - \ln\left(4ts^6 e^{s/32}\right),
	\]
	since we have $s\leq i \leq 2^{L+1} \leq 2s^3$ and $t-x \geq 1$. Note that for $s \geq 100$, we have that $2^s e^{-s/32} \geq (10s)^5$. Therefore, by our choice of
	$q$, we see that 
	\[
		\frac{q}{4ts^6 e^{s/32}} = \frac{s^{2- \frac st}t 2^{s}/10000}{4ts^6 e^{s/32}} > \frac{2^s e^{-s/32}}{(10s)^5} \geq 1
	\]
	 and thus $f'(x)>0$ for all $x \in [0,t-1]$.

	This shows that $f$ is maximized on this interval at $x=t-1$. Additionally, we have that
	\begin{align*}
		f(t)-f(t-1) &=\ln q- \left(t\ln t-(t-1)\ln(t-1)+2\ln i + \frac{s^2}{32i}\right)
		\geq \ln q - \ln \left(4et s^6 e^{s/32}\right)
		\geq 0,
	\end{align*}
	again by our choice of $q$, where we use the fact that
	\[
		t\ln t - (t-1)\ln (t-1) = \int_{t-1}^t (1+\ln x)\,\dd x \leq 1+\ln t =\ln(et).
	\]
	This implies that $f(x) \leq f(t)$ for all $0\leq x \leq t$, which shows that, as claimed, the probability in (\ref{eq:Nix-probability}) is maximized for $x=t$. Therefore, for $v_i \in A$, the probability that some $K_{s,t}$ counted by $N_{i}$ is monochromatic red is at most
	\[
		\sum_{x=0}^t \frac{(2e^2)^tq^x\cdot 2^{-st} e^{-s^2 x/32i}}{s^s y^y x^x i^{x-y-s}} \leq (t+1) \frac{(2e^2q)^t 2^{-st} e^{-s^2 t/32i}}{s^s t^t i^{t-s}} \leq \frac{(4e^2q)^t}{2^{st}s^st^t}\exp\left(- \left(\frac{s^2t}{32i} + (t-s)\ln i\right)\right).
	\]
	Let $g(i) = \frac{s^2t}{32i} +(t-s)\ln i$ and note that
	\[
		g'(i) = \frac{t-s}{i} - \frac{s^2 t}{32i^2}.
	\]
	This shows that $g(i)$ is minimized when $i = \frac{s^2t}{32(t-s)}$ and, therefore, the minimum value of $g$ is $(t-s)\ln \frac{es^2t}{32(t-s)}$. 
	Plugging this in, the probability that some $K_{s,t}$ counted by $N_i$ is monochromatic red is at most
	\[
		\frac{(4e^2q)^t}{2^{st}s^st^t} \left(\frac{32(t-s)}{es^2 t}\right)^{t-s} \leq \left(\frac{4e^2q}{st2^s}\right)^t \left(\frac{50(t-s)}{st}\right)^{t-s}.
	\]
	Summing this up, the probability that some $K_{s,t}$ whose side of order $s$ lies in $A$ is monochromatic red is at most
	\[
		\sum_{i=s}^{2^{L+1}}  \left(\frac{4e^2q}{st2^s}\right)^t \left(\frac{50(t-s)}{st}\right)^{t-s} \leq 2t^3 \left(\frac{4e^2q}{st2^s}\right)^t \left(\frac{50(t-s)}{st}\right)^{t-s}\leq \left(\frac{100q}{st2^s}\right)^t \left(\frac{50(t-s)}{st}\right)^{t-s},
	\]
	since $2^{L+1} \leq 2t^3 \leq 3^t$ for  $t \geq 6$.
	For all the remaining $K_{s,t}$, we use the negative correlation property to see that each one is monochromatic red with probability at most $2^{-st}$. Adding this up over all $i >2^L$ and using our bound $N_i\leq \frac{(2e^2 q)^t}{s^s t^t i^{t-s}}$ from \eqref{eq:N_i-bound}, we find that the probability any of these copies is monochromatic red is at most 
	\[
		\sum_{i=2^L+1}^N \frac{(2e^2q)^t}{s^s t^t i^{t-s}} 2^{-st} \leq \frac{(2e^2 q)^t}{s^s t^t 2^{st}}  \int_{2^L}^\infty \frac{1}{x^{t-s}}\,\dd x=\left(\frac{2e^2 q}{st 2^s}\right)^t  \frac{2^L}{t-s-1} \left(\frac{s}{2^L}\right)^{t-s}\leq \left(\frac{100 q}{st2^s}\right)^t\left(\frac{s}{2^L}\right)^{t-s},
	\]
	using our assumption $t\geq s+2$ to evaluate the integral and again using the bound $2^L \leq t^3 \leq 3^t$.
	In total, the probability that any $K_{s,t}$ is monochromatic red is at most
	\[
		\left(\frac{100q}{st2^s}\right)^t \left(\left(50\frac{t-s}{st}\right)^{t-s}+\left(\frac{s}{2^L}\right)^{t-s}\right).
	\]
	We now recall our choice of $2^L = \frac{s^2t}{50(t-s)}$, so that both terms in parentheses are equal. Therefore, in total, we find that the probability of a monochromatic red $K_{s,t}$ is at most 
	\[
		\left(\frac{100 q}{st2^s}\right)^t 2\left(50\frac{t-s}{st}\right)^{t-s} < \left(\frac{100q}{st2^s}\right)^t \left(\frac{100}{s}\right)^{t-s}=100^{-s} <\frac 12,
	\]
	by plugging in $q = s^{2-\frac{s}{t}}t2^s/10000$. By symmetry, the same estimate holds for the probability of a blue $K_{s,t}$, which yields the desired conclusion.
\end{proof}

\section{Book graphs}\label{sec:books}

We prove the lower and upper bounds in \cref{thm:books} separately. We begin with the lower bound, which is somewhat simpler.

\subsection{The lower bound}\label{subsec:books-lb}

In this section, we use the Chernoff bound in the following form.

\begin{lem}\label{lem:chernoff}
	Let $X_1,\dots,X_t$ be independent random variables taking values in $\{0,1\}$ and let $X = \sum X_i$. If $\E[X] \leq n/2$, then
	\[
		\pr(X \geq n) \leq e^{-n/6}.
	\]
\end{lem}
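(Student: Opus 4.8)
The plan is to use the standard exponential moment (Chernoff--Bernstein) method. Fix a parameter $\lambda > 0$, to be chosen at the end. Applying Markov's inequality to the nonnegative random variable $e^{\lambda X}$ gives
\[
	\pr(X \geq n) = \pr\bigl(e^{\lambda X} \geq e^{\lambda n}\bigr) \leq e^{-\lambda n}\,\E\bigl[e^{\lambda X}\bigr].
\]
Since the $X_i$ are independent, $\E[e^{\lambda X}] = \prod_{i=1}^t \E[e^{\lambda X_i}]$, so the first task is to bound a single factor.

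Writing $p_i = \pr(X_i = 1)$, we have $\E[e^{\lambda X_i}] = 1 + p_i(e^{\lambda} - 1)$, and by the elementary inequality $1 + z \leq e^{z}$ this is at most $e^{p_i(e^{\lambda}-1)}$. Multiplying over $i$ and writing $\mu = \E[X] = \sum_i p_i$ yields $\E[e^{\lambda X}] \leq e^{\mu(e^{\lambda}-1)}$, hence
\[
	\pr(X \geq n) \leq \exp\bigl(\mu(e^{\lambda}-1) - \lambda n\bigr).
\]
Because $e^{\lambda} - 1 > 0$, we may replace $\mu$ by the larger quantity $n/2$ permitted by hypothesis, obtaining $\pr(X \geq n) \leq \exp\bigl(\tfrac{n}{2}(e^{\lambda}-1) - \lambda n\bigr)$.

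It remains to pick $\lambda$. Optimizing the exponent in $\lambda$ gives $e^{\lambda} = 2$, i.e.\ $\lambda = \ln 2$, for which the exponent becomes $n\bigl(\tfrac12 - \ln 2\bigr)$. The proof then reduces to the purely numerical check $\ln 2 - \tfrac12 \geq \tfrac16$, equivalently $\ln 2 \geq \tfrac23$, which holds; this yields $\pr(X \geq n) \leq e^{-n/6}$, as claimed. I do not expect any genuine obstacle: this is the textbook Chernoff bound, and the constant $1/6$ (rather than the sharp value $\ln 2 - \tfrac12 \approx 0.193$) is used only to keep the statement clean. If one preferred to avoid the explicit moment computation, the lemma could alternatively be cited directly from a standard reference, but the short self-contained argument above is simplest.
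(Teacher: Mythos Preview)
Your proof is correct and is essentially the same as the paper's: both apply the Chernoff bound with the ratio $n/\E[X]\geq 2$, obtaining the exponent $\ln 2-\tfrac12>\tfrac16$. The only difference is cosmetic---the paper cites this bound from Janson--\L uczak--Ruci\'nski rather than deriving it via the exponential-moment calculation you wrote out.
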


\begin{proof}
	It follows from Corollary 2.4 and Theorem 2.8 in \cite{MR1782847} that if $x \geq c\E[X]$ for some $c>1$, then $\pr(X \geq x) \leq e^{-c'x}$, where $c' =\ln c-1+\frac1c$. Plugging in $c=2$ (so that $x=n$), we find that $c' = \ln 2 -\frac 12>\frac 1{6}$, which yields the claimed bound.
\end{proof}

We also need the following simple analytic result, estimating a certain integral that arises in the proof.

\begin{lem}\label{lem:beta-integral}
	For every integer $k \geq 1$, 
	\[
		\int_0^1 \ln \left(1- \frac{y^{1/k}}{2}\right)\dd y \leq \frac 1k - \ln 2.
	\]
\end{lem}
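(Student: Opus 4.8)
The plan is to reduce the integral to something manifestly bounded by integration by parts. Write $J_k = \int_0^1 \ln\bigl(1 - \tfrac12 y^{1/k}\bigr)\,\dd y$ and integrate by parts with $u = \ln\bigl(1-\tfrac12 y^{1/k}\bigr)$ and $\dd v = \dd y$, so that $v = y$ and $\dd u = \dfrac{-y^{1/k-1}}{k\,(2-y^{1/k})}\,\dd y$. The boundary term is $\bigl[y\ln(1-\tfrac12 y^{1/k})\bigr]_0^1 = \ln\tfrac12 = -\ln 2$, where the contribution at $y=0$ vanishes because $\ln(1-\tfrac12 y^{1/k}) \to 0$ there. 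This yields
\[
	J_k = -\ln 2 + \frac1k \int_0^1 \frac{y\cdot y^{1/k-1}}{2 - y^{1/k}}\,\dd y = -\ln 2 + \frac1k \int_0^1 \frac{y^{1/k}}{2 - y^{1/k}}\,\dd y .
\]

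Now on $[0,1]$ we have $0 \le y^{1/k} \le 1$, hence $2 - y^{1/k} \ge 1 \ge y^{1/k}$, so the integrand $y^{1/k}/(2-y^{1/k})$ is at most $1$ pointwise; therefore the remaining integral is at most $1$, and we conclude $J_k \le \tfrac1k - \ln 2$, as desired. (Alternatively, substituting $y = u^k$ at the start and then integrating by parts gives the slightly sharper $J_k = -\ln 2 + \int_0^1 \frac{u^k}{2-u}\,\dd u \le -\ln 2 + \int_0^1 u^k\,\dd u = \frac1{k+1} - \ln 2$; either route suffices.) There is no genuine obstacle here: the only point needing a word of justification is the vanishing of the boundary term and the integrability of the expression involving $y^{1/k-1}$ near $y=0$, both of which are immediate once the factor $y$ from $v$ is absorbed.
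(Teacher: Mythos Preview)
Your proof is correct and takes essentially the same approach as the paper: integration by parts to reduce to an integral bounded via $2-(\cdot)\ge 1$. The paper substitutes $u=y^{1/k}$ first and then integrates by parts, arriving at exactly the expression $-\ln 2 + \int_0^1 \frac{u^k}{2-u}\,\dd u \le \frac1{k+1}-\ln 2$ that you give as your parenthetical alternative; your main route simply skips the substitution and bounds the equivalent integral $\frac1k\int_0^1 \frac{y^{1/k}}{2-y^{1/k}}\,\dd y$ directly.
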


\begin{proof}
	Denote the integral by $I$. We begin by changing variables to $u = y^{1/k}$, so that $\dd u = \frac 1k y^{(1-k)/k}\dd y = \frac 1k u^{1-k}\dd y$ or, equivalently, $\dd y = k u^{k-1}\dd u$. We get that
	\[
		I = \int_0^1 k u^{k-1} \ln \left(1- \frac u2\right)\dd u.
	\]
	We now integrate by parts, letting $\dd g = ku^{k-1}\dd u$ and $f = \ln(1- \frac u2)$. Then $g = u^k$ and $\dd f=\dd u/(u-2)$, so that
	\[
		I = \left[ u^k \ln \left(1- \frac u2\right)\right]_0^1 - \int_0^1 \frac{u^k}{u-2}\,\dd u = -\ln 2 + \int_0^1 \frac{u^k}{2-u}\,\dd u.
	\]
	This integral can be explicitly evaluated in terms of the beta function, but for our purposes it suffices to bound it: we note that $1/(2-u)\leq 1$ since $u \leq 1$ and, therefore,
	\[
		\int_0^1 \frac{u^k}{2-u}\,\dd u \leq \int_0^1 u^k \,\dd u=\frac{1}{k+1} \leq \frac 1k.
	\]
	Thus, $I \leq \frac 1k -\ln 2$, as claimed.
\end{proof}

We now prove our lower bound on $\rhat(B_n\up k)$. We only state the result for $k \geq 25$, which implies the lower bound in \cref{thm:books} by picking the implicit constant appropriately to deal with $2 \leq k \leq 24$.

\begin{thm}\label{thm:books-lb}
	For every $k \geq 25$ and every $n\geq 300k^2$, $\rhat(B_n\up k)\geq k 2^k n^2/1200$.
\end{thm}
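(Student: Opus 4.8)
The plan is to prove the lower bound by exhibiting a graph $G$ with few edges and a random two-coloring of $E(G)$ under which, with positive probability, there is no monochromatic $B_n\up k$. As hinted in the introduction, the coloring should be \emph{degree-based}: the probability that an edge $uv$ is red should depend (roughly) on $\deg(u)$ and $\deg(v)$, so that low-degree vertices are more likely to be monochromatic to their neighborhoods in one color and high-degree vertices in the other, balancing the two colors' worst cases. Concretely, I would take $G$ to be a graph on $N$ vertices with $q = \Theta(k2^k n^2)$ edges — e.g., by the same double-counting as in \cref{prop:weak-kst-lb} we can work with an arbitrary $q$-edge graph with degrees $d_1\geq\dots\geq d_N$, using $d_i\leq 2q/i$ — and color each edge $v_iv_j$ (with $i<j$) red independently with probability $p_i$, where $p_i$ is a carefully chosen increasing function of $i$ (equivalently, a decreasing function of $\deg(v_i)$), tuned so that $p_i$ is close to $\tfrac12$ but slightly biased.

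The key steps, in order: (1) Fix the coloring probabilities $p_i$; a natural choice makes $p_i^k + (1-p_i)^k$ roughly constant in $i$, which is what governs the chance a given vertex is monochromatic-red (resp. blue) to a fixed $k$-set of earlier neighbors forming a candidate spine. (2) For a copy of $B_n\up k$, split into cases according to where the spine $K_k$ sits relative to the rest; bound the number of candidate spines $\{v_{i_1},\dots,v_{i_k}\}$ with maximum index $i$ by roughly $\binom{i-1}{k-1}d_i^{\,?}$ as in the $K_{s,t}$ argument. (3) Given a candidate red spine, the $n$ pages must each be red to all $k$ spine vertices; since pages are distinct vertices and edges are colored independently, the number of red pages available to a fixed spine is a sum of independent indicators, so \cref{lem:chernoff} bounds the probability that at least $n$ of them materialize — this is where the Chernoff lemma is used, and where the factor $e^{-n/6}$ beats the polynomial-in-$n$ count of spines (using $n \geq 300k^2$). (4) Integrate/sum the resulting bound over $i$ (here \cref{lem:beta-integral}, via the substitution turning a product $\prod(1-\tfrac{y^{1/k}}2)$-type quantity into the stated beta-integral bound, controls the contribution of the pages' probabilities after optimizing over degree), to show the expected number of monochromatic red $B_n\up k$ is $<\tfrac12$; by the symmetry of the construction the same holds for blue, so with positive probability $G$ is not Ramsey, forcing $\rhat(B_n\up k) > q = k2^kn^2/1200$.

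The main obstacle I expect is step (1)–(2): choosing the bias profile $p_i$ and the degree truncation so that the per-vertex monochromatic probability, raised to the $k$-th power and multiplied against the spine count $\binom{i-1}{k-1}$ and the Chernoff factor for the pages, sums to something small \emph{uniformly} over all of $i$, while still keeping $q$ as small as $\Theta(k2^kn^2)$. The uniform coloring used in \cref{prop:weak-kst-lb} would only give $\Omega(k^2n^2)$-type bounds (it cannot see the $2^k$), so the whole point is that the $i$-dependent bias squeezes out the extra $2^k/k$ factor; getting the exponents to match requires balancing the $\binom{i-1}{k-1}\sim (ei/k)^k$ growth against the $(p_i^k+(1-p_i)^k)$ decay and the $d_i\leq 2q/i$ bound, and this is the delicate computation. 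The use of \cref{lem:beta-integral} strongly suggests that after optimizing the relevant expression in the degree variable one is left with exactly an integral of the form $\int_0^1 \ln(1-\tfrac{y^{1/k}}2)\,\dd y$, and the bound $\tfrac1k-\ln 2$ is precisely what yields the clean constant; I would reverse-engineer the coloring from that target.
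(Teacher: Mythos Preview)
Your proposal has the right skeleton --- degree-based bias, Chernoff on the page count, union bound over spines --- but it misses the one structural idea that makes the argument go through. A purely random coloring with edge-probability $p_i$ depending on the smaller index cannot control \emph{blue} books: if $p_i$ is small for small $i$ (as you suggest), then a $k$-clique sitting entirely among the top $\Theta(n)$ vertices is almost surely blue, has $\Theta(2^k n)$ common neighbours (these vertices have degree up to $2q/i = \Theta(2^k n)$), and each such neighbour is a blue page with probability $\prod_\ell(1-p_{i_\ell})\approx 1$. The expected number of blue pages is then $\Theta(2^k n)\gg n$, so Chernoff goes the wrong way. Your heuristic ``choose $p_i$ so that $p_i^k+(1-p_i)^k$ is roughly constant'' does not fix this, because the problem is not the page probability but the unbounded common-neighbourhood size for high-degree spines.

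The paper's cure is to insert a \emph{deterministic} Tur\'an coloring on the top $\Theta(kn)$ vertices: partition them into $s+1 = k/3+1$ blocks $V_0,\dots,V_s$ of size $n/10$, colour edges inside each $V_i$ red and edges between blocks blue, and only then use the random bias $p_i=\tfrac12(i/s)^{1/k}$ on edges from $V_i$ to the low-degree remainder $U$. This forces any blue spine to take at most one vertex per $V_i$, hence at most $s+1<k$ vertices from the high-degree part; the remaining spine vertices lie in $U$ and cap the common neighbourhood at $D_s=\Theta(n)$. The product $\prod_{i\in R}(2-2p_i)$ over the blocks the spine visits is then what \cref{lem:beta-integral} bounds --- that is where the integral actually enters, not via summing over spine positions. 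With this structure in place, one checks $\E[\ext(Q)]\le n/2$ for \emph{every} $Q$ (not a weighted sum over $i$), and the union bound over $\binom{N}{k}$ spines against $e^{-n/6}$ finishes using $n\ge 300k^2$.
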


\begin{proof}
	Let $G$ be a graph with at most $q = k 2^k n^2/1200$ edges, with vertices $v_1,\dots,v_N$. We order the vertices so that $\deg(v_1)\geq \dotsb \geq \deg(v_N)$. We may assume that $G$ has no isolated vertices, so that $N \leq 2q$.

	We fix $s =  k/3$. For $0 \leq i \leq s$, let $V_i = \{v_{in/10+1},\dots,v_{(i+1)n/10}\}$. Thus, $V_0$ consists of the $n/10$ vertices of $G$ of highest degree, $V_1$ consists of the next $n/10$ vertices of highest degree, and so on. For every $1\leq i\leq s$, every vertex in $V_i$ has degree at most
	\[
		\deg(v_{in/10}) \leq \frac{2q}{in/10} = \frac{20q}{in} = \frac{k2^k n}{60i} = \frac{s 2^k n }{20i} \eqqcolon D_i.
	\]
	Let $U = V(G) \setminus(V_0\cup \dotsb \cup V_s)$. On $V_0\cup \dots \cup V_s$, we use a Tur\'an coloring: we color all edges inside each $V_i$ red and all edges between $V_i$ and $V_j$ blue for all $0 \leq i <j \leq s$. All remaining edges are colored randomly, as follows. First, every edge inside $U$ is colored red or blue with probability $\frac 12$. Second, every edge between $V_i$ and $U$ is colored red with probability $p_i = \frac 12 (i/s)^{1/k}$ and blue with probability $1-p_i$. All these random choices are made independently. Note that since $p_0=0$, the edges between $V_0$ and $U$ are actually colored blue deterministically.

	If $Q$ is a copy of $K_k$ in $G$, let $\ext(Q)$ denote the number of extensions of $Q$ to a monochromatic $K_{k+1}$; equivalently, $\ext(Q)$ is the number of pages in the largest monochromatic book whose spine is $Q$. Note that $\ext(Q)=0$ if $Q$ is not itself monochromatic. We next argue that $\E[\ext(Q)] \leq n/2$ for every choice of $Q$. We then use the Chernoff bound and the union bound to conclude that with positive probability there is no monochromatic $B_n\up k$ (and, thus, that there is a coloring with no monochromatic $B_n\up k$).

	First, we deal with red books. Let $Q$ be a copy of $K_k$ in $G$, which we think of as a potential spine of a red book. Suppose first that $Q \cap V_0 \neq \varnothing$. Since $B_{n/2}\up k$ is connected and all edges between $V_0$ and $V(G) \setminus V_0$ are blue, any red copy of $B_{n/2}\up k$ with a vertex in $V_0$ must actually be entirely contained within $V_0$. But $\ab{V_0} = n/10<n/2$, so any red $B_{n/2}\up k$ cannot intersect $V_0$. 

	Next, suppose that $Q \cap V_i \neq \varnothing$ for some $i \in [s]$. Since all edges between $V_i$ and $V_j$ are blue for $i\neq j$, any red book whose spine is $Q$ must lie in $V_i \cup U$. Let $r = \ab{V_i \cap Q}$ be the number of vertices in the spine which lie in $V_i$. For a vertex $v \in V_i \setminus Q$ which is adjacent in $G$ to every vertex of $Q$, the probability that $v$ forms a red extension of $Q$ equals $p_i^{k-r}$. Similarly, for $u \in U \setminus Q$ adjacent to all vertices of $Q$, the probability that $u$ forms a red extension of $Q$ is $p_i^r \cdot (\frac 12)^{k-r} = 2^{-k}(2p_i)^r$. If $r < k$, then the number of choices for such a vertex $u \in U \setminus Q$ is at most $D_s$, since any such vertex must be a common neighbor of the non-empty set $U \cap Q$ and so, in particular, must lie in the neighborhood of some fixed vertex in $U$. Therefore, by the definitions of $p_i$ and $D_i$,
	\[
		\E[\ext(Q)] \leq p_i^{k-r} \ab{V_i} + 2^{-k} (2p_i)^r D_s \leq \frac n{10}+ \left(\frac is\right)^{r/k}\cdot\frac{n}{20} \leq \left(\frac1{10}+\frac{1}{20}\right)n \leq \frac n2.
	\]
If, instead, $r = k$, then there are most $D_i$ choices for a vertex which is joined to every vertex of $Q$, so that	
	\[
		\E[\ext(Q)] \leq \ab{V_i} + p_i^k D_i \leq \frac n{10}+ \frac is \cdot\frac{sn}{20i} = \left(\frac1{10}+\frac{1}{20}\right)n \leq \frac n2.
	\]

	Finally, we consider the case where $Q \subseteq U$. Since every vertex of $U$ has degree at most $D_s$, there are at most $D_s$ common neighbors of $Q$ in $U$, each of which forms a red extension of $Q$ with probability $2^{-k}$. Moreover, for all $i \in [s]$, each common neighbor of $Q$ in $V_i$ forms a red extension of $Q$ with probability $p_i^k$. Therefore,
	\[
		\E[\ext(Q)] \leq 2^{-k}D_s+ \sum_{i=1}^s p_i^k\ab{V_i} = \frac{n}{20} + \frac{n}{10}\sum_{i=1}^s 2^{-k} \frac is \leq \left(\frac{1}{20} + \frac{s}{10\cdot 2^k}\right)n \leq \frac n2,
	\]
	since $s \leq k \leq 2^k$.
	This shows that for every choice of $Q$ the expected number of red extensions of $Q$ is at most $n/2$.

	Next, we deal with blue books, so let $Q$ be a potential spine of a blue book. Since each $V_i$ is colored monochromatically in red, $Q$ contains at most one vertex from each $V_i$. Let $R = \{0 \leq i \leq s: V_i \cap Q \neq \varnothing\}$ be the set of $i$ such that $Q$ contains a vertex from $V_i$ and let $r=\ab R$. Then all the pages of a blue book whose spine is $Q$ must lie in $U \cup \bigcup_{j \notin R} V_j$. Since $r \leq s+1 <k$, at least one vertex of $Q$ must be in $U$, so the number of common neighbors of $Q$ in $U$ is at most $D_s$. Each of these common neighbors forms a blue extension of $Q$ with probability $2^{r-k}\prod_{i \in R}(1-p_i)$. Additionally, for $j \notin R$, the probability that a common neighbor of $Q$ in $V_j$ yields a blue extension of $Q$ equals $(1-p_j)^{k-r}$. Therefore,
	\begin{align}\label{eq:blue-exts}
		\E[\ext(Q)] &\leq D_s 2^{r-k} \prod_{i \in R} (1-p_i) + \sum_{j \notin R} (1-p_j)^{k-r} \ab{V_j} = \frac{n}{20} \prod_{i \in R} (2-2p_i) + \frac{n}{10} \sum_{j \notin R} (1-p_j)^{k-r}.
	\end{align}
	By definition, we see that $0 \leq p_i \leq \frac 12$ for all $i$. Moreover, for $j \geq 1$, we have that 
	\[
		p_j \geq \frac 12 \left(\frac 1s\right)^{1/k} \geq \frac 12 \left(\frac{1}{2^k}\right)^{1/k} = \frac 14,
	\]
	since $s \leq k \leq 2^k$. Additionally, $r \leq s+1 \leq k/2$, so $k-r \geq k/2$. Therefore,
	\[
		\sum_{j \notin R} (1-p_j)^{k-r} \leq \sum_{j \notin R} (1-p_j)^{k/2} \leq 1 + \sum_{j=1}^s (1-p_j)^{k/2} \leq 1 +\sum_{j=1}^s \left(\frac 34\right)^{k/2} \leq 1 + k\cdot \left(\frac 34\right)^{k/2} \leq 2,
	\]
	since $k(3/4)^{k/2} \leq 1$ for $k \geq 25$. This shows that the second term in (\ref{eq:blue-exts}) is at most $n/5$. For the first term, we recall that since $p_i \leq \frac 12$, every term in the product is at least $1$. Therefore, this product is maximized when $R = \{0,1,\dots,s\}$. In that case, we have
	\[
		\prod_{i=0}^s (2-2p_i) = 2^{s+1} \exp\left(\sum_{i=0}^s \ln(1-p_i)\right) = 2^{s+1}\exp \left(\sum_{i=0}^s \ln \left(1- \frac 12 \left(\frac is\right)^{1/k}\right)\right).
	\]
	To estimate this sum, we use the fact that the function $f(x) = \ln(1- \frac 12 (\frac xs)^{1/k})$ is decreasing to write
	\[
		\sum_{i=0}^s f(i)=\sum_{i=1}^s f(i) = \sum_{i=1}^{s}\int_{i-1}^{i} f(i)\,\dd x \leq \sum_{i=1}^s \int_{i-1}^i f(x)\,\dd x = \int_0^s f(x)\,\dd x.
	\]
	For this integral, we change variables to $y = x/s$ and $\dd y = \dd x/s$ to find that
	\[
		\int_0^s f(x)\,\dd x = \int_0^s \ln \left(1- \frac 12 \left(\frac xs\right)^{1/k}\right)\dd x = s\int_0^1 \ln \left(1- \frac{y^{1/k}}{2}\right)\dd y \leq s\left(\frac 1k-\ln 2\right),
	\]
	where the inequality is from \cref{lem:beta-integral}. Putting all this together, we see that
	\[
		\prod_{i \in R} (2-2p_i) \leq \prod_{i=0}^s (2-2p_i) \leq 2^{s+1} \exp\left(s\left(\frac 1k -\ln 2\right)\right) = 2 e^{1/3}.
	\]
	Thus, the first term in (\ref{eq:blue-exts}) is at most $\frac{e^{1/3}}{10}n<\frac n5$. 
	Plugging all this into (\ref{eq:blue-exts}), we conclude that for every choice of a potential blue spine $Q$,
	\[
		\E[\ext(Q)] \leq \frac{n}{5} + \frac n5 \leq \frac n2.
	\]

	Thus, for every $k$-tuple $Q$ of vertices, the expected number of pages in a monochromatic book whose spine is $Q$ is at most $n/2$. The random variable $\ext(Q)$ can be expressed as the sum of independent $\{0,1\}$-valued random variables. Therefore, \cref{lem:chernoff} implies that, for any fixed $Q$, we have $\pr(\ext(Q) \geq n) \leq e^{-n/6}$. Recalling that the number of vertices in $G$ is $N \leq 2q$, we see that the total number of choices for $Q$ is at most
	\[
		\binom Nk \leq N^k \leq (k2^k n^2)^k \leq \exp (k^2 +2k\ln n),
	\]
	since $k 2^k<e^k$ for $k\geq 25$. If $n \geq 300k^2$, then $\frac n6 - k^2 \geq \frac n8$ and $\ln n \leq \sqrt n$, so
	\[
		k^2 + 2k\ln n - \frac n6 \leq 2k\sqrt n - \frac n8 < 0,
	\]
	since $\sqrt n >16k$. Therefore, taking a union bound over all choices of $Q$, we see that, with positive probability, none of them will have at least $n$ monochromatic extensions, showing that there is a coloring of $G$ with no monochromatic $B_n\up k$.
\end{proof}

By being slightly more careful in the proof of \cref{thm:books-lb}, we can show that the bound already holds when $n\geq C k \log k$ for an appropriate constant $C$. To accomplish this, we count the expected number of $k$-cliques $Q$ in $G$ that extend to a monochromatic $B_n\up k$ as 
\[
	\sum_Q \pr(Q\text{ is monochromatic}) \pr(\ext(Q) \geq n \mid Q \text{ is monochromatic}).
\]
We then split this sum according to how many vertices of $Q$ are in $U$. Indeed, if we let $t$ be the number of vertices in $Q \cap U$, then Lov\'asz's version of the Kruskal--Katona theorem \cite[Exercise 13.31]{MR2321240} tells us that, provided $t \geq 2$, the number of ways of choosing the $t$ vertices of $Q$ in $U$ is at most $(2q)^{t/2}/t!$. By using this bound and carefully estimating the probability that $Q$ is monochromatic, one can obtain the claimed result.

However, such effort may be for naught, because this bound may not be optimal in this regime. Indeed, $\rhat(B_n\up k) \geq \rhat(K_k) = \binom{r(K_k)}2$. If the Ramsey number $r(k)$ is at least $(\sqrt 2+\varepsilon)^k$ for some fixed $\varepsilon>0$, as some have conjectured, then the lower bound in \cref{thm:books-lb} is not sharp for $n=2^{o(k)}$.

This lower bound argument points towards which host graph to use for the upper bound. Namely, it should have roughly $kn$ vertices of very high degree, each around $2^k n$, while all other vertices should have substantially lower degree. This suggests taking the host graph to be a large book, with a spine of order around $kn$ and about $2^k n$ pages, which is indeed what we do. However, the proof that such a large book is indeed Ramsey for $B_n\up k$ is quite involved, requiring substantial input from recent work on Ramsey numbers of books \cite{MR4115773,CFW,CFW21}. This proof is the content of the next subsection.

\subsection{The upper bound}
For the upper bound in \cref{thm:books}, we need to recall some facts related to Szemer\'edi's regularity lemma. Given vertex subsets $X,Y$ in a graph $G$, we denote by $e(X,Y)$ the number of pairs in $X\times Y$ that are edges of $G$ and define the \emph{edge density} by $d(X,Y) = e(X,Y)/(\ab X \ab Y)$. We write $d(X)$ if $X=Y$ and $d(x,Y)$ if $X=\{x\}$ is a singleton. A pair $(X,Y)$ is called \emph{$\varepsilon$-regular} if, for all $X' \subseteq X, Y' \subseteq Y$ with $\ab{X'} \geq \varepsilon \ab X, \ab{Y'}\geq \varepsilon \ab Y$, we have $\ab{d(X',Y')-d(X,Y)}\leq \varepsilon$. Similarly, we say that a set $X$ is $\varepsilon$-regular if the pair $(X,X)$ is $\varepsilon$-regular. A partition of the vertex set of a graph is called \emph{equitable} if all the parts have orders that differ by at most $1$. We use the following strengthened version of Szemer\'edi's regularity lemma, proved in \cite[Lemma 2.1]{CFW}.

\begin{lem}\label{lem:regularity}
	For every $\varepsilon>0$ and $M_0 \in \mathbb N$, there is some $M=M(\varepsilon,M_0) \geq M_0$ such that, for every graph $G$, there is an equitable partition $V(G)=V_1 \sqcup \dotsb \sqcup V_m$ into $M_0 \leq m \leq M$ parts such that the following hold: 
	\begin{enumerate}
		\item Each part $V_i$ is $\varepsilon$-regular and
		\item For every $1 \leq i \leq m$, there are at most $\varepsilon m$ values $1 \leq j \leq m$ such that the pair $(V_i,V_j)$ is not $\varepsilon$-regular.
	\end{enumerate}
\end{lem}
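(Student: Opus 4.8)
The plan is to bootstrap from the usual Szemer\'edi regularity lemma, which for any $\delta>0$ and $k_0\in\mathbb N$ produces, in every graph, an equitable partition into $k_0\le k\le K(\delta,k_0)$ parts with at most $\delta k^2$ irregular pairs. Relative to that, two upgrades are needed: the ``few irregular pairs'' bound must hold \emph{per index} rather than on average, and \emph{each} part must be internally $\varepsilon$-regular. The per-index condition will be arranged by a routine cleanup step, while internal regularity requires augmenting the energy-increment machinery. Throughout I would fix a small auxiliary parameter $\delta=\delta(\varepsilon)\ll\varepsilon$ (polynomially small, e.g.\ $\delta=\varepsilon^4/10$ suffices for the estimates below) and apply the regularity lemma with lower bound $M_0':=2M_0$.

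\textbf{Internal regularity.} Run the standard energy-increment argument with one change: define the energy $q(\mathcal P)=\sum_{i,j}\tfrac{|V_i|\,|V_j|}{n^2}d(V_i,V_j)^2$ to \emph{include} the diagonal terms $(V_i,V_i)$; it still lies in $[0,1]$ and is non-decreasing under refinement. At each stage, as long as either $\ge\delta k^2$ off-diagonal pairs are irregular or $\ge\delta k$ parts are internally irregular, refine every offending part simultaneously, using for each irregular pair a subset of each endpoint witnessing irregularity, and for each internally-irregular part $V_i$ two subsets $A,B\subseteq V_i$ with $|A|,|B|\ge\varepsilon|V_i|$ and $|d(A,B)-d(V_i)|>\varepsilon$. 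By the defect form of the Cauchy--Schwarz inequality, each refinement increases $q$ by a definite amount (localized in the relevant diagonal or off-diagonal block). Carried out with the right bookkeeping --- this is the technical heart, done in \cite[Lemma 2.1]{CFW} --- the process halts with an equitable partition into $m_1\le M_1(\varepsilon,M_0)$ parts in which fewer than $\delta m_1$ parts are internally irregular and fewer than $\delta m_1^2$ off-diagonal pairs are irregular.

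\textbf{Cleanup.} In this partition, call an index $i$ \emph{bad} if $V_i$ is internally irregular, or if $(V_i,V_j)$ is irregular for more than $\tfrac{\varepsilon}{2}m_1$ values of $j$. Fewer than $\delta m_1$ indices are bad of the first kind, and (since there are at most $\delta m_1^2$ irregular pairs) fewer than $4\delta m_1/\varepsilon$ of the second kind, so taking $\delta$ small enough in terms of $\varepsilon$ there are fewer than $\alpha m_1$ bad indices, where $\alpha:=\varepsilon^3$. Delete the bad parts and redistribute their vertices as evenly as possible among the $m:=m_1-\#\{\text{bad}\}\ge(1-\alpha)m_1\ge M_0$ surviving parts, which grows each surviving part by a factor at most $1+2\alpha$. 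Now apply the standard perturbation estimate --- a $\delta$-regular pair stays $\varepsilon$-regular after a $(1+2\alpha)$-factor blow-up of each side, provided $\delta$ is small enough in terms of $\varepsilon$ --- to the surviving diagonal pairs (so every new part is internally $\varepsilon$-regular) and to the surviving off-diagonal pairs (so no new irregular pairs are created, and each new part is in at most $\tfrac{\varepsilon}{2}m_1\le\varepsilon m$ irregular pairs). A final rebalancing moving $O(1)$ vertices between parts restores equitability without affecting regularity, and $M_0\le m\le m_1\le M_1(\varepsilon,M_0)=:M(\varepsilon,M_0)$, as required.

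\textbf{Main obstacle.} The delicate point is the internal-regularity step. For off-diagonal irregularity the usual energy increment works cleanly because $\ge\delta k^2$ irregular pairs force an absolute-constant energy gain; but a single internally-irregular part occupies only a $1/k^2$ fraction of the energy, so eliminating internal irregularity in a $\delta$-fraction of the parts contributes only $\mathrm{poly}(\varepsilon,\delta)/k$ to $q$, and the bounded total energy does not, by itself, prevent this from recurring indefinitely as the partition grows. Making the argument terminate with the number of parts bounded in terms of $\varepsilon$ and $M_0$ alone --- exploiting that every refinement round, off-diagonal or internal, multiplies the number of parts, alongside the $O(\delta^{-5})$ bound on the off-diagonal rounds --- is precisely the crux, and is exactly what \cite[Lemma 2.1]{CFW} establishes; everything else (the cleanup, the perturbation estimates, the equitability adjustments) is routine.
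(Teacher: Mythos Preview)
The paper offers no proof of this lemma; it simply quotes \cite[Lemma~2.1]{CFW}. Your proposal is more detailed than that, and your cleanup step---deleting the $O(\delta m_1/\varepsilon)$ indices that are internally irregular or lie in more than $\tfrac{\varepsilon}{2}m_1$ irregular pairs, redistributing their vertices, and invoking a standard perturbation estimate---is correct and is the usual device for upgrading an average bound on irregular pairs to a per-index one.

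Where your sketch falls short is the energy-increment step. You correctly diagnose the obstacle: refining the internally irregular parts increases the diagonal-inclusive energy by only $\Omega_\varepsilon(1/k)$ per round, and since $k$ grows under refinement the bound $q\le 1$ does not by itself force termination. But you then defer this crux to \cite[Lemma~2.1]{CFW}, which is precisely the statement under discussion, so the argument as written is circular rather than a proof. The standard route to internal regularity is not a direct diagonal energy increment but a strong-regularity-lemma iteration (as in Alon--Fischer--Krivelevich--Szegedy): one repeatedly applies the ordinary regularity lemma to obtain nested equitable partitions $\mathcal P\preceq\mathcal Q$ with $\mathcal Q$ an $f(\lvert\mathcal P\rvert)$-regular refinement and $q(\mathcal Q)\le q(\mathcal P)+\varepsilon$; the small energy gap forces, for most cells $P\in\mathcal P$, the pairwise densities among the $\mathcal Q$-subcells inside $P$ to cluster near $d(P)$, and combined with the extreme regularity of $\mathcal Q$ this makes $P$ internally $\varepsilon$-regular. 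That argument, followed by your cleanup, yields the lemma.
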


To complement the regularity lemma, we have the following consequence of the counting lemma, proved in \cite[Corollary 2.6]{CFW}. It is designed to count monochromatic extensions of cliques and thus estimate the size of monochromatic books. Given a copy $Q$ of $K_k$ in some graph $G$, we say that $u \in V(G)$ \emph{extends} $Q$ if $u$ is a common neighbor of all $k$ vertices of $Q$.

\begin{lem}\label{lem:randomclique}
	Fix $k \geq 2$ and let $\varepsilon,\delta \in (0,\frac 12)$ be parameters with $\varepsilon \leq \delta^{3k^2}$. 
	Let $X$ be a set of vertices in a graph $G$ and suppose that $X$ is $\varepsilon$-regular with edge density at least $\delta$. Then $X$ contains at least one $K_k$. Moreover, if $Q$ is a randomly chosen copy of $K_k$ in $X$, then, for any $u \in V(G)$,
	\begin{equation*}
		\pr(u \text{ extends }Q) \geq  d(u,X)^k- \delta.
	\end{equation*}
\end{lem}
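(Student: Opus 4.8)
The plan is to prove \cref{lem:randomclique} by a standard application of the counting lemma together with an averaging argument, and then to boost the conclusion into a statement about extensions by a clever but routine computation. First I would recall the key consequence of $\varepsilon$-regularity with $\varepsilon$ small relative to $\delta$: inside an $\varepsilon$-regular set $X$ of density at least $\delta$, one can greedily find, for any fixed vertex $v\in V(G)$, many copies of $K_k$ all of whose vertices lie in the neighborhood of $v$ inside $X$ — provided $v$ has density at least roughly $\delta$ to $X$. More precisely, I would invoke the counting lemma in the following shape: if $X$ is $\varepsilon$-regular with density $d(X)\ge\delta$ and $\varepsilon\le\delta^{3k^2}$, then the number of copies of $K_k$ in $X$ is $(1\pm o(1))\binom{|X|}{k}d(X)^{\binom k2}$, and more generally, for any vertex $u$, the number of copies of $K_k$ in $X\cap N(u)$ is at least $(d(u,X)-\text{something small})^k\cdot(\text{total }K_k\text{ count in }X)$, up to lower-order corrections controlled by $\varepsilon$. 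The first sentence of the conclusion — that $X$ contains at least one $K_k$ — is then immediate, since the count is positive.

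For the main inequality, I would set up the probability as a ratio: if $Q$ is a uniformly random $K_k$ in $X$, then $\pr(u\text{ extends }Q)$ equals the number of copies of $K_k$ in $X$ all of whose vertices are neighbors of $u$, divided by the total number of copies of $K_k$ in $X$. The numerator is the number of $K_k$'s inside the set $X':=X\cap N(u)$; if $d(u,X)\ge\varepsilon$ (say) then $|X'|\ge\varepsilon|X|$, so by $\varepsilon$-regularity the density $d(X')$ is within $\varepsilon$ of $d(X)\ge\delta$, hence still at least $\delta-\varepsilon\ge\delta^{3k^2}$-ish, and the counting lemma applies to $X'$ as well. Then numerator $\ge (1-o(1))\binom{|X'|}{k}(d(X)-\varepsilon)^{\binom k2}$ and denominator $\le(1+o(1))\binom{|X|}{k}(d(X)+\varepsilon)^{\binom k2}$, and the ratio is at least $(|X'|/|X|)^k\cdot(1-o(1))(d(X)-\varepsilon)^{\binom k2}/(d(X)+\varepsilon)^{\binom k2}\ge d(u,X)^k-\delta$, where the final slack of $\delta$ absorbs all the $\varepsilon$-order error terms (this is exactly why the hypothesis $\varepsilon\le\delta^{3k^2}$ is strong enough — it swamps a polynomial-in-$k$ number of factors each of relative size $1\pm O(\varepsilon/\delta)$). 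The edge case $d(u,X)<\varepsilon$ is trivial since then $d(u,X)^k-\delta<0$ and the probability is nonnegative; I would dispose of it at the outset.

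The main obstacle, and the only part requiring genuine care, is bookkeeping the error terms in the two applications of the counting lemma so that they collectively contribute at most $\delta$ to the final bound — in particular making sure that the hypothesis $\varepsilon\le\delta^{3k^2}$ is actually what one needs, rather than some slightly different power, and that the set $X'$ still satisfies the hypotheses of the counting lemma (density bounded below, size not too small, $\varepsilon$-regularity inherited). Since \cref{lem:randomclique} is quoted from \cite[Corollary 2.6]{CFW}, I would in fact cite that reference for the detailed verification rather than reproduce the counting-lemma computation in full; the proof proposal above is the route one would take if proving it from scratch. The one genuinely substantive point to highlight is the passage from ``$X$ is $\varepsilon$-regular'' to ``$X\cap N(u)$ is still regular-enough and dense-enough'', which uses the defining property of $\varepsilon$-regularity applied to the subset $X'\subseteq X$ (and to $X'$ as a subset of itself), valid precisely because $|X'|\ge\varepsilon|X|$ in the non-trivial case.
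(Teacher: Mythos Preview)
The paper does not prove this lemma; it is quoted verbatim from \cite[Corollary~2.6]{CFW}, as you yourself observe. Your sketch is the standard route and is essentially what that reference does: count copies of $K_k$ in $X$ and in $X'=X\cap N(u)$ via the counting lemma, and take the ratio.

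One small correction to your bookkeeping: the threshold $d(u,X)\ge\varepsilon$ you choose for the ``non-trivial'' case is too weak for the regularity-inheritance step you invoke. If all you know is $|X'|\ge\varepsilon|X|$, then the regularity parameter $X'$ inherits from $X$ is on the order of $\varepsilon/\varepsilon=1$, which is useless for applying the counting lemma inside $X'$. The correct cutoff is $d(u,X)\ge\delta^{1/k}$: below it the claimed inequality is vacuous since $d(u,X)^k<\delta$, and above it $|X'|\ge\delta^{1/k}|X|$, so $X'$ inherits regularity with parameter at most $\varepsilon\cdot\delta^{-1/k}\le\delta^{3k^2-1/k}$, which is still tiny enough to run the counting lemma on $X'$ with total error absorbed by the final $\delta$. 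With that adjustment (and the analogous one anywhere you need a large-enough subset to inherit regularity), your outline goes through.
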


The final tool we need concerns good configurations, which we now define. 

\begin{Def}
	Let $k \geq 2$ and $\varepsilon,\delta \in (0,1)$ be parameters and let $C_1,\dots,C_k$ be disjoint vertex subsets in a graph $G$ whose edges have been two-colored. We say that $C_1,\dots,C_k$ form a \emph{$(k,\varepsilon,\delta)$-good configuration} if the following conditions hold: 
	\begin{itemize}
		\item 
		$C_1\cup \dotsb \cup C_k$ induces a complete subgraph of $G$, meaning that there is an edge of $G$ between any two vertices of $C_1 \cup \dotsb \cup C_k$.
		\item  Each $C_i$ is $\varepsilon$-regular in red and has internal red density at least $\delta$. 
		\item Each pair $(C_i,C_j)$ for $i \neq j$ is $\varepsilon$-regular in blue and has blue density at least $\delta$.
	\end{itemize}
\end{Def}

The following lemma, which shows that colorings which contain good configurations also contain large monochromatic books,
was arguably the main idea in \cite{MR4115773}, but was first stated explicitly in~\cite{CFW}. The statement differs slightly from that in \cite[Lemma 3.3]{CFW}, but it is easy to check that the proof carries through.

\begin{lem}\label{lem:good-config-suffices}
	Fix $k \geq 2$, $0<\delta \leq 2^{-k-1}$, and $0<\varepsilon \leq \delta^{3k^2}$. Suppose that the edges of a graph $G$ have been two-colored and let $C_1,\dots,C_k$ be a $(k,\varepsilon,\delta)$-good configuration in $G$. If the vertices in $C_1\cup \dotsb \cup C_k$ have $t$ common neighbors in $G$, then the coloring contains a monochromatic $B_{2^{-k-1}t}\up k$.
\end{lem}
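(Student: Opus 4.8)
The plan is to build a monochromatic $B_{2^{-k-1}t}\up k$ out of the good configuration $C_1,\dots,C_k$ together with the set $W$ of $t$ common neighbors of $C_1\cup\dotsb\cup C_k$. The spine of the book will consist of one vertex from each $C_i$, chosen appropriately, and we will argue that a $2^{-k-1}$-fraction of $W$ extends this spine in a single color. The key point is that every vertex $w \in W$ is adjacent to all $k$ vertices of any spine we choose (since $w$ is a common neighbor of all the $C_i$), so the only question is the \emph{color pattern} on the $k$ edges from $w$ to the spine. Thus the problem reduces to choosing the spine so that many vertices of $W$ see it monochromatically.

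The main step is a random choice: pick $Q_i$ to be a uniformly random $K_k$ inside $C_i$ (valid since each $C_i$ is $\varepsilon$-regular with density at least $\delta$, by \cref{lem:randomclique}) — wait, more precisely, we want the spine to be a single transversal, so instead pick a uniformly random vertex $c_i \in C_i$, or better, for each $i$ pick a random $K_k$ in $C_i$ and then use \cref{lem:randomclique} to control, for each $w\in W$, the probability that $w$ extends that clique. Actually the cleanest route: apply \cref{lem:randomclique} with $X = C_i$ to get a random clique $Q^{(i)}$ in $C_i$ such that for each $w\in W$, $\pr(w \text{ extends } Q^{(i)}) \geq d(w,C_i)^k - \delta$ in red, and symmetrically $\pr(w \text{ extends } Q^{(i)} \text{ in blue within } C_i)$... hmm, but the spine should be one vertex per $C_i$, not a clique per $C_i$. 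Let me restructure: the correct reading is that the spine is built by taking, from each $C_i$, a set of vertices — no. Let me reconsider: $C_1\cup\dotsb\cup C_k$ induces a complete graph, and we want a spine $Q$ which is a $K_k$; the natural choice is a transversal, one vertex $q_i\in C_i$. For $w \in W$, the edge $wq_i$ is red with "probability" roughly $d_{\text{red}}(w,C_i)$ if $q_i$ is random in $C_i$, using $\varepsilon$-regularity of the pair $(\{w\},C_i)$ — but a single vertex is too small for regularity to bite. This is exactly why \cref{lem:randomclique} is phrased with a clique $Q$ rather than a single vertex: we should take $Q$ to be a random $K_k$ \emph{in a single regular set} and extend within that set. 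So the real construction must be: the whole spine lives essentially inside the configuration via a different mechanism.

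Here is the construction I would actually use. For each $w\in W$, let $\rho_i(w)=d(w,C_i)$ be the density of red edges (no — edges, the graph is $G$; colors come separately). Since each $w$ is joined to all of $C_i$ in $G$, partition $C_i$ into its red-neighborhood $C_i^{\mathrm{r}}(w)$ and blue-neighborhood $C_i^{\mathrm{b}}(w)$ of $w$. For a random $K_k$ clique $Q^{(i)} \subseteq C_i$, \cref{lem:randomclique} applied to the red graph on $C_i$ gives $\pr(Q^{(i)} \text{ all red-adjacent to } w) \geq d_{\mathrm r}(w,C_i)^k - \delta$, provided the red subgraph of $C_i$ is $\varepsilon$-regular with density $\geq\delta$ — which is exactly the good-configuration hypothesis. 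So: independently choose a random red-$K_k$ — no wait, $Q^{(i)}$ should be a clique of $G$, and we need it red-monochromatic? No. I will take $Q$ to be a single random $K_k$ inside $C_1$ (say), red clique; then every other $C_j$ is blue-dense to $C_1$, so... this still doesn't give a $k$-vertex spine distributed one-per-part.

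Let me commit to the version that works: \textbf{the spine is a random $K_k$ clique $Q$ chosen inside $C_1$}, using that $C_1$ is $\varepsilon$-regular with red density $\geq\delta\geq$ threshold, so \cref{lem:randomclique} yields, for each common neighbor $w$, $\pr(w\text{ extends }Q)\geq d(w,C_1)^k-\delta$. But we need $w$ joined to all of $Q$ and the edges $wq$ the right color — and $w$'s common-neighbor status is in $G$, so adjacency holds; we want them all red, which \cref{lem:randomclique} on the red graph gives with probability $\geq d_{\mathrm r}(w,C_1)^k-\delta$. Summing over $w\in W$: $\E[\#\{w: w\text{ red-extends }Q\}]\geq \sum_w d_{\mathrm r}(w,C_1)^k - \delta t$. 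If $\sum_w d_{\mathrm r}(w,C_1)^k$ is large we win with a red book with spine in $C_1$; if it is small, then for most $w$, $d_{\mathrm r}(w,C_1)$ is small, hence $d_{\mathrm b}(w,C_1)$ is close to $1$, so $w$ is blue-joined to almost all of $C_1$ — and then repeating the argument choosing a \emph{blue} spine transversal across $C_1,\dots,C_k$ (blue-dense between parts), a fixed blue $K_k$ transversal is blue-extended by $w$ with probability $\approx\prod_i d_{\mathrm b}(w,C_i)^{(\cdot)}$, which is large for most $w$. The \textbf{main obstacle} is bookkeeping the case split cleanly — showing that the "red is rare" failure of one case feeds exactly the hypothesis needed ("blue is abundant") for the other, across all $k$ parts simultaneously, while keeping the error terms $\delta, \delta^{3k^2}$ under control so that in the winning case at least $2^{-k-1}t$ vertices of $W$ survive. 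I expect the final accounting to mirror the proof of \cite[Lemma 3.3]{CFW} closely, with the threshold $2^{-k-1}$ arising from balancing the $2^k$ possible color-patterns on the $k$ spine-edges against the loss of $\delta\leq 2^{-k-1}$ per application, invoking \cref{lem:randomclique} at most $k$ times and taking a majority color pattern among the $t$ common neighbors.
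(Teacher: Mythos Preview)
First, note that the paper does not give a proof of this lemma: it cites \cite[Lemma~3.3]{CFW} and asserts that the argument there carries over to the slightly different statement here. Your proposal ends the same way, expressly deferring the ``final accounting'' to \cite[Lemma~3.3]{CFW}.

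What precedes that deferral is not a proof but scratch work --- a sequence of abandoned attempts (``wait, more precisely\dots'', ``hmm, but\dots'', ``no. Let me reconsider\dots'') that eventually settles on the correct two-pronged shape: either a random red $K_k$ inside some $C_i$ (via \cref{lem:randomclique}) has at least $2^{-k-1}t$ red extensions among the $t$ common neighbors, or a blue transversal $K_k$ (one vertex per part) has at least $2^{-k-1}t$ blue extensions. Two genuine gaps remain. First, you commit the red case to $C_1$ alone; but if $\sum_w d_R(w,C_1)^k$ is small, that only makes $d_B(w,C_1)$ typically large, not the full product $\prod_i d_B(w,C_i)$ needed for the blue case. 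The red attempt has to be made in every $C_i$, with the blue case entered only when all $k$ fail --- your closing phrase ``invoking \cref{lem:randomclique} at most $k$ times'' hints at this but contradicts the earlier commitment. Second, the blue transversal step requires a multipartite analogue of \cref{lem:randomclique} (a random blue $K_k$ with one vertex in each $C_i$, extending to $w$ with probability at least $\prod_i d_B(w,C_i)-\delta$), and you neither state it nor indicate how the pairwise $\varepsilon$-regularity of the $(C_i,C_j)$ supplies it. Without these two pieces --- and without the clean dichotomy that links them --- there is no way to recover the $2^{-k-1}$ constant under the hypothesis $\delta\le 2^{-k-1}$.
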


With these preliminaries, we are ready to prove the upper bound in \cref{thm:books}.

\begin{thm}
	For every $k \geq 2$ and every sufficiently large $n$, $\rhat(B_n\up k) \leq k2^{k+3}n^2$.
\end{thm}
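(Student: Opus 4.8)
The plan is to take the host graph $G$ to be a large book: a spine $S$ of order roughly $Ckn$ (for a suitable constant $C$) joined completely to a set $P$ of about $2^{k+1}n$ pages, where moreover $S$ itself induces a complete graph. Such a $G$ has $\Theta(k^2n^2 + kn\cdot 2^k n) = \Theta(k 2^k n^2)$ edges, matching the target bound $k2^{k+3}n^2$ up to the constant, so it suffices to prove that this $G$ is Ramsey for $B_n\up k$. Fix a two-coloring of $E(G)$. The strategy is to locate, inside the spine $S$, a $(k,\varepsilon,\delta)$-good configuration $C_1,\dots,C_k$ with $\delta = 2^{-k-1}$ and $\varepsilon = \delta^{3k^2}$ whose common neighborhood is large, and then invoke \cref{lem:good-config-suffices} to extract a monochromatic $B_n\up k$. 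Since every vertex of $P$ is adjacent to all of $S$, any such good configuration inside $S$ automatically has at least $|P| \geq 2^{k+1}n$ common neighbors, and $2^{-k-1}\cdot 2^{k+1}n = n$, which is exactly what we need.

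First I would apply the strengthened regularity lemma (\cref{lem:regularity}) to the two-colored complete graph on $S$, with parameters $\varepsilon$ and a suitably large $M_0$, obtaining an equitable partition $S = V_1 \sqcup \dots \sqcup V_m$ into $m$ parts, each $\varepsilon$-regular, and with few irregular pairs per part. Because $S$ is complete, each pair $(V_i,V_j)$ has red density plus blue density equal to $1$, so at least one of the two colors has density $\geq \tfrac12 > \delta$; the same holds for the internal density of each $V_i$. Now build an auxiliary complete graph on the index set $[m]$, discard the at most $\varepsilon m$ vertices $i$ for which $V_i$ is not $\varepsilon$-regular with too many others (more carefully: delete a small set of indices so that all remaining pairs are $\varepsilon$-regular — this is the standard cleaning step), and two-color the remaining edges $\{i,j\}$ by whichever of red/blue is the majority color of the pair $(V_i,V_j)$; also record for each surviving $i$ whether $V_i$ has red or blue majority internal density. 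We seek $k$ indices $i_1,\dots,i_k$ that are pairwise joined in one color, say blue, in this auxiliary graph, such that each $V_{i_\ell}$ has the opposite color, red, as its majority internal density — this is precisely a good configuration (after possibly swapping the roles of the two colors, which is symmetric). Such a monochromatic-and-correctly-labelled clique of size $k$ exists provided $m$ is large enough in terms of $k$: this follows from Ramsey's theorem applied to a suitable coloring/labelling of $[m]$ — one partitions surviving indices into the "red-internal" and "blue-internal" classes, one of which has $\geq m/2$ indices, and within the larger class runs the usual argument for finding a monochromatic $K_k$ in a $2$-colored complete graph; taking $m \geq r(K_k) \geq M_0$ suffices. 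The regularity lemma guarantees we may take $M_0$, hence $m$, this large.

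Having found a good configuration $C_1,\dots,C_k$ (relabelling colors if necessary so that the $C_i$ are internally red and the pairs are blue), I would apply \cref{lem:good-config-suffices} with $t = $ the number of common neighbors of $C_1\cup\dots\cup C_k$ in $G$. Since $C_1\cup\dots\cup C_k \subseteq S$ and every vertex of $P$ is joined to all of $S$, we have $t \geq |P| \geq 2^{k+1}n$, so the lemma yields a monochromatic $B_{2^{-k-1}t}\up k \supseteq B_n\up k$, completing the proof. The remaining bookkeeping is to choose $|S|$ large enough that, after the regularity lemma subdivides it into $m \geq r(K_k)$ parts and we delete the cleaning set, each surviving part $V_i$ still has positive (indeed large) size — but in fact the good-configuration machinery only needs each $C_i$ to be $\varepsilon$-regular of density $\geq \delta$, not large, so even $|S| = r(K_k)\cdot$(a constant) suffices, which is certainly $O(kn)$ for large $n$; and then $|E(G)| = \binom{|S|}{2} + |S|\cdot|P| \leq k2^{k+3}n^2$ for $n$ large, since the $|S|\cdot|P|$ term dominates and is $O(kn\cdot 2^k n)$.

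The main obstacle I anticipate is the cleaning/counting step needed to pass from the regularity partition to a genuine good configuration: one must carefully handle the irregular pairs and the color-labelling so that Ramsey's theorem produces a clique all of whose $\binom{k}{2}$ pairs are $\varepsilon$-regular blue pairs and all of whose $k$ parts are $\varepsilon$-regular red sets — this requires choosing $M_0$ large enough that, after removing $O(\varepsilon m)$ bad indices, a monochromatic-and-correctly-labelled $K_k$ still survives, and verifying that "majority color density $\geq \tfrac12$" together with $\varepsilon$-regularity indeed certifies density $\geq \delta = 2^{-k-1}$ (which is immediate since $\tfrac12 > 2^{-k-1}$ for $k\geq 2$, even accounting for the $\varepsilon$ slack, as $\varepsilon = \delta^{3k^2}$ is tiny). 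None of these steps is deep, but assembling the quantifiers in the right order — $k$ first, then $\delta$ and $\varepsilon$, then $M_0$ and $m$, then $|S|$ and $|P|$, then $n$ — is where care is required.
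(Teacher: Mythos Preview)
Your argument has a genuine gap at the Ramsey step. After restricting to the larger internal-color class (say the red-internal parts) and applying Ramsey's theorem to the pair-coloring, you obtain a monochromatic $K_k$, but you have no control over \emph{which} color it is. If Ramsey hands you a \emph{red} $K_k$ among the red-internal parts, the resulting collection has internal red density and pairwise red density---this is not a good configuration, and \cref{lem:good-config-suffices} does not apply. Nothing in your outline handles this case: you cannot extract a red book from the pages $P$ (the spine-to-page edges could all be blue on $C_1\cup\dots\cup C_k$), and with only $k$ parts in hand you do not have enough vertices in the spine to serve as $n$ pages either.

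The paper avoids this by replacing the Ramsey step with a Tur\'an-based dichotomy. It defines the reduced graph $F$ on the red-internal parts by joining $v_i$ to $v_j$ precisely when $(V_i,V_j)$ is regular with \emph{blue} density $\geq\delta$. If $F$ has minimum degree exceeding $(1-\tfrac{1}{k-1})m'$, Tur\'an's theorem yields a $K_k$ in $F$, and since edges of $F$ are blue by definition this \emph{is} a good configuration, so \cref{lem:good-config-suffices} applied with the $N=2^{k+1}n$ pages finishes. Otherwise some $V_1$ is regular and red-dense to at least roughly $m'/(k-1)$ other red-internal parts; their union $U$ then has at least $(1/k+2k\delta)K/2$ vertices, and \cref{lem:randomclique} produces a red $K_k$ inside $V_1$ with at least $n$ red extensions \emph{in $U$}---that is, a red $B_n\up k$ lying entirely inside the spine. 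This second branch is what forces the spine to have order $K=2kn$, not merely $r(K_k)$ times a constant as you suggest; your smaller spine would not contain $n$ pages in this case.
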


\begin{proof}
	Let $G = B_N\up K$, where $K = 2kn$ and $N = 2^{k+1}n$, so that $G$ has $\binom K2+NK \leq 2k^2 n^2 + k2^{k+2}n^2 \leq k2^{k+3}n^2$ edges. We claim that $G$ is Ramsey for $B_n \up k$ if $n$ is sufficiently large with respect to $k$. To prove this, fix a red/blue coloring of $E(G)$. Let $\delta = \min \{2^{-k-1}, 1/(10k^3)\}$ and $\varepsilon = \delta^{3k^2}$.

	We apply \cref{lem:regularity} to the red graph on the subgraph of $G$ induced by the spine, with $\varepsilon$ as above and $M_0 = 10k^2$. We obtain an equitable partition $V_1 \sqcup \dotsb \sqcup V_m$ of the spine with $M_0 \leq m \leq M  = M(\varepsilon,M_0)$ parts, where each $V_i$ is $\varepsilon$-regular and, for every $i$, at most $\varepsilon m$ of the pairs $(V_i,V_j)$ are not $\varepsilon$-regular.
	Note that, since the colors are complementary, if a pair is $\varepsilon$-regular in red, then it is also $\varepsilon$-regular in blue.\footnote{This is true if the parts in the pair are disjoint. If not, one can choose a smaller $\varepsilon$ to guarantee the same end.} Call a part $V_i$ \emph{red} if at least half its internal edges are red and \emph{blue} otherwise. Without loss of generality, suppose that $m' \geq m/2$ of the parts are red and reindex so that these red parts are $V_1,\dots,V_{m'}$. We form a reduced graph $F$ with vertices $v_1,\dots,v_{m'}$, connecting $v_i$ to $v_j$ in $F$ if $(V_i,V_j)$ is $\varepsilon$-regular and has blue density at least $\delta$. 

	First suppose that some vertex in $F$, say $v_1$, has degree at most $(1- \frac{1}{k-1})m'$. This means that there are at least $\frac{m'}{k-1}-1$ parts $V_j$ with $2 \leq j \leq m'$ such that $(V_1,V_j)$ is either irregular or has blue density less than $\delta$. By the second condition in \cref{lem:regularity}, the number of irregular pairs $(V_1,V_j)$ is at most $\varepsilon m \leq 2 \varepsilon m'$. Therefore, if we let $J$ be the set of those $2 \leq j \leq m'$ such that $(V_1,V_j)$ is $\varepsilon$-regular and has blue density less than $\delta$, we find that
	\[
		\ab{J} \geq \frac{m'}{k-1} - 1 - 2\varepsilon m' = \left(\frac{1}{k-1}- \frac1 {m'} - 2 \varepsilon\right)m' \geq \left(\frac{1}{k-1} -\frac{2}{5k^2}\right)m' \geq \left(\frac{1}{k} + 3 k\delta\right)m',
	\]
	using the facts that $m' \geq m/2 \geq M_0/2 = 5k^2$ and $\varepsilon \leq \delta \leq 1/(10k^3)$, as well as $\frac{1}{k-1}- \frac{1}{k^2}\geq \frac 1k$.

	Let $U = \bigcup_{j \in J} V_j$. Since the partition is equitable, 
	\[
		\ab U \geq \frac{\ab J}{m}K-m \geq \left(\frac 1k +3k \delta\right)\frac K2 - M = \left(\frac 1k +3k \delta - 2 \frac MK\right) \frac K2 \geq \left(\frac 1k +2k \delta\right) \frac K2,
	\]
	where we used that $m \leq M$ and $2M/K \leq k \delta$ for $n$ (and thus $K$) sufficiently large.

	Recall that $V_1$ is $\varepsilon$-regular and has red density at least $\frac 12 \geq \delta$. Thus, by \cref{lem:randomclique}, if $Q$ is a random red $K_k$ in $V_1$, then, for any $u \in U$, the probability that $u$ extends $Q$ is at least $d_R(u,V_1)^k -\delta$, where $d_R$ denotes edge density in the red graph. Adding this up over all $u \in U$, we find that the expected number of red extensions of $Q$ is at least
	\[
		\sum_{u \in U} \left(d_R(u,V_1)^k - \delta\right) \geq \left(\frac{1}{\ab U}\sum_{u \in U} d_R(u,V_1)\right)^k \ab U - \delta \ab U \geq \left((1- \delta)^k - \delta\right) \ab U \geq (1-2k\delta)\ab U,
	\]
	where the first inequality follows from convexity of the function $x \mapsto x^k$ and the second uses the fact that $d_R(V_1,V_j)\geq 1- \delta$ for every $j \in J$. Plugging in our lower bound for $\ab U$, we find that the expected number of red extensions of $Q$ is at least
	\begin{align*}
		(1-2k \delta) \left(\frac 1k +2k \delta\right) \frac K2 \geq \left(\frac 1k +2k \delta - 2k \delta\right) \frac {2kn}2=n.
	\end{align*}
	Thus, there exists a red $K_k$ in $V_1$ with at least $n$ red extensions, that is, a red $B_n \up k$. 

	Thus, we may assume that every vertex in $F$ has degree greater than $(1- \frac{1}{k-1})m'$. By Tur\'an's theorem, this implies that $F$ contains a $K_k$. Let the vertices of this $K_k$ be $v_{i_1},\dots,v_{i_k}$ and let $C_1 = V_{i_1},\dots,C_k = V_{i_k}$. Then $C_1,\dots,C_k$ form a $(k,\varepsilon,\delta)$-good configuration, since each $C_i$ is $\varepsilon$-regular with red density at least $\frac 12 \geq \delta$ and all pairs $(C_i,C_j)$ with $i \neq j$ correspond to edges of $F$ and so are $\varepsilon$-regular with blue density at least $\delta$. Moreover, the vertices of $C_1 \cup \dotsb \cup C_k$ all lie in the spine of $G$ and, hence, they have $N = 2^{k+1}n$ common neighbors. Thus, by \cref{lem:good-config-suffices}, our coloring must contain a monochromatic $B_n \up k$.
\end{proof}

\section{Starburst graphs}\label{sec:starbursts}
\subsection{The lower bound}
Recall that the starburst graph $S_n\up k$ is obtained from $K_k$ by adding $n$ pendant edges to every vertex of $K_k$.
As mentioned in the introduction, although Erd\H os, Faudree, Rousseau, and Schelp~\cite{MR479691} asserted the lower bound $\rhat(S_n\up k) =\Omega(k^3 n^2)$ for sufficiently large $n$, they did not include a proof, so we begin with this lower bound. 
In fact, we record the following more general lower bound, where $\rhat(H_1,H_2)$ denotes the minimum number of edges in a graph $G$ such that every red/blue coloring of $E(G)$ contains a red copy of $H_1$ or a blue copy of $H_2$.

\begin{prop}\label{prop:general-starburst-lb}
	Let $H_1$ be a connected graph with $n+1$ vertices and maximum degree $\Delta$ and let $H_2$ be a graph with chromatic number $k+1$. Then $\rhat(H_1,H_2) \geq \binom k2 \Delta n$.
\end{prop}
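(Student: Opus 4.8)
The plan is to replace the Ramsey statement by a purely extremal one. Given a graph $G$ with $e(G)<\binom k2\Delta n$, pick a partition $V(G)=W_1\sqcup\dots\sqcup W_k$ (to be specified) and colour an edge red if both endpoints lie in the same part and blue otherwise. The blue graph is $k$-partite, hence $k$-colourable, so it contains no copy of $H_2$ because $\chi(H_2)=k+1$; and since $H_1$ is connected, a red copy of $H_1$ would have to lie inside a single $G[W_i]$. Hence it is enough to prove the following claim: \emph{if $e(G)<\binom k2\Delta n$, then $V(G)$ can be partitioned into $k$ parts none of which induces a copy of $H_1$.} For this I would use that a graph $F$ contains no copy of $H_1$ whenever every connected component of $F$ has at most $n$ vertices (too few to host $H_1$) or has maximum degree less than $\Delta$ (no vertex could play a maximum-degree vertex of $H_1$); in particular a part is safe if it spans at most $n$ vertices, or if it induces a graph of maximum degree $<\Delta$.

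\textbf{Building the partition.} The high-degree vertices are few: since $\sum_v\deg(v)=2e(G)<k(k-1)\Delta n$, fewer than $k$ vertices have degree at least $(k-1)\Delta n$, so each such vertex can be placed in its own singleton part; a short computation (using $\Delta n\ge 2$, with the handful of degenerate cases treated separately) shows that deleting the $t$ of them leaves a graph with fewer than $\binom{k-t}{2}\Delta n$ edges and maximum degree below $(k-1)\Delta n$, to be coloured with the remaining $k-t$ parts. On this bounded-degree remainder I would run a defective-colouring argument of Lovász type --- take the $(k-t)$-partition minimising the number of internal edges and note that then every vertex $v$ has at most $\deg_G(v)/(k-t)$ neighbours inside its own part --- but combined with the crucial extra observation that, in the minimiser, a part which contains a large component carrying a vertex of induced degree $\ge\Delta$ forces that component to absorb enough incident degree to violate the edge bound. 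This is exactly what lets one trade "maximum induced degree $<\Delta$'' (which a na\"ive Lovász bound cannot deliver, as $\Delta(G)$ may be far larger than $k\Delta$) for the weaker-but-sufficient property $(\star)$ that every component be small or $\Delta$-sparse.

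\textbf{Main obstacle.} The delicate point is precisely this interaction between the two safety conditions and the extraction of the constant $\binom k2$. A single vertex of degree between roughly $k\Delta$ and $(k-1)\Delta n$ can, in \emph{any} $k$-colouring, end up with $\Delta$ or more neighbours in its own colour class, so one genuinely cannot hope to make all parts $\Delta$-sparse; one is forced into the component-by-component accounting, and that accounting has to be essentially lossless --- each vertex of an offending component carries about a $(1-1/k)$ fraction of its degree out of the component, and it is summing these contributions exactly that produces $\binom k2\Delta n$ rather than the weaker $(k-1)\Delta n$ obtained by the cruder strategy of greedily deleting the hubs of copies of $H_1$ one at a time. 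I expect the bulk of the work to lie in verifying this degree bookkeeping and in checking that the singleton-extraction step interfaces cleanly with the defective colouring of the remainder.
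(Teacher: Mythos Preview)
Your reduction and the two safety criteria (part of size at most $n$, or induced maximum degree less than $\Delta$) are exactly what the paper uses. The gap is in building the partition. Peeling off singletons at the threshold $(k-1)\Delta n$ leaves a graph whose maximum degree can still be just below $(k-1)\Delta n$, so the Lov\'asz max-cut bound only gives internal degree about $(k-1)\Delta n/(k-t)$, nowhere near $\Delta$; you acknowledge this and fall back on component accounting, but that accounting does not close. From a single bad component $C\subseteq W_i$ in a minimising $(k-t)$-cut, local optimality gives $\deg_{G'}(w)\ge(k-t)\deg_{W_i}(w)$ for each $w\in C$, and together with connectedness and $|C|>n$ this yields only $e(G')\ge(k-t)n$; the single vertex of internal degree $\ge\Delta$ contributes merely another $(k-t)\Delta$ to the degree sum. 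Both are far short of $\binom{k-t}{2}\Delta n$, and nothing in the setup manufactures enough disjoint bad components to make up the difference.

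The paper bypasses component analysis entirely via a different first phase. Order the vertices so that $v_i$ has maximum degree $D(i)$ in $G[\{v_i,\dots,v_N\}]$; then $D$ is non-increasing and $\sum_i D(i)=e(G)$. If $D(jn)\ge(k-j)\Delta$ for every $1\le j\le k-1$, summing in blocks of length $n$ gives $e(G)\ge\sum_{j=1}^{k-1}n(k-j)\Delta=\binom{k}{2}\Delta n$, a contradiction; so some least $j^*$ satisfies $D(j^*n)<(k-j^*)\Delta$. Take the first $j^*$ blocks of $n$ consecutive vertices as size-$n$ parts (safe by the first criterion), and observe that the remainder has maximum degree below $(k-j^*)\Delta$, so a max $(k-j^*)$-cut now gives every vertex internal degree strictly below $\Delta$ --- the Lov\'asz bound works on the nose. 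The idea you are missing is to sacrifice \emph{blocks of $n$} high-degree vertices rather than singletons, governed by the sliding threshold $D(jn)<(k-j)\Delta$ instead of your fixed $(k-1)\Delta n$.
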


Since the graph $S_n\up k$ has $kn+k$ vertices, maximum degree $n+k-1$, and chromatic number $k$, we immediately get the following corollary, which clearly implies the lower bound in Theorem~\ref{thm:starbursts} for $k \geq 3$ (see~\cite[Fact A]{MR479691} for the $k=2$ case).

\begin{cor}
	For every $k \geq 2$ and $n \geq 1$, 
	\[
		\rhat(S_n\up k)\geq \binom{k-1}2(n+k-1)(kn+k-1).
	\]
	In particular, $\rhat(S_n\up k) \geq (\frac 12-o(1))k^3 n^2$, where the $o(1)$ term tends to $0$ as $k \to \infty$.
\end{cor}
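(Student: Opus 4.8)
The plan is to prove the general lower bound \cref{prop:general-starburst-lb}, from which the corollary about $\rhat(S_n\up k)$ follows by plugging in the parameters of the starburst graph. So let $G$ be a graph with fewer than $\binom k2 \Delta n$ edges; I want to exhibit a red/blue coloring of $E(G)$ with no red $H_1$ and no blue $H_2$. The colorings I would try are Tur\'an-type colorings: fix a partition $V(G) = W_1 \sqcup \dots \sqcup W_k$ into $k$ parts, color all edges inside each $W_i$ red, and all edges between distinct parts blue. With such a coloring, the blue graph is $k$-partite, hence has chromatic number at most $k < k+1 = \chi(H_2)$, so there is automatically no blue copy of $H_2$. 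Thus the only thing to arrange is that the red graph — which is the disjoint union of the induced subgraphs $G[W_1], \dots, G[W_k]$ — contains no copy of $H_1$.

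Since $H_1$ is connected on $n+1$ vertices with maximum degree $\Delta$, a red copy of $H_1$ would have to lie entirely inside a single part $G[W_i]$, and it would need that part to contain a connected subgraph on $n+1$ vertices in which every vertex has degree at least… well, to forbid $H_1$ it suffices that no $G[W_i]$ contains $H_1$ as a subgraph, and a clean sufficient condition is that each $G[W_i]$ has \emph{no vertex of degree $\geq \Delta$ lying in a connected component of size $\geq n+1$}; even simpler, it suffices that in each $G[W_i]$, every connected subgraph on $n+1$ vertices fails to realize the degree sequence of $H_1$. The cleanest route: it suffices that each part $W_i$ can be chosen so that $G[W_i]$ has at most $\Delta n - 1$ edges \emph{within each of its connected components of order $\geq n+1$}, or more bluntly, so that the number of edges of $G$ that end up colored red is small enough that some part is "too sparse" to contain $H_1$. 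The key counting step is a pigeonhole / probabilistic argument: choose the partition $V(G) = W_1 \sqcup \dots \sqcup W_k$ uniformly at random. Then each edge of $G$ is monochromatic red with probability exactly $1/k$ (both endpoints in the same part — wait, that is $k \cdot (1/k)^2 = 1/k$), so the expected number of red edges is $e(G)/k < \binom k2 \Delta n / k = \frac{k-1}{2}\Delta n$. Summing the red-edge count over the $k$ parts and using that the expected total is less than $\frac{k-1}{2}\Delta n < \frac{k}{2}\Delta n$, by averaging there is a choice of partition in which \emph{some} part $W_i$ has $e(G[W_i]) < \Delta n / 2 \cdot \frac{k-1}{k} \cdot \frac{2}{\text{(number of parts we can afford)}}$ — I need to be a bit more careful here, but the point is that the red graph is globally sparse, hence some component-free-ish sparsity kicks in.

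Let me restructure the heart of the argument more carefully, because the above averaging gives a sparse \emph{part}, not directly a $G$ with no red $H_1$. The clean statement is: if $e(G) < \binom k2 \Delta n$, I claim $G$ admits an equipartition-free (any) partition into $k$ parts with at most one red edge per... no. Here is the right formulation. A graph that contains $H_1$ (connected, $n+1$ vertices, max degree $\Delta$) must contain a connected subgraph on exactly $n+1$ vertices, and any such subgraph has at least $n$ edges; moreover $H_1$ itself has a vertex of degree $\Delta$, so containing $H_1$ forces a connected $(n+1)$-vertex piece together with that high-degree structure. The slick bound the authors are surely using: **partition greedily so that each $G[W_i]$ has no vertex of degree $\geq \Delta$.** Order $V(G)$ and repeatedly, or: observe that if the red graph has a copy of $H_1$, that copy uses $\geq n$ red edges incident to a connected set, and in particular uses a vertex of red-degree $\geq \Delta$. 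So it suffices to find a partition in which every vertex has red-degree $< \Delta$, OR the red component containing any degree-$\geq\Delta$ vertex has fewer than $n+1$ vertices. I would prove: if $e(G) < \binom k2 \Delta n$, then there is a $k$-partition such that for every $i$, $G[W_i]$ contains no copy of $H_1$ — by a deletion/repartition argument. Consider a partition minimizing the number of red edges; if some $G[W_i]$ contained $H_1$, it would contain a vertex $v$ with $\geq \Delta$ red neighbors inside $W_i$ sitting in a red component of size $\geq n+1$; move $v$ to a part where it has fewer neighbors — since $v$ has $<$ (total degree) neighbors spread over $k$ parts, some part other than $W_i$ receives $\leq \deg(v)/k$ of them, and one shows this strictly decreases the count unless $\deg(v)$ is large, which happens for few vertices because $e(G)$ is small.

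I expect the main obstacle to be making this "sparse graph cannot contain a connected, bounded-degree $(n+1)$-vertex subgraph" step fully rigorous with the exact constant $\binom k2 \Delta n$: the crude bound "each part has $< \Delta n$ edges hence a connected $(n+1)$-set would need... " is not quite enough since a graph with $\Delta n$ edges and max degree unbounded can still avoid a max-degree-$\Delta$ connected piece, but also can contain one. The correct extremal fact is that a graph in which every $H_1$-copy is forbidden can still be dense; so the argument must genuinely use a \emph{partition-optimization} (or an explicit partition respecting the degree sequence), not just a global edge count. Once that step is in place, the chromatic-number side is immediate (the blue graph is $k$-partite, $\chi \leq k < \chi(H_2)$), and the corollary is just substitution: $S_n\up k$ has a vertex of degree $n + k - 1$, has $kn + k$ vertices so $n+1$ is replaced by $kn+k$ (i.e. apply the proposition with $H_1 = S_n\up k$, whose "$n$" in the statement is $kn + k - 1$ and whose $\Delta$ is $n + k - 1$), and $\chi(S_n\up k) = k$ (color the $K_k$ with $k$ colors, then the pendant leaves reuse a color), giving $\rhat(S_n\up k) \geq \binom{k-1}{2}(n+k-1)(kn+k-1) = (\tfrac12 - o(1))k^3 n^2$.
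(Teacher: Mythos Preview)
Your overall plan is right and matches the paper: reduce the corollary to \cref{prop:general-starburst-lb} and then substitute the parameters of $S_n\up k$ (connected, $kn+k$ vertices, maximum degree $n+k-1$, chromatic number $k$). The Tur\'an-type coloring---edges inside parts red, between parts blue---is exactly what the paper uses, and your observation that the blue graph is then $k$-partite and cannot contain $H_2$ is correct.

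The genuine gap is in constructing the partition $W_1,\dots,W_k$. Neither of your two attempts closes. The random partition gives an expected $e(G)/k$ red edges, but a part being sparse in edges does not prevent it from containing $H_1$; a graph with far fewer than $\Delta n$ edges can still contain a connected $(n{+}1)$-vertex subgraph of maximum degree $\Delta$. Your second attempt (take a partition minimizing red edges, move a vertex with $\geq \Delta$ in-part neighbors elsewhere) is the max-cut idea, and it does guarantee that every vertex has in-part degree at most $\deg(v)/k$. But that is only $<\Delta$ when $\deg(v)<k\Delta$, and nothing prevents $G$ from having vertices of much higher degree.

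What the paper does is combine these two obstructions: some parts are made \emph{small} (exactly $n$ vertices, so $H_1$ cannot fit), and the rest are made \emph{low in-degree} via a max-cut. Concretely, order the vertices so that $v_i$ has maximum degree in $G[v_i,\dots,v_N]$ and let $D(i)$ be that degree. If $D(jn)\geq (k-j)\Delta$ for all $1\le j\le k-1$, summing gives $e(G)\geq \binom k2 \Delta n$, a contradiction. So there is a least $j^*$ with $D(j^*n)<(k-j^*)\Delta$. Put the first $j^*n$ vertices into $j^*$ parts of size $n$ each; on the remaining graph $G'$, every vertex has degree $<(k-j^*)\Delta$, so a max $(k-j^*)$-cut yields in-part degree $<\Delta$ in each of the last $k-j^*$ parts. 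Now every part is either too small for $H_1$ or has maximum degree below $\Delta(H_1)$, and the red graph is $H_1$-free. This hybrid---use the edge bound to cap how many ``small'' parts you need, then max-cut the low-degree remainder---is the missing idea in your sketch.
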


\begin{proof}[Proof of \cref{prop:general-starburst-lb}]
	Let $G$ be an $N$-vertex graph with fewer than $\binom k2 \Delta n$ edges. By iteratively deleting vertices of maximum degree, we may order the vertices of $G$ as $v_1,\dots,v_N$, where $v_i$ is a maximum-degree vertex in the induced subgraph $G_i = G[v_i,\dots,v_N]$ for every $i$. For $1 \leq i \leq N$, let $D(i)$ be the degree of $v_i$ in $G_i$ and declare $D(i)=0$ if $i>N$. By our choice of ordering, we have that $D(1)\geq D(2)\geq \dotsb$. 

	Suppose for the moment that $D(jn)\geq (k-j)\Delta$ for all $1 \leq j \leq k-1$. Then 
	the number of edges in $G$ is 
	\[
		\sum_{i=1}^N D(i) \geq \sum_{j=1}^{k-1} \sum_{i=(j-1)n+1}^{jn} D(i) \geq \sum_{j=1}^{k-1} n D(jn) \geq \Delta n\sum_{j=1}^{k-1}(k-j) = \binom k2 \Delta n,
	\]
	where the second inequality uses that $D(i) \geq D(jn)$ for all $i \leq jn$. This contradicts our assumption that the number of edges in $G$ is less than $\binom k2 \Delta n$. 
	So we may set $j^* \leq k-1$ to be the smallest positive integer such that $D(j^*n) < (k-j^*)\Delta$.
	We now create a partition of $V(G)$ into $V_1\sqcup \dotsb \sqcup V_k$, as follows.

	For $1 \leq j \leq j^*$, let $V_i = \{v_{(j-1)n+1},\dots,v_{jn}\}$ and let $G' = G \setminus (V_1 \cup \dotsb \cup V_{j^*})$. We consider a max $(k-j^*)$-cut of $G'$, that is, a partition of $V(G')$ into $k-j^*$ parts which maximizes the number of edges between the parts. Let this partition of $G'$ be $V_{j^*+1},\dots,V_k$. Note that $G' = G_{j^*n+1}$, which implies that every vertex in $G'$ has degree at most $D(j^*n) < (k-j^*)\Delta$. Moreover, in the max $(k-j^*)$-cut, every vertex has at least as many neighbors in every other part as in its own part, for otherwise we could increase the number of edges going between parts by moving some vertex to a different part. Thus, for every $j^*+1 \leq j \leq k$, every vertex in $V_j$ has fewer than $(k-j^*)\Delta/(k-j^*)=\Delta$ neighbors in its own part $V_j$.

	In summary, each part $V_j$ for $1 \leq j \leq j^*$ has exactly $n$ vertices and each part $V_j$ for $j^*+1 \leq j \leq k$ induces a subgraph of $G$ with maximum degree less than $\Delta$. We color the edges inside each part red and all edges between parts blue. Then the blue graph is $k$-partite and so cannot contain a copy of $H_2$. On the other hand, the red graph is the disjoint union of $G[V_1],\dots,G[V_k]$, so, since $H_1$ is connected, any red $H_1$ must appear in $G[V_j]$ for some $j$. But it cannot appear in $G[V_j]$ for $j \leq j^*$ since $H_1$ has more than $n$ vertices and it cannot appear in $G[V_j]$ for $j >j^*$ since $H_1$ has maximum degree $\Delta$ and $G[V_j]$ has maximum degree less than $\Delta$. Therefore, $G$ is not Ramsey for $H$.
\end{proof}

\subsection{The upper bound}

To prove the upper bound on $\rhat(S_n\up k)$, we need to recollect some facts about random graphs. We use $G(N,p)$ to denote the Erd\H os--R\'enyi random graph with $N$ vertices where each edge is chosen independently with probability $p$, saying that an event happens \emph{with high probability (w.h.p.)}\ if the probability it holds in $G(N,p)$ tends to $1$ as $N$ tends to $\infty$.

The first result we need is a version of Tur\'an's theorem relative to random graphs. This is a well-studied topic, culminating in works by Conlon--Gowers \cite{MR3548529} and Schacht \cite{MR3548528}, who determined the threshold for Tur\'an's theorem to hold in this setting (see also~\cite{MR3385638,MR3327533} for subsequent proofs using the method of hypergraph containers). However, we only need a rather weak version, so the following earlier result of Szab\'o and Vu~\cite{MR1999036}, with explicit bounds on the error probability, more than suffices for our purposes. For a graph $G$, we use $\ex(G,K_k)$ to denote the maximum number of edges in a $K_k$-free subgraph of $G$.

\begin{thm}[{\cite[Theorem 1.2]{MR1999036}}]\label{thm:random-turan}
	For every $k\geq 4$ and $\varepsilon>0$, there exist constants $C, c >0$ such that the following holds for all sufficiently large $N$. If $p \geq CN^{-2/(2k-3)}$ and $G \sim G(N,p)$, then
	\[
		\pr\left(\ex(G,K_k)\leq \left(1-\frac{1}{k-1}+\varepsilon\right)p\binom N2\right) \geq 1-2^{-c pN^2}.
	\]
\end{thm}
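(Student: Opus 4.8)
\emph{Proof proposal.} This is a theorem of Szab\'o and Vu \cite{MR1999036}, which we use as a black box; the plan for a proof would be as follows. First I would reduce to a \emph{deterministic} property of $G$: a Chernoff bound gives $e(G)=(1\pm\tfrac\varepsilon{10})p\binom N2$ outside an event of probability $2^{-\Omega(pN^2)}$, so it suffices to show that, with probability $1-2^{-\Omega(pN^2)}$, the graph $G$ has the property that every subgraph $H$ with $e(H)\ge\big(1-\tfrac1{k-1}+\tfrac\varepsilon2\big)e(G)$ contains a copy of $K_k$.

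The engine would be a ``local density increment'' internal to $G$: for every suitably pseudorandom $W\subseteq V(G)$ and every $H\subseteq G[W]$ whose density relative to $G[W]$ exceeds $1-\tfrac1j+\delta$ (with $3\le j\le k-1$), one wants a vertex $v\in W$ whose $H$-neighbourhood $W'=N_H(v)\cap W$ is again pseudorandom and carries relative $H$-density exceeding $1-\tfrac1{j-1}+\delta'$. The increment itself should come from supersaturation and convexity — a graph of relative density above $1-\tfrac1j$ spans many small cliques, and averaging over an apex vertex produces a good $v$ — with pseudorandomness of $G$ used to pass between genuine edge counts in $G[W]$ and the ``random density-$\rho$ subgraph'' benchmark. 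Iterating from $W=V(G)$, where the hypothesis supplies relative density at least $1-\tfrac1{k-1}+\tfrac\varepsilon2$, I would peel off vertices $v_1,\dots,v_{k-2}$; their common neighbourhood then still carries positive relative $H$-density, hence spans an $H$-edge $xy$, and $\{v_1,\dots,v_{k-2},x,y\}$ is a $K_k$ in $H$. The threshold $p\ge CN^{-2/(2k-3)}$ is exactly what keeps this alive at the bottom: after $k-2$ peeling steps the common neighbourhood has $\approx Np^{k-2}$ vertices and hence $\approx N^2p^{2k-3}$ edges of $G$, so one needs $N^2p^{2k-3}=\Omega(1)$ just for it to contain an edge, while the failure probability $2^{-\Omega(pN^2)}$ is governed by the coarse-scale concentration for $G(N,p)$, e.g.\ the chance that some pair of linear-sized vertex sets has density far from $p$.

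A second, and slightly stronger, route would go through the hypergraph container method: form the $\binom k2$-uniform hypergraph on $E(K_N)$ whose edges are the copies of $K_k$ (so that $K_k$-free graphs are its independent sets), and apply the container theorem together with the supersaturation form of the Erd\H os--Stone--Simonovits theorem to obtain a family $\mathcal C$ of at most $\exp\!\big(O(N^{2-2/(k+1)}\log N)\big)$ subgraphs of $K_N$, each with fewer than $\big(1-\tfrac1{k-1}+\tfrac\varepsilon2\big)\binom N2$ edges, covering every $K_k$-free graph; a Chernoff bound for each relevant container, followed by a union bound over $\mathcal C$, then finishes things. In either approach, I expect the main obstacle to be not the production of a single $K_k$, which is just supersaturation, but forcing one \emph{uniformly} over all $2^{\Theta(pN^2)}$ candidate subgraphs $H$ at once while keeping the failure probability down to $2^{-\Omega(pN^2)}$ — the tension that the hypothesis on $p$ and the peeling or container machinery are designed to absorb. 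For the purposes of this paper, invoking \cite{MR1999036} is all that is needed.
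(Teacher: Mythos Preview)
The paper does not prove this statement at all: it is quoted verbatim as \cite[Theorem 1.2]{MR1999036} and used as a black box, exactly as you say in your final sentence. So your proposal is correct in the only sense that matters here --- nothing needs to be proved, and citing Szab\'o--Vu suffices.

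Your two sketched routes to an actual proof are both reasonable and go well beyond what the paper does. The first (iterated neighbourhood/density-increment) is in the spirit of the original Szab\'o--Vu argument, and your remark that the threshold $p\ge CN^{-2/(2k-3)}$ is what keeps the common neighbourhood nontrivial after $k-2$ steps is the right heuristic. The container route is a later, more general technology (Balogh--Morris--Samotij, Saxton--Thomason) that would also work and in fact gives the sharper threshold $N^{-2/(k+1)}$; it is anachronistic relative to \cite{MR1999036} but perfectly valid as an alternative proof. Since the paper only needs the cited statement, none of this detail is required.
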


The following lemma records the facts about $G(N,p)$ used in our proof of the upper bound on $\rhat(S_n\up k)$. We use the notation $x = y\pm z$ to mean that $x$ lies in the interval $[y-z,y+z]$. Recall too that $d(X,Y)$ denotes the edge density between vertex sets $X,Y$.

\begin{lem}\label{lem:G(Np)-facts}
	Fix $k \in \N$ and $p,\delta \in (0, 1)$. The following facts all hold w.h.p.\ in $G \sim G(N,p)$:
	\begin{lemenum}
		\item\label{item:degrees} Every vertex $v \in V(G)$ has degree $(p\pm \delta)N$.
		\item\label{item:pair-density} For any two (not necessarily distinct or disjoint) sets $X,Y \subseteq V(G)$ with $\ab X, \ab Y \geq p^{8k}N$, $d(X,Y) = p \pm \delta$.
		\item\label{item:vtx-to-set} For every $S \subseteq V(G)$ with $\ab S \geq p^{8k}N$, the number of $v \in V(G)$ with fewer than $(p - \delta)\ab S$ neighbors in $S$ is at most $p^{8k}N$. Similarly, the number of $v \in V(G)$ with more than $(p+\delta)\ab S$ neighbors in $S$ is at most $p^{8k}N$.
		\item\label{item:turan} For every $S \subseteq V(G)$ with $\ab S \geq N/3$, every $K_k$-free subgraph of $G[S]$ has at most $(1- \frac{1}{2k})p \binom {\ab S}2$ edges.
	\end{lemenum}
\end{lem}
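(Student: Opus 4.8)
The plan is to prove each of the four facts in \cref{lem:G(Np)-facts} via a standard first-moment/Chernoff argument together with a union bound, then invoke \cref{thm:random-turan} for the last part. For \cref{item:degrees}, the degree of any fixed vertex is $\bin(N-1,p)$, which concentrates around $pN$ by the Chernoff bound; since the deviation $\delta N$ is linear in $N$ while the failure probability is exponentially small in $N$, a union bound over the $N$ vertices still gives failure probability $o(1)$. For \cref{item:pair-density}, fix $X,Y$ with $\ab X,\ab Y \geq p^{8k}N$; the number of edges between them is a sum of independent (or, when $X=Y$, negatively-correlated, hence still Chernoff-amenable) indicator variables with mean $p\ab X\ab Y$, so $\pr(\ab{d(X,Y)-p} > \delta)$ is at most $\exp(-c\delta^2 p \ab X \ab Y) \leq \exp(-c\delta^2 p^{16k+1}N^2)$ for an absolute constant $c$. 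Union-bounding over the at most $4^N$ choices of the pair $(X,Y)$ is fine because $4^N = \exp(N\ln 4)$ is dwarfed by $\exp(-\Theta(N^2))$. Part \cref{item:vtx-to-set} follows similarly: fix $S$ with $\ab S \geq p^{8k}N$; for each $v$, its number of neighbors in $S$ is a binomial with mean $p\ab S$, so $\pr(\text{deg into }S < (p-\delta)\ab S) \leq \exp(-c\delta^2 p\ab S)$, which is a constant bounded away from $1$ but not necessarily small — so instead I would bound the \emph{expected} number of bad vertices by $N\exp(-c\delta^2 p^{8k+1}N)$, which is $o(p^{8k}N)$, and apply Markov's inequality to conclude that with probability $1-o(1)$ there are at most $p^{8k}N$ bad vertices for this particular $S$; then union bound over the $\leq 2^N$ choices of $S$, using that $\exp(-c\delta^2 p^{8k+1}N)$ beats $2^N \cdot (\text{poly})$ once one is slightly more careful, or alternatively phrase the whole thing as: the probability that there exists $S$ and $> p^{8k}N$ bad vertices is, by a union bound over $S$ and over the choice of which $p^{8k}N$ vertices are bad, at most $2^N \binom{N}{p^{8k}N}\exp(-c\delta^2 p^{8k+1}N \cdot p^{8k}N)$, and the $N^2$ in the exponent of the last factor wins. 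Finally, \cref{item:turan} is immediate from \cref{thm:random-turan}: apply it with this $k$ and $\varepsilon = \frac{1}{k-1}-\frac{1}{2k} > 0$ (valid for $k \geq 4$; the small cases $k=2,3$ are trivial since then $1-\frac{1}{2k} \geq \frac 12 \cdot (\text{something}) $ and a $K_k$-free graph on $S$ with $p\binom{\ab S}{2}(1-\frac{1}{2k})$ edges would force a triangle by Turán — actually for $k\leq 3$ one can argue directly or simply restrict attention to $k\geq 4$ where the starburst bound is needed), noting that $\ab S \geq N/3$ is itself "sufficiently large", so the conclusion $\ex(G[S],K_k) \leq (1-\frac{1}{k-1}+\varepsilon)p\binom{\ab S}{2} = (1-\frac{1}{2k})p\binom{\ab S}{2}$ holds with probability $1 - 2^{-c p \ab S^2} \geq 1 - 2^{-cpN^2/9}$; union-bounding over the $\leq 2^N$ choices of $S$ again costs only a factor $2^N$.

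The key technical point making all of this work uniformly is the separation of scales: every "bad event" for a fixed set or pair has probability $\exp(-\Theta(N^2))$ (up to constants depending on $p,\delta,k$, which are all fixed), whereas the number of sets/pairs we union over is only $\exp(O(N))$. So the only step requiring genuine care is \cref{item:vtx-to-set}, where a single bad vertex is not unlikely and one must instead control the \emph{count} of bad vertices; I would handle this by the double union bound sketched above (over $S$ and over the identity of the allegedly-bad vertex set), which reintroduces an $\exp(\Theta(N^2))$ gain in the exponent and closes the argument. I expect this \cref{item:vtx-to-set} bookkeeping to be the only place where a reader might stumble, so I would write that estimate out carefully and treat the other three parts more briskly.

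One small subtlety worth flagging explicitly: in \cref{item:pair-density} the sets $X$ and $Y$ are allowed to be equal or to overlap, so the edge count $e(X,Y)$ as defined (pairs in $X\times Y$ that are edges) is not literally a sum of independent indicators when $X\cap Y \neq \varnothing$ — a pair $\{u,v\}$ with $u,v\in X\cap Y$ gets counted twice, and the diagonal contributes zero. This only changes the mean and variance by lower-order terms (the number of such doubled pairs is at most $\ab{X\cap Y}^2 \leq N^2$, but they are still governed by the same underlying independent edge-indicators), so the Chernoff bound still applies to the underlying sum $\sum_{e \in \binom{V}{2}} \mathbf 1[e\in E]\cdot(\text{multiplicity of }e\text{ in }X\times Y)$, a sum of independent bounded random variables with the right mean; I would simply note that $e(X,Y)$ differs from this sum by at most $O(N)$ and absorb that into $\delta$ for $N$ large. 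With these observations in hand the proof is a routine, if slightly lengthy, concentration-plus-union-bound exercise, and no single step presents a real obstacle beyond careful constant-chasing.
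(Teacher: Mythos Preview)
Your approach is correct and follows the same template as the paper's proof: Chernoff plus union bound for (a) and (b), and \cref{thm:random-turan} plus a union bound over $S$ for (d). The one place you diverge is (c), which you flag as ``the only place where a reader might stumble'' and handle via a double union bound over $S$ and over the candidate bad-vertex set. That works, but the paper's treatment is much simpler: (c) is an \emph{immediate} consequence of (b). If for some $S$ with $\ab S \geq p^{8k}N$ the set $X$ of vertices having fewer than $(p-\delta)\ab S$ neighbors in $S$ satisfied $\ab X \geq p^{8k}N$, then by definition $d(X,S) < p-\delta$, contradicting (b) applied to the pair $(X,S)$; the upper-deviation half is identical. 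So (c) requires no new probability estimate at all once (b) is in hand, and your double union bound is effectively re-deriving the Chernoff estimate for the pair $(B,S)$, which is just a special case of (b). Your discussion of the overlap subtlety in (b) is fine; the paper simply cites a reference for that step.
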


\begin{proof}
	It suffices to prove that each property holds w.h.p.,\ since a simple application of the union bound then implies that they all hold w.h.p.\ simultaneously. 
	\begin{enumerate}[label=(\alph*)]
		\item The expected degree of each vertex is $p(N-1)$, so, by the Chernoff bound (e.g., \cite[equation (2.12)]{MR1782847}), for any fixed $v \in V(G)$, 
		\[
			\pr(\deg(v) \neq (p\pm\delta)N) \leq \pr\left(\ab{\deg(v)-\E[\deg(v)]}\geq \frac \delta 2N\right) \leq 2\exp\left(-2 \left(\frac \delta 2\right)^2N\right),
		\]
		where we use that $N$ is sufficiently large in terms of $p$ and $\delta$ to write $p(N-1) = pN \pm \frac \delta 2 N$. We now take a union bound over all $N$ choices of $v$.
		\item \label{item:proof-of-pair-density} This also follows easily from an application of the Chernoff bound and the union bound. For details, see, e.g., \cite[Corollary 2.3]{MR2223394}.
		\item This follows immediately from part \ref{item:proof-of-pair-density}. Indeed, suppose we let $X$ consist of all vertices with fewer than $(p- \delta)\ab S$ neighbors in $S$. Then $d(X,S) < p- \delta$, which contradicts part (b) if $\ab X \geq p^{8k}N$. The second statement follows identically.
		\item For any fixed $S \subseteq V(G)$ with $\ab S \geq N/3$, we can apply \cref{thm:random-turan} with $\varepsilon = \frac{1}{2k}$ to conclude that the probability there is a $K_k$-free subgraph of $G[S]$ with more  than $(1- \frac{1}{2k})p \binom {\ab S}2$ edges is at most $2^{-c p(N/3)^2} = 2^{-\Omega_{p,k}(N^2)}$. Taking a union bound over the fewer than $2^N$ choices for $S$, we see that the required conclusion holds provided $N$ is sufficiently large with respect to $p$ and $k$.\qedhere
	\end{enumerate}
\end{proof}

The following result completes the proof of \cref{thm:starbursts}. Note that we can assume $k \geq 4$, since the required result for $k \in \{2,3\}$ follows from the $k = 4$ case.

\begin{thm}\label{thm:starbursts-ub}
	For every $k \geq 4$ and all sufficiently large $n$, $\rhat(S_n\up k) \leq 10^5 k^3 n^2$.
\end{thm}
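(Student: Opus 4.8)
The plan is to take the host graph to be $G \sim G(N,p)$ for an appropriately chosen $N$ and $p$, and to show that w.h.p.\ $G$ is Ramsey for $S_n\up k$. To match the target bound $10^5 k^3 n^2$, the number of edges $\approx p\binom N2$ should be $\Theta(k^3 n^2)$, so I would set $p$ roughly $1/k$ (this makes the ``random Tur\'an'' obstruction disappear at the right scale) and $N = \Theta(k^2 n)$, so that $pN = \Theta(kn)$, comfortably larger than the maximum degree $n+k-1$ of $S_n\up k$. The precise constants I would tune at the end; the qualitative point is $N \asymp k^2 n$, $p \asymp 1/k$, giving $p\binom N2 \asymp k^3 n^2$. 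I would condition on $G$ satisfying all the w.h.p.\ properties \ref{item:degrees}--\ref{item:turan} of \cref{lem:G(Np)-facts}, with $\delta$ a small multiple of $p$ (say $\delta = p/(10k)$ or so).

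Now fix a red/blue coloring of $E(G)$. The key structural step is to find, among the vertices of $G$, a set of $k$ vertices $w_1,\dots,w_k$ which pairwise span edges of a single color — say red — and such that the induced coloring lets each $w_i$ acquire $n$ pendant edges of the same color red. To get the monochromatic $K_k$: since the red graph or the blue graph on $G$ has at least half the edges, one color class, say red, has $\geq \frac12 p\binom N2$ edges; by property \ref{item:turan} applied with $S = V(G)$, any $K_k$-free subgraph has at most $(1-\frac{1}{2k})p\binom N2 < \frac12 p\binom N2$ edges once $k \geq 2$ — wait, that inequality needs $\frac{1}{2k} > \frac12$, which fails, so I instead argue more carefully. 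The right move is: the majority color, say red, has density $\geq \frac12$ on $V(G)$, but in fact I want a large vertex subset on which red is \emph{dense} in the stronger sense needed. Here is the cleaner approach: repeatedly, as in the book upper bound, look for a vertex $v$ whose red-neighborhood and blue-neighborhood are both ``large'' ($\geq$ some $p^{O(k)}N$ threshold); iterating a Ramsey-type/Tur\'an-type argument on the reduced structure, one either finds a red $K_k$ whose $k$ vertices have $\geq n$ common red-neighbors (a red $S_n\up k$ — note $S_n\up k \subseteq B_n\up k$-like extension argument works since each spine vertex just needs $n$ private red pendant edges, which a large common red-neighborhood supplies by a matching/greedy choice since the red-neighborhood has size $\gg kn$), or the blue graph on a large subset is $K_k$-free, so by Tur\'an it is $(k-1)$-partite-ish, and we push into one part where red is dense and recurse, decreasing $k$. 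Since we only recurse $O(k)$ times and each step costs a factor $p^{O(k)}$ in set size, starting from $N \asymp k^2 n$ with $p \asymp 1/k$ leaves sets of size $\geq k^{-O(k^2)} N$, which is still $\gg kn$ for $n$ large — so the argument terminates with a monochromatic $S_n\up k$.

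Let me restate the core step so the recursion is transparent. Call a vertex subset $W$ \emph{red-rich} if the red graph on $W$ has more than $(1-\frac{1}{k-1})$ of the edges of $G[W]$; by Tur\'an's theorem (the random version, property \ref{item:turan}, rules out blue being $K_k$-free when $W$ is large, but here I want the complementary deduction), a red-rich set of size $\geq N/3$ contains a red $K_k$. More usefully: in any two-coloring of $G[W]$ with $\ab W \geq N/3$, property \ref{item:turan} says the blue graph, if $K_k$-free, has $\leq (1-\frac{1}{2k})p\binom{\ab W}{2}$ edges, hence the red graph has $\geq \frac{1}{2k}p\binom{\ab W}{2}$ edges, so some vertex has red-degree $\geq \frac{1}{2k}p\ab W$ inside $W$; its red-neighborhood $W'$ has size $\geq \frac{1}{2k}p\ab W \geq p^{8k}N$ (for suitable constants), and recursing inside $W'$ with the spine-vertex-count decremented eventually either builds a red $K_k$ with all $k$ vertices having a common red-neighborhood of size $\geq p^{8k}N \gg kn$ — giving a red $S_n\up k$ by greedily assigning $n$ distinct red pendant vertices to each spine vertex from that common neighborhood — or else at some stage the blue graph on a large set is not $K_k$-free, producing a blue $K_k$, and again we use \ref{item:vtx-to-set} to find $\gg kn$ common blue neighbors and build a blue $S_n\up k$.

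\medskip

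\noindent\textbf{Main obstacle.} The delicate part is the bookkeeping of set sizes through the recursion: each descent into a red- or blue-neighborhood multiplies the working set size by roughly $p \asymp 1/k$, and we may need up to $k$ descents, so the final set has size $\gtrsim k^{-k} N$; combined with the $p^{8k}N$ thresholds in \cref{lem:G(Np)-facts} (which are $k^{-8k}N$), the inequalities only close if $N$ is a sufficiently large polynomial-in-$k$ multiple of $n$ — and one must check $k^{-O(k^2)} N \gg kn$, i.e.\ that the constant $10^5$ and the choice $N \asymp k^2 n$ are genuinely compatible. I expect that making the exponents of $k$ line up (rather than the base-$2$ absolute constant, which the problem lets us be generous with) is where the real care is needed, and it may force choosing $p$ not as $1/k$ but as a slightly larger constant times $1/k$, or iterating a ``density increment'' that loses only a constant factor per step rather than a factor $p$. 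If the naive recursion loses too much, the fix is to run the argument the way the book upper bound does — one application of regularity/counting to extract a single good configuration — but here, since we only need pendant edges and not a full extra clique vertex, a direct degree-based argument as sketched should suffice.
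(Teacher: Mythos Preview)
Your setup is right: the host is $G(N,p)$ with $N=\Theta(k^2n)$, $p=\Theta(1/k)$, and you condition on the properties of \cref{lem:G(Np)-facts}. But the argument for extracting a monochromatic $S_n\up k$ has two genuine gaps.

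\textbf{Pendant edges for the off-color clique.} Your dichotomy is: either blue on the current set $W$ contains a $K_k$, or blue is $K_k$-free and you descend into a red neighborhood. In the first branch you write ``use \ref{item:vtx-to-set} to find $\gg kn$ common blue neighbors''. But \cref{item:vtx-to-set} only controls $G$-degrees into a set, not colored degrees. The vertices of your blue $K_k$ could have essentially all of their remaining incident edges red; nothing in the recursion (which has been tracking \emph{red} neighborhoods) guarantees any blue pendant edges at all. This is a real obstruction, not a bookkeeping issue.

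\textbf{The recursion cannot iterate the Tur\'an step.} Property \ref{item:turan} requires $\ab S\ge N/3$. After a single descent your working set has size at most $(p-\delta)\ab W \ll N/3$, so you cannot invoke \ref{item:turan} again. Your case split therefore collapses after one step, and the suggestion of ``recursing $O(k)$ times'' is not supported by the lemma as stated. Strengthening \ref{item:turan} to sets of size $p^{O(k)}N$ would need $p$ above the threshold in \cref{thm:random-turan} for $K_k$ at that scale, which your choice $p\asymp 1/k$ does not satisfy.

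The paper sidesteps both issues with one clean move: partition $V(G)$ into $V_R$ (blue degree $<3n$), $V_B$ (red degree $<3n$), and $V_0$ (both degrees $\ge 3n$). If $\ab{V_R}\ge N/3$, the blue edges inside $V_R$ are so few that red has density $\ge(1-\tfrac{1}{2k})p$ there, so a \emph{single} application of \ref{item:turan} yields a red $K_k$; the pendant edges come for free because every vertex of $V_R$ has red degree $\ge (p-\delta)N-3n\gg kn$. Symmetrically for $V_B$. If $\ab{V_0}\ge N/3$, run Erd\H os--Szekeres for $2k$ steps inside $V_0$, descending into the majority-color neighborhood each time (this uses only \ref{item:vtx-to-set}, never \ref{item:turan}), while at each step reserving a set $Z_i$ of $\tfrac32 n$ red and $\tfrac32 n$ blue neighbors of $v_i$---which exists because $v_i\in V_0$---chosen disjoint from earlier $Z_j$'s via the upper-deviation half of \ref{item:vtx-to-set}. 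After $2k$ steps some color appears $k$ times, giving the monochromatic $K_k$, and the corresponding $Z_i$'s supply the pendant edges in that color. The pre-partition is exactly the missing idea: it decouples ``find the clique'' from ``guarantee pendant edges'', so that Tur\'an is used once on a large set and the neighborhood-chasing never needs it.
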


\begin{proof}
	Fix $p = 1/(10k)$, $\delta=p^2$, and $N=1000k^2n$. Then, for $n$ sufficiently large, $G \sim G(N,p)$ satisfies all the properties in \cref{lem:G(Np)-facts} w.h.p.,\ so we may fix an $N$-vertex graph $G$ with these properties. In particular, by \cref{item:degrees}, $G$ has at most $pN^2 = 10^5 k^3 n^2$ edges, so it suffices to show that $G$ is Ramsey for $S_n\up k$. 

	Suppose, then, that the edges of $G$ have been red/blue colored. Let $V_R$ denote the set of vertices of $G$ whose blue degree is less than $3n$, let $V_B$ denote the set of vertices of $G$ whose red degree is less than $3n$, and let $V_0$ consist of all vertices with red and blue degree at least $3n$. At least one of $V_R, V_B,$ and $V_0$ has at least $N/3$ vertices.

	Suppose first that $\ab{V_R}\geq N/3$. By \cref{item:pair-density}, the number of edges in $V_R$ is at least $(p- \delta)\frac{\ab{V_R}^2}{2}$. Moreover, each vertex in $V_R$ has blue degree at most $3n$, which means that the number of blue edges in $V_R$ is at most
	\[
		\frac 32 n \ab{V_R} = \frac 32 \cdot \frac{N}{1000k^2}\ab{V_R} \leq \frac{9}{2000k^2}\ab{V_R}^2 \leq \delta \frac{\ab{V_R}^2}{2}.
	\]
	Therefore, the red graph on $V_R$ has at least $(p-2 \delta)\frac{\ab{V_R}^2}2\geq (1-2p)p\binom{\ab{V_R}}2$ edges. Since $2p = \frac{1}{5k}<\frac{1}{2k}$, \cref{item:turan} implies that $V_R$ contains a red $K_k$. Let the vertices of this red $K_k$ be $v_1,\dots,v_k$. Each $v_i$ has at least $(p -\delta)N$ incident edges, of which at most $3n$ are colored blue. Let $X_i$ be the set of red neighbors of $v_i$, apart from $v_j$ for $j \neq i$, so that
	\[
		\ab{X_i} \geq (p- \delta)N - 3n - k \geq \frac p2 N = 50kn.
	\]
	Now, we let $Y_1$ be an arbitrary subset of $X_1$ with $\ab{Y_1}=n$. Inductively, we let $Y_i$ be an arbitrary subset of $X_i \setminus(Y_1\cup \dotsb \cup Y_{i-1})$ of order $n$, which we can do since $\ab{X_i \setminus(Y_1 \cup \dotsb \cup Y_{i-1})}\geq 50kn - (i-1)n\geq n$. By doing this for every $i$, we find disjoint sets $Y_1,\dots,Y_k$ of red neighbors of $v_1,\dots,v_k$, respectively, which yields a red $S_n\up k$. By interchanging the roles of the colors, the same argument yields a blue $S_n\up k$ if $\ab{V_B}\geq N/3$.

	So we may now suppose that $\ab{V_0}\geq N/3$. Our goal is to apply the Erd\H os--Szekeres neighborhood-chasing argument to build a monochromatic $K_k$ inside $V_0$, while also maintaining large sets of red and blue neighbors for each vertex of this $K_k$, so that we can complete it to a monochromatic copy of $S_n\up k$. Formally, we inductively construct three sequences: a sequence $v_1,\dots,v_{2k}$ of vertices in $V_0$, a sequence $C_1,\dots,C_{2k} \in \{R,B\}$ of colors, and a sequence $Z_1,\dots,Z_{2k}$ of disjoint vertex sets, with the property that each $Z_i$ consists of $\frac 32 n$ red and $\frac 32 n$ blue neighbors of $v_i$. Additionally, we consider the sequence $N_1 \supseteq N_2 \supseteq \dots$, where $N_i = V_0\cap  N_{C_1}(v_1) \cap N_{C_2}(v_2)\cap \dotsb\cap N_{C_{i}}(v_i)$ consists of the common neighborhoods in $V_0$ of the $i$ vertices $v_1, \dots, v_i$ in the colors $C_1,\dots,C_i$. We inductively maintain the properties that $\ab{N_i}\geq p^{2i}\ab{V_0}$ and $v_{i+1} \in N_i$ for all $1 \leq i <2k$.

	To begin, by \cref{item:vtx-to-set}, at most $p^{8k}N<\ab{V_0}$ vertices in $V_0$ have fewer than $(p- \delta)\ab{V_0}$ neighbors in $V_0$, so let $v_1\in V_0$ be an arbitrary vertex with at least $(p- \delta)\ab{V_0}$ neighbors in $V_0$. Among the edges between $v_1$ and $V_0$, let $C_1$ be the majority color. Finally, since $v_1 \in V_0$, it has at least $3n$ red neighbors and at least $3n$ blue neighbors, so we let $Z_1$ be an arbitrary set of $\frac 32 n$ red and $\frac 32 n$ blue neighbors of $v_1$. As indicated above, let $N_1=V_0\cap N_{C_1}(v_1)$ be the set of neighbors of $v_1$ in $V_0$ in the color $C_1$. By construction,
	\[
		\ab{N_1}\geq \frac{p- \delta}{2} \ab{V_0} \geq \frac p4 \ab{V_0} \geq p^2 \ab{V_0},
	\]
	since $v_1$ has at least $(p-\delta)\ab{V_0}$ neighbors in $V_0$, of which at least half have color $C_1$. This sets up the base case of our induction.

	Inductively, suppose that for some $i<2k$ we have defined $v_1,\dots,v_i,C_1,\dots,C_i$, and $Z_1,\dots,Z_i$ and we wish to construct $v_{i+1},C_{i+1}$, and $Z_{i+1}$. As above, let $N_i = V_0 \cap N_{C_1}(v_1)\cap \dotsb \cap N_{C_i}(v_i)$, where the induction hypothesis and $i<2k$ implies that $\ab{N_i}\geq p^{2i}\ab{V_0} \geq p^{4k}N$. By \cref{item:vtx-to-set}, the number of vertices with fewer than $(p- \delta)\ab{N_i}$ neighbors in $N_i$ is at most $p^{8k}N$. Similarly, if we let $Z = Z_1\cup \dotsb \cup Z_i$, then $\ab Z = 3in\geq p^{8k}N$, so the number of vertices with more than $(p+\delta)\ab Z$ neighbors in $Z$ is at most $p^{8k}N$. Since $2p^{8k}N <p^{4k}N\leq \ab{N_i}$, we may pick some $v_{i+1}$ in $N_i$ with at least $(p- \delta)\ab{N_i}$ neighbors in $N_i$ and at most $(p+\delta)\ab Z$ neighbors in $Z$. Let $C_{i+1}$ be the majority color among all edges between $v_{i+1}$ and $N_i$, so that
	\[
		\ab{N_{i+1}} = \ab{N_i \cap N_{C_{i+1}}(v_{i+1})} \geq \frac{p- \delta}{2}\ab{N_i} \geq p^2 \ab{N_i} \geq p^{2(i+1)}\ab{V_0},
	\]
	maintaining the inductive hypothesis. Finally, to define $Z_{i+1}$, we recall that the number of neighbors of $v_{i+1}$ in $Z$ is at most
	\[
		(p+\delta)\ab Z \leq 2p\ab Z = \frac{6in}{10k}< \frac{12kn}{10k}<\frac 32 n.
	\]
	Since $v_{i+1} \in V_0$, it has at least $3n$ blue neighbors and at least $3n$ red neighbors. Hence, avoiding the neighbors of $v_{i+1}$ that are in $Z$, we can still find a set $Z_{i+1}$ of $\frac 32 n$ red and $\frac 32 n$ blue neighbors of $v_{i+1}$ which is disjoint from $Z_1,\dots,Z_i$.

	To conclude, we assume, without loss of generality, that at least half of $C_1,\dots,C_{2k}$ are red. That is, there are distinct $j_1,\dots,j_k \in [2k]$ such that $C_{j_1},\dots,C_{j_k}$ are all red. Then $v_{j_1},\dots,v_{j_k}$ form a red $K_k$, since, for any $j > i$, $v_j \in N_{j-1} \subseteq N_{C_i}(v_i)$. Moreover, the sets $Z_{j_1},\dots,Z_{j_k}$ are disjoint and each contains $\frac 32 n$ red neighbors of $v_{j_1},\dots,v_{j_k}$, respectively. By deleting at most $k<\frac n2$ vertices from each $Z_{j_i}$, we can ensure that each $Z_{j_i}$ is also disjoint from $\{v_{j_1},\dots,v_{j_k}\}$, while still containing at least $n$ red neighbors of $v_{j_i}$ for each $i$. This yields a red copy of $S_n\up k$, as desired.
\end{proof}

Observe that \cref{thm:starbursts}, which states that $\rhat(S_n\up k) = \Theta(k^3 n^2)$ for $n$ sufficiently large, actually requires that $n$ is at least exponential in $k$. This is because
\[
	\rhat(S_n\up k) \geq \rhat(K_k) = \binom{r(K_k)}2 \geq 2^k.
\]
A careful analysis of the proof of \cref{thm:starbursts-ub} shows that the upper bound $\rhat(S_n\up k) = O(k^3 n^2)$ holds when $n \geq k^{Ck}$ for an appropriate constant $C$. This particular proof, using an Erd\H os--R\'enyi random graph, cannot give a better bound on $n$. However, a different random graph model can be used to show that $\rhat(S_n\up k)=O(k^3 n^2)$ holds already when $n \geq 2^{Ck}$. Indeed, consider the random graph model $G(N,p,\omega)$ obtained by iteratively picking a uniformly random subset of $[N]$ of order $\omega$, adding a complete subgraph on those vertices, and repeating this process until the edge density is at least $p$. The global structure of $G(N,p,\omega)$ is rather close to that of $G(N,p)$ for an appropriate range of parameters, but the two models differ because $G(N,p,\omega)$ has larger cliques than $G(N,p)$. In order to prove the desired result, we again take $N = \Theta(k^2 n)$ and $p = \Theta(1/k)$ and let $\omega =N^{1/8}$. One can then follow the same proof technique as in \cref{thm:starbursts-ub}, partitioning the vertex set of an edge-colored $G(N,p,\omega)$ into $V_R, V_B,V_0$ as before. The main difference from the proof of \cref{thm:starbursts-ub} is that one must apply Tur\'an's theorem and the Erd\H os--Szekeres argument inside the intersection of one of these sets with an appropriately chosen clique of order $\omega$.

\section{Concluding remarks}

Although we have made substantial progress on the three questions asked by Erd\H os, Faudree, Rousseau, and Schelp, several interesting open problems remain. First, for complete bipartite graphs $K_{s,t}$, though we have shown that $\rhat(K_{s,t}) = \Theta(s^2 t 2^s)$ for $t = \Omega(s\log s)$, we suspect that a similar bound  may hold whenever $s \leq t$. That is, we have the following conjecture.

\begin{conj}\label{conj:kst}
	For all $s \leq t$, $\rhat(K_{s,t}) = \Theta(s^2 t 2^s)$. In particular, $\rhat(K_{t,t}) = \Theta(t^3 2^t)$.
\end{conj}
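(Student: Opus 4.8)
Since \cref{prop:kst-ub} already gives $\rhat(K_{s,t}) = O(s^2 t 2^s)$ for every $s \le t$, the conjecture is equivalent to improving the lower bound exponent in \cref{thm:kst-lb} from $2 - \tfrac st$ up to $2$; concretely, in the range $t = O(s\log s)$ one must recover the missing factor of $s^{s/t}$, which for $s = t$ is a full factor of $s$. The plan is to keep the first-moment strategy --- exhibit a random two-coloring of an arbitrary $q$-edge graph $G$ and show the expected number of monochromatic $K_{s,t}$ is $o(1)$ once $q = o(s^2 t 2^s)$ --- but to replace the dyadic hypergeometric coloring, which the analysis behind \cref{thm:kst-lb} shows to be genuinely too weak in this regime.

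The obstruction is sharply localized. In that proof the balance between the two error terms $\bigl(\tfrac{t-s}{st}\bigr)^{t-s}$, coming from copies whose short side sits inside the hypergeometrically colored part $A$, and $\bigl(\tfrac{s}{|A|}\bigr)^{t-s}$, coming from copies whose short side reaches the uniformly colored remainder, forces $|A| \asymp s^2 t/(t-s)$, which is as large as $s^3$ when $t$ is close to $s$. But by \cref{lem:lagrange} a hypergeometric coloring suppresses the probability that a fixed vertex is monochromatic-red to a fixed $s$-set drawn from a pool of size $\approx|A|$ only by a factor $\exp(-\Theta(s^2/|A|))$, which is then a mere constant --- and no refinement of the partition of $A$ helps, since against an adversarially chosen $s$-set the per-vertex gain is always $\exp\bigl(-\Theta(\sum_b s_b^2/|b|)\bigr)$ with $\sum_b s_b = s$, and spreading the $s$-set evenly drives this down to $\exp(-\Theta(s^2/|A|))$ however $A$ is cut up. Thus any further gain must come from correlations \emph{between} the colors seen by different vertices of the long side, not within a single vertex's neighborhood. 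The first thing I would look for is therefore a joint law on the coloring of the bipartite graph between the high-degree part and the low-degree part under which, for every fixed $s$-set $S$ on the small side, the \emph{count} of long-side vertices monochromatic-red to $S$ is concentrated well below its mean $|B|2^{-s}$; fed into the same bookkeeping --- the per-index bound $N_{i,x}\le\binom{i-1}{s-1}\binom{i-s}{y}\binom{d_i}{x}$, the optimization over $i$, the sum over $i$ --- such negative association across vertices would shrink the factor $\bigl(\tfrac{t-s}{st}\bigr)^{t-s}$ and simultaneously let one take $|A|$ smaller.

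A second, more structural route follows the template of \cref{thm:books}: prove a stability statement to the effect that any $G$ with $O(s^2 t 2^s)$ edges that is Ramsey for $K_{s,t}$ must resemble the extremal host $K_{s^2,\,t2^s}$ --- it must contain $\Omega(t2^s)$ vertices of degree $\Omega(s^2)$ whose neighborhoods pairwise overlap in a common core of size $\Omega(s^2)$ --- and then show directly that such a configuration already forces $\Omega(s^2 t 2^s)$ edges, by checking that a hypergeometric coloring of the edges incident to the core defeats every copy of $K_{s,t}$ supported there unless the core has size $(1-o(1))s^2$ and there are $(1-o(1))t2^s$ high-degree vertices. In either approach the obstacle is the same, and it is exactly the step I expect to resist a short argument: the per-vertex hypergeometric coloring is already locally optimal, so the extra factor of $s^{s/t}$ can only come from coordinating colors across many vertices --- in the probabilistic route, designing such a correlated coloring while still controlling the first moment; in the structural route, ruling out every host that is not bipartite-like. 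The case $s = t$, where one must climb from $t^2 2^t$ to $t^3 2^t$, is the cleanest test of whichever ingredient ultimately works, and already seems to require its full strength.
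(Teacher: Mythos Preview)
This statement is labeled as a \emph{conjecture} in the paper, not a theorem; the paper offers no proof, and in fact lists it explicitly as an open problem in the concluding remarks. So there is no paper proof to compare against.

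Your proposal is not a proof either, and you are transparent about this: it is a research outline that correctly identifies (i) that only the lower bound is at issue, (ii) the precise missing factor $s^{s/t}$, and (iii) why the dyadic hypergeometric coloring of \cref{thm:kst-lb} cannot be pushed further by per-vertex refinements alone --- your argument that spreading an $s$-set across blocks of total size $|A|$ always drives the per-vertex suppression down to $\exp(-\Theta(s^2/|A|))$ is essentially the content of \cref{lem:lagrange} read in reverse, and it does pinpoint the genuine obstruction. Both routes you sketch (introducing negative correlation across long-side vertices, or a stability/structural argument forcing the host toward $K_{s^2,\,t2^s}$) are plausible directions, but neither is carried out, and you yourself flag the key step in each as ``the step I expect to resist a short argument.'' That is an honest assessment; it also means what you have written is a problem analysis, not a proof attempt with a gap to name. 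The conjecture remains open.
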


Secondly, for book graphs $B_n\up k$, where we determined $\rhat(B_n\up k)$ up to a constant factor once $n$ is sufficiently large in terms of $k$, 
it would be interesting to understand how large is sufficient. As discussed near the end of \cref{subsec:books-lb}, our lower bound can be made to work once $n \geq Ck \log k$, but the upper bound 
relies on an application of Szemer\'edi's regularity lemma, resulting in tower-type bounds. In our earlier paper~\cite{CFW}, we were able to avoid applying the regularity lemma when looking at the ordinary Ramsey number of books and it is possible that a similar technique could yield the upper bound $\rhat(B_n\up k) = O(k 2^k n^2)$ for $n$ bounded by a constant-height tower function of $k$. However, it would be interesting to determine whether the same bound holds for $n$ bounded by a single-exponential function of $k$, say.

Thirdly, our determination of the asymptotic order of $\rhat(S_n\up k)$ for starburst graphs $S_n\up k$ raises the question of when the general lower bound $\rhat(H) = \Omega(\chi(H)^2 \Delta(H) \ab{V(H)})$ in \cref{prop:general-starburst-lb}, which holds for any connected graph $H$, is asymptotically tight.

\begin{qu}\label{qu:size-goodness}
	For which families of graphs $H$ does one have $\rhat(H) = \Theta(\chi(H)^2 \Delta(H) \ab{V(H)})$?
\end{qu}

The reason this question may be interesting is that the proof of \cref{prop:general-starburst-lb} uses a version of the Tur\'an coloring. Indeed, we find an appropriate partition of the host graph $G$ into $\chi(H)-1$ parts and then color all internal edges red and all edges between parts blue. The number of parts guarantees that there is no blue copy of $H$, while the choice of partition ensures that no copy of $H$ 
is contained in any of the red parts. 
One can therefore view \cref{qu:size-goodness} as a size Ramsey variant of the so-called Ramsey goodness problem (see, e.g., \cite[Section 2.5]{MR3497267}), which asks when the Tur\'an coloring is extremal for ordinary Ramsey numbers. While it is not completely clear that \cref{qu:size-goodness} is the correct size Ramsey analogue of this problem, it may be a good first step.

It is also natural to consider multicolor size Ramsey numbers.
Formally, given a graph $H$ and an integer $q\geq 2$, let $\rhat(H;q)$ denote the minimum number of edges in a graph $G$ with the property that no matter how the edges of $G$ are colored in $q$ colors, there is always a monochromatic copy of $H$. Most of the results and proofs in this paper can be adapted to deal with the multicolor case. For example, one can show that if $q \geq 2$ is fixed and $t=\Omega(s\log s)$, then 
\[
	\rhat(K_{s,t};q) = \Theta(s^2 t q^s),
\]
with the upper bound again following from a simple averaging argument in an unbalanced complete bipartite graph and the lower bound from a hypergeometric random coloring. Note that here and below all implicit constants may depend on the number of colors $q$.

For starburst graphs, one can show that if $q\geq 3$ is fixed and $n$ is sufficiently large in terms of $k$, then
\[
	\rhat(S_n\up k;q) = \Theta\left(k\cdot n^2\cdot r(K_k;q-1)^2\right),
\]
where $r(K_k;q-1)$ is the $(q-1)$-color (ordinary) Ramsey number of $K_k$. Although it may not look like it at first, this is a natural generalization of the two-color case, since the one-color Ramsey number of $K_k$ is simply $k$. Here, the lower bound is proved analogously to \cref{prop:general-starburst-lb}, except that rather than the Tur\'an coloring, one uses a blowup of a coloring on $r(K_k;q-1)-1$ vertices with no monochromatic $K_k$. For the upper bound, one again uses $G \sim G(N,p)$, but with $p = \Theta(1/k)$ and $N = \Theta(k \cdot n\cdot r(K_k;q-1))$. We partition the vertices of $G$ according to which of the $q$ color classes they have degree at least $3n$ in and then pass to the largest of these $2^q$ sets. If every vertex in this set has at least $3n$ neighbors in all colors, we can again use the Erd\H os--Szekeres argument to build a monochromatic $S_n\up k$. If not, then there is at least one sparse color in this large set. One applies Szemer\'edi's regularity lemma inside this set, followed by Tur\'an's theorem and Ramsey's theorem on the reduced graph to find a non-sparse color and $k$ parts which are pairwise dense and regular in this color. Finally, one can greedily pick out one vertex from each part, inductively maintaining the ability to extend this to a monochromatic copy of $S_n\up k$.

However, it is not clear how to extend our results on book graphs to more than two colors. A natural conjecture is the following. 

\begin{conj}\label{conj:multicolor-books}
Fix $q \geq 3$. For every $k\geq 2$ and all sufficiently large $n$,
\[
	\rhat(B_n\up k;q) = \Theta\left(q^k\cdot n^2\cdot r(K_k;q-1)\right).
\]
\end{conj}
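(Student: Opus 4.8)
The plan is to treat the upper and lower bounds separately, paralleling the two halves of the proof of \cref{thm:books}; note that for $q=2$ one has $r(K_k;q-1)=r(K_k;1)=k$, so the conjectured formula specializes exactly to \cref{thm:books}. Throughout, write $R=r(K_k;q-1)$ and fix once and for all a $(q-1)$-coloring $\phi$ of $E(K_{R-1})$, in the colors $\{1,\dots,q-1\}$, with no monochromatic $K_k$. This coloring plays, for the multicolor problem, the role that the Tur\'an coloring plays in the two-color arguments above, exactly as in the multicolor starburst bound sketched after \cref{thm:starbursts-ub}.

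For the lower bound I would generalize the argument of \cref{thm:books-lb}. Let $G$ have at most $m=c\,q^k R n^2$ edges for a small constant $c$, and order its vertices by decreasing degree. Take the top $\Theta(Rn)$ vertices and split them into blocks $V_1,\dots,V_{R-1}$ of size $\beta n$ for a small constant $\beta<1$; as in \cref{thm:books-lb}, every vertex of $V_i$ then has degree $O(q^k R n/i)$. On $\bigcup_i V_i$ use a blow-up of $\phi$: color every edge inside a block $V_i$ with the extra color $q$, and every edge between $V_i$ and $V_j$ with the color $\phi(ij)$. On the remaining low-degree set $U$, color each edge uniformly among the $q$ colors, and color each edge between $V_i$ and $U$ with color $q$ with a degree-dependent probability $p_i$ --- a $q$-color analog of the choice $p_i=\tfrac12(i/s)^{1/k}$, now with $p_i$ ranging only up to about $1/q$ --- and otherwise uniformly among the remaining colors. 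The point of the blow-up of $\phi$ is exactly that of the Tur\'an coloring in \cref{thm:books-lb}: a color-$q$ book is confined to one block together with $U$ (and $|V_i|<n$ prevents a large color-$q$ book from living inside a block), while for $c\neq q$ the spine of a color-$c$ book meets each block at most once and meets the union of the blocks only in a color-$c$ clique of $\phi$, hence in at most $k-1$ vertices; moreover a page of a color-$c$ book that lies in a block $V_j$ must be $\phi$-joined in color $c$ to every block-vertex of the spine, so if the spine already has $k-1$ block-vertices there are no such pages at all. Arguing as in \cref{thm:books-lb}, one then bounds $\E[\ext(Q)]$ by $n/2$ for every potential spine $Q$ and every color, and concludes with \cref{lem:chernoff} and a union bound over the $\binom{N}{k}$ choices of $Q$. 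The only genuinely delicate point is to choose $\beta$ and the probabilities $p_i$ so that all of these extension estimates go through simultaneously, the mechanism being that the exponential decay coming from the many random edges of $Q\cup\{v\}$ outweighs the at-most-exponential number of admissible pages.

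For the upper bound the natural host graph is again a large book $G=B_N\up K$, now with $N=\Theta(q^k n)$ pages (the analog of the factor $2^{-k-1}$ in \cref{lem:good-config-suffices}, so that a monochromatic spine-clique of a fixed color still has at least $n$ pages of that color) and with a spine of order $K=\Theta(Rn)$. To show that $G$ is Ramsey for $B_n\up k$, one would fix a $q$-coloring of $E(G)$, apply a multicolored (or iterated) version of \cref{lem:regularity} to the spine, and pass to the reduced multigraph $F$ on the parts. One then wants to argue that $F$ forces either a monochromatic book directly --- via a $q$-color analog of the small-reduced-degree case in the proof of the upper bound in \cref{thm:books}, built on \cref{lem:randomclique} --- or else $k$ parts forming a good configuration for some color, that is, pairwise $\varepsilon$-regular and dense in a single color $c$ and each internally $\varepsilon$-regular and dense in a single other color $c'$. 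Here the point of taking $K=\Theta(Rn)$, hence $\Theta(R)$ parts, is that it lets one invoke the defining property of $\phi$ --- Ramsey's theorem for $q-1$ colors, applied inside the reduced graph, together with a direct-book case to handle the remaining structure --- to locate such a configuration; a $q$-color version of \cref{lem:good-config-suffices} would then convert it, using the $\Theta(q^k n)$ pages, into a monochromatic $B_n\up k$.

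The main obstacle is this last step. No $q$-color analog of \cref{lem:good-config-suffices} is presently known, and even granting one there is a quantitative tension: a book with a complete spine of $\Theta(Rn)$ vertices already has $\binom K2=\Theta(R^2n^2)$ edges, which is $O(q^k R n^2)$ only when $r(K_k;q-1)=O(q^k)$ --- and already for $q=3$ this is the open problem $r(K_k)=O(3^k)$. Circumventing it would require a host whose spine is much sparser than a clique but is nonetheless forced, after regularity, to contain a good configuration, essentially a host whose spine is close to Ramsey-extremal for $K_k$ in $q-1$ colors; this lack of a usable multicolor structure theorem for book-Ramsey colorings is exactly the difficulty alluded to above. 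The lower bound, by contrast, looks to be within reach of the techniques developed above, the real work there being the parameter bookkeeping described in the previous paragraph.
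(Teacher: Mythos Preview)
This statement is a \emph{conjecture}: the paper gives no proof, and says immediately afterward that the authors ``can prove neither the upper nor the lower bound.'' Your proposal is therefore a research sketch, and on the upper bound you correctly identify the same obstacle the paper implicitly has in mind---the natural host $B_N^{(K)}$ with spine of order $\Theta(Rn)$ already has $\Theta(R^2n^2)$ spine edges, which fits under $O(q^kRn^2)$ only if $r(K_k;q-1)=O(q^k)$, itself a well-known open problem already for $q=3$.

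Where you go too far is in calling the lower bound ``within reach'' and the remaining work ``parameter bookkeeping.'' There is a structural obstruction, not just a calibration issue, and this is presumably why the paper also lists the lower bound as open. In \cref{thm:books-lb} the blue-book estimate relies on \cref{lem:beta-integral} to show $\prod_{i=0}^s(2-2p_i)=O(1)$; this works because there are only $s=\Theta(k)$ blocks and the $p_i=\tfrac12(i/s)^{1/k}$ are mostly near $\tfrac12$. In your $q$-color scheme the color-$q$ constraint $p_i^kD_i=O(n)$, with $D_i=\Theta(q^kRn/i)$, forces $p_i=O((i/R)^{1/k}/q)$; since $R=r(K_k;q-1)$ is exponential in $k$ for $q\ge3$, the factor $(i/R)^{1/k}$ is bounded away from $1$ for every $i\le k-1$, so each term $q(1-p_i)/(q-1)$ exceeds $1+\Omega_q(1)$. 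A color-$c$ spine can meet up to $k-1$ blocks, and its expected number of color-$c$ extensions into $U$ then carries the product of $k-1$ such terms, which is $e^{\Omega_q(k)}$---exponentially large in $k$. No evident choice of $p_i$ simultaneously keeps color-$q$ extensions small (forcing $p_i$ small for small $i$) and keeps this product bounded (forcing $p_i$ near $1/q$ for the first $k-1$ indices); cutting the number of blocks back to $\Theta(k)$ would restore the analytic balance but would forfeit the Ramsey coloring $\phi$, which requires $R-1$ parts. So the lower bound, like the upper, appears to need a genuinely new idea.
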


We feel that this is a natural conjecture, because, in the two-color case, both the lower and upper bounds are fundamentally determined by a ``Tur\'an part'' of order $\Theta(kn)$ and a ``random part'' of order $\Theta(2^k n)$ and the natural multicolor analogues of these would have orders $\Theta(r(K_k;q-1)n)$ and $\Theta(q^k n)$, respectively. However, we are not confident of \cref{conj:multicolor-books} even in the case $q=3$ and can prove neither the upper nor the lower bound.

\paragraph{Acknowledgments} We are grateful to Mehtaab Sawhney for suggesting the simplified proof of \cref{lem:lagrange} included here. We would also like to thank the anonymous referees for several helpful comments that improved the presentation of the paper. 


\end{document}